\newtheorem{theorem}{Theorem} 
\newtheorem{lemma}{Lemma}
\newcommand{\E}{\mathbb{E}} 
\newcommand{\Prob}{\mathbb{P}}
\newcommand*{\textlabel}[2]{%
  \edef\@currentlabel{#1}
  \phantomsection
  #1\label{#2}
}
\title{Non-parametric multiple change-point detection}
\author{Andreas Anastasiou \\
	Department of Mathematics and Statistics \\ 
    University of Cyprus\\
	\texttt{anastasiou.andreas@ucy.ac.cy} \\
	\And
	Piotr Fryzlewicz \\
	Department of Statistics\\
    London School of Economics\\
	\texttt{p.fryzlewicz@lse.ac.uk} \\
}
\date{}
\begin{document}
\maketitle

\begin{abstract}
	We introduce a methodology, labelled Non-Parametric Isolate-Detect (NPID), for the consistent estimation of the number and locations of multiple change-points in a non-parametric setting. The method can handle general distributional changes and is based on an isolation technique preventing the consideration of intervals that contain more than one change-point, which enhances the estimation accuracy. As stopping rules, we propose both thresholding and the optimization of an information criterion. In the scenarios tested, which cover a broad range of change types, NPID outperforms the state of the art. An \textsf{R} implementation is provided.
\end{abstract}

\keywords{Non-parametric statistics; segmentation; threshold criterion; information criterion}

\section{Introduction}
\label{sec:intro}

The focus of this work is on non-parametric, offline, multiple change-point detection, the aim of which is to test whether a data sequence is distributionally homogeneous, and if not, to estimate the number and locations of changes. The problem has seen a recent interest in a range of application areas; a non-exhaustive list includes social networks \citep{soc_net}, electrocardiography \citep{ECG} and hydrology \citep{Hyd}.

Denote by $N$ the number of change-points and by $r_1, r_2, \ldots, r_N$ their locations, with $r_0 = 0$, $r_{N+1} = T$. We work in the model
\begin{equation}
\label{our_model}
X_t \sim F_k, \quad r_{k-1} + 1 \leq t \leq r_{k}, \quad k = 1, \ldots, N + 1, \quad t=1,\ldots,T,
\end{equation}
where $\left\lbrace X_t \right\rbrace_{t=1,\ldots,T}$ is the observed univariate data sequence of serially independent observations and $F_k$ is the distribution of the $X_t$'s between the change-points $r_{k-1}$ and $r_k$, with $F_k \neq F_{k+1}\,\,\, \forall\, k \in \left\lbrace 1,\ldots,N\right\rbrace$. The parameters $N$ and $r_k$, as well as the distributions $F_k$, are unknown. Our methodology can in principle be extended to variables with values in an arbitrary metric space, as long as the distributional difference is detectable on a Vapnik-Cervonenkis (VC) class. We address this briefly in Section \ref{sec:conclusion}. As the choice of such a VC class in general metric spaces (a necessity for the computation of the change-point location estimator) is a difficult practical problem in itself, our literature review, which follows, does not make a distinction between non-parametric methods specifically designed for univariate data and those applicable to more general spaces.

Much of the early literature on non-parametric change-point detection covers the case of a single change-point. Some authors \citep{CH1988, Pettitt} test whether there is a change-point at an unknown time point, while others \citep{Darkhovskh1976, Carlstein1988, Dumbgen1991} assume that there exists a change-point and focus on its estimation and on the construction of confidence regions. In \cite{Carlstein1988} and \cite{Dumbgen1991}, mean-dominant norms, as defined in \cite{Carlstein1988}, are employed as a measure of the difference of the empirical cumulative distribution functions before and after a change-point candidate.

In multiple non-parametric change-point detection, observing a connection between multiple change-points and goodness-of-fit tests, \cite{Zouetal2014} propose a non-parametric maximum likelihood approach using empirical process techniques. The number of change-points, $N$, is estimated via an information criterion; given an estimate of $N$, a non-parametric profile likelihood-based algorithm is used to recursively compute the maximizer of an objective function. In an attempt to improve on the computational cost of that method, \cite{Haynes_Fearnhead2014} use the Pruned Exact Linear Time (PELT) method introduced in \cite{Killick_PELT} in order to find the optimal segmentation. \cite{Matteson_James_2014} propose a non-parametric approach based on Euclidean distances between sample observations, which combines binary segmentation with a divergence measure from \cite{Szekely_Rizzo2005}; however, this approach departs from classical non-parametric change-point detection in the sense that it is not invariant with respect to monotone transformations of data (or in other words: does not use the ranks of the data only). Using the Kolmogorov-Smirnov (KS) statistic, \cite{Padilla_et_al_2019} present two consistent procedures for univariate change point localization; one based on the standard binary segmentation algorithm of \cite{Vostrikova} and the other one on the WBS methodology of \cite{Fryzlewicz_WBS}. In \cite{Vanegas2021}, a multiscale method is developed for detecting changes in pre-defined quantiles of the underlying distribution. Other techniques are based on the estimation of density functions \citep{Kawahara_Sugiyama2011}, or the extension of well-defined statistics, such as the Wilcoxon/Mann–Whitney rank-based criterion first employed in \cite{Darkhovskh1976} for the detection of at most one change-point, to the multiple change-point setting \citep{Lungetal_2011}. 

Our proposed approach, labelled Non-Parametric Isolate-Detect (NPID), is a generic technique for consistent non-parametric multiple change-point detection in a data sequence. It adapts the isolate-detect principle, introduced in a parametric change-point detection context by 
\cite{Anastasiou_Fryzlewicz} and studied also in \cite{ccid} and \cite{Anastasiou_Papanastasiou}, to the non-parametric setting.
NPID consists of two main stages; firstly, the isolation of each of the true change-points within subintervals of the domain $[1,\ldots,T]$, and secondly their detection. 
The isolation step ensures that, with high probability, no other change-point is present in the interval under consideration, which enhances the detection power, especially in difficult scenarios such as ones involving limited spacings between consecutive change-points or other low-signal-to-noise-ratio settings. NPID's ability to analyse such structures accurately, with low computational cost (see Sections \ref{subsec:Compcost} and \ref{sec:simulation}), adds to the appeal of the proposed approach.
Within each interval, where, by construction, there is at most one change-point, we use the non-parametric approach of \cite{Carlstein1988} to carry out detection. For the theoretical results regarding the accuracy of NPID in estimating the locations of the change-points, we show in Section \ref{subsec:theory} that the optimal consistency rate is attained by employing optimal rate results from \cite{Dumbgen1991} in our proof strategy.
Section \ref{subsec:theory} situates our method within the existing literature.

We now briefly introduce the main steps of NPID. For an observed data sequence $\left\lbrace X_t\right\rbrace_{t=1,\ldots,T}$,
and with $\lambda_T$ a suitably chosen positive integer, the method first creates a collection of right- and left-expanding intervals
$S_{RL} = \left\lbrace R_1, L_1, R_2, L_2, \ldots,R_K,L_K\right\rbrace$, where
$K = \left\lceil T/\lambda_T \right\rceil$,
$R_j = [1,\min\left\lbrace j{\lambda_T}+1, T\right\rbrace]$ and
$L_{j} = [\max\left\lbrace T - j\lambda_T,1 \right\rbrace,T]$.
A suitably chosen contrast function, 
whose value at location $b \in \left\lbrace s,s+1,\ldots,e-1\right\rbrace$ and for an input argument $u \in \mathbb{R}$ is denoted by
$\tilde{B}_{s,e}^{b}(u)$
(formula \eqref{contrast_non_par}),
measures the difference between the pre-$b$ and post-$b$ empirical distributions at $u \in \mathbb{R}$.
NPID first works within $R_1$ and calculates $\tilde{B}_{1,\lambda_T+1}^{b}(X_i)$, for each $i \in \left\lbrace 1,2,\ldots, T\right\rbrace$ and for each $b \in \left\lbrace 1,2,\ldots,\lambda_T\right\rbrace$. This creates $\lambda_T$ vectors $$\boldsymbol{y_b} = \left(\tilde{B}_{1,\lambda_T+1}^{b}(X_1), \tilde{B}_{1,\lambda_T+1}^{b}(X_2), \ldots, \tilde{B}_{1,\lambda_T+1}^{b}(X_T)\right),\quad b=1,\ldots,\lambda_T.$$
The next step is to aggregate the contrast function information across $i \in \left\lbrace 1,2,\ldots, T\right\rbrace$ by applying to each $\boldsymbol{y_b}$ a mean-dominant norm $L: \mathbb{R}^T \rightarrow \mathbb{R}$; the mean-dominance property is such for any $d \in \mathbb{Z}^{+}$ and $\forall \boldsymbol{x} \in \mathbb{R}^{d}$ with $x_i \geq 0, i = 1,\ldots, d$, it holds that $L(\boldsymbol{x}) \geq \frac{1}{d}\sum_{i=1}^{d}x_i$. The formal mathematical definition of a mean-dominant norm is given in Section 2 of \cite{Carlstein1988} and examples include
\begin{equation}
\label{mean_dominant}
L_1(\boldsymbol{y_b}) = \frac{1}{T}\sum_{i=1}^T |y_{b,i}|, \quad L_2(\boldsymbol{y_b}) = \frac{1}{\sqrt{T}}\sqrt{\sum_{i=1}^T y_{b,i}^2}, \quad L_\infty(\boldsymbol{y_b}) = \sup_{i=1,\ldots,T}\left\lbrace |y_{b,i}|\right\rbrace.
\end{equation}
Applying $L(\cdot)$ to each $\boldsymbol{y_b}, b = 1,\ldots, \lambda_T$,  returns a vector $\boldsymbol{v}$ of length $\lambda_T$. With $\tilde{b}_{R_1}$ := ${\rm argmax}_j\left\lbrace v_j \right\rbrace$, if $v_{\tilde{b}_{R_1}}$ exceeds a certain threshold $\zeta_T$, then $\tilde{b}_{R_1}$ is taken as a change-point. If not, then the process tests the next interval in $S_{RL}$. After detection, the algorithm makes a new start from the end-point (or start-point) of the right- (or left-) expanding interval on which the detection occurred. Given a suitable choice of the threshold $\zeta_T$ (more details are provided in Section \ref{subsec:parameter_choice}), NPID ensures that we work on intervals with at most one change-point, with a high global probability.


The paper is organized as follows. Section \ref{sec:methodology_theory} defines NPID and gives the associated consistency theory for the number and locations of the estimated change-points. 
In Section \ref{sec:variants}, we discuss the computational aspects of NPID and the selection of the tuning parameter. In Section \ref{sec:simulation}, we provide a comparative simulation study. Section \ref{sec:real_data} contains real-life data examples, and Section \ref{sec:conclusion} concludes.
The \textsf{R} code implementing the method is available at \url{https://github.com/Anastasiou-Andreas/NPID}.

\section{Methodology and Theory}
\label{sec:methodology_theory}
\subsection{Methodology}
\label{subsec:methodology}
The general non-parametric framework is given in \eqref{our_model}. We assume that the $X_t$'s are mutually independent.
$N$ can possibly grow with the sample size, $T$. 
We first explain the coupling of the Isolate-Detect (ID) scheme, as introduced in \cite{Anastasiou_Fryzlewicz}, with the use of the empirical cumulative distribution function (ECDF) for the detection of distributional changes as in \cite{Carlstein1988}. We start with ID, where the isolation of each change-point, prior to detection, is carried out as in \cite{Anastasiou_Fryzlewicz}. For clarity of exposition, we show graphically the different isolation phases through a simple example of two change-points, $r_1=22$ and $r_2=44$, in a data sequence $X_t$ of length $T=60$, and with the three different distributions denoted by $F_j, j=1, 2, 3$. We have
\begin{equation}
\label{example}
 X_t \sim \begin{cases}
    F_1, & t=1,\ldots,22\\
    F_2, & t=23,24,\ldots,44\\
    F_3, & t=45,46,\ldots,60.
  \end{cases}
\end{equation}
We have Phases 1 and 2 involving four and one intervals, respectively. These are clearly indicated in Figure \ref{isolation_example} and they are only related to this specific example, because for cases with a different number of change-points we would have a different number of such phases. At the beginning $s = 1, e = 60$, and we take the expansion parameter, $\lambda_{T}$, to be equal to $10$. The first change-point to be isolated is $r_2$; this happens at the fourth step of Phase 1 and in the interval $[41,60]$. Given a suitable contrast function (details are given later in this section), $r_2$ gets detected. Following the detection, $e$ is updated as the start-point of the interval on which the detection occurred; therefore, $e=41$. In Phase 2 indicated in the figure, NPID is then applied in $[s,e]=[1,41]$. Intervals 1 and 3 of Phase 1 will not be re-examined in Phase 2 and $r_1$ is isolated, and then detected, in the interval [1,30]. After the detection, $s$ is updated as the end-point of the interval where the detection occurred; therefore, $s=30$. Our method is then applied in $[s,e] = [30,41]$; there are no more change-points to be isolated and given a suitable choice of the threshold, the process will terminate.
\begin{figure}
\centering
\includegraphics[width=12cm,height=5cm]{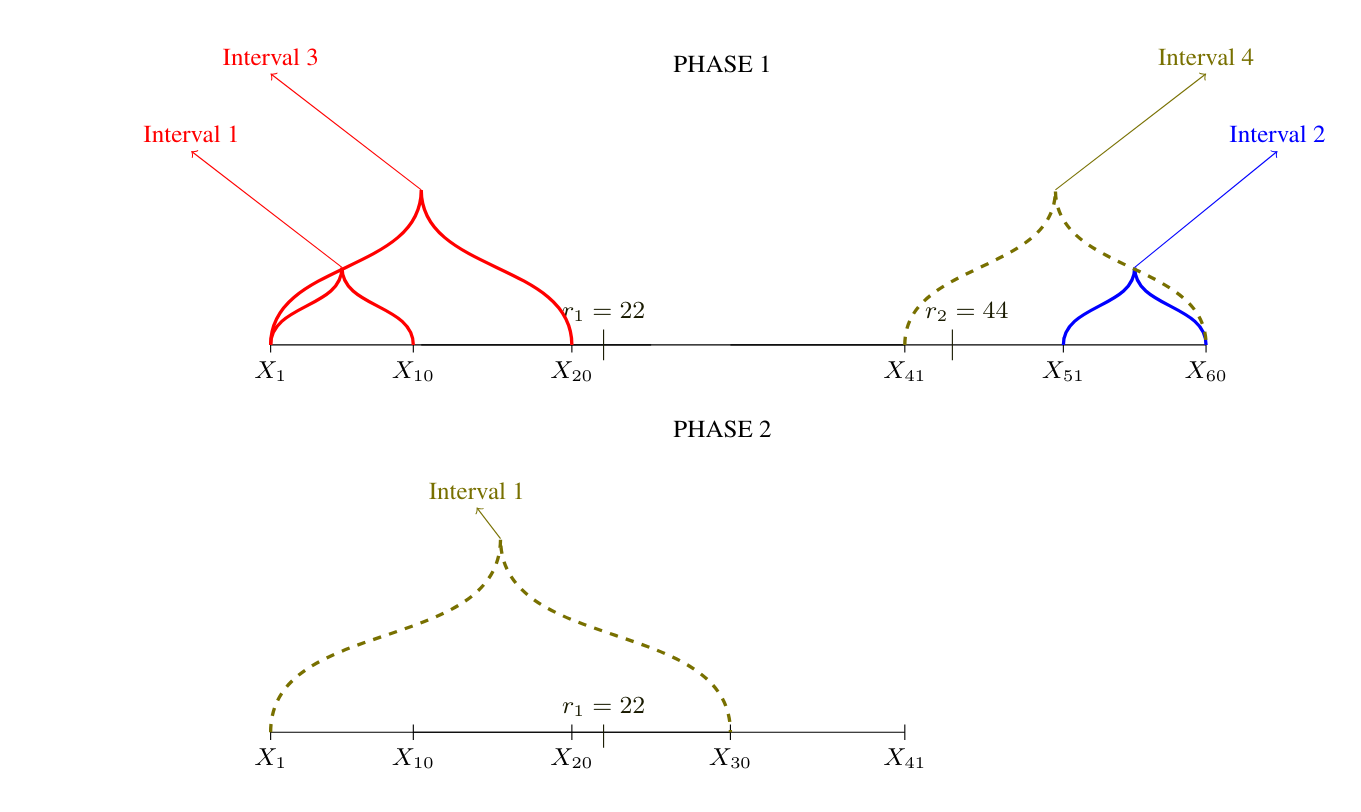}
\caption{The isolation and detection process for the example in \eqref{example}. The right and left expanding intervals are coloured in red and blue, respectively. The green dashed line is the first isolation interval at each phase.}
\label{isolation_example}
\end{figure}
The guaranteed (with high probability) isolation aspect in NPID ensures that the change-points will be detected one-by-one, while not being affected by changes in the data occurring at locations outside the current interval. This allows us to split the multiple change-point detection problem into a sequence of single change-point detection problems. As a result, optimal rate consistency results are obtained through the machinery appropriate for single change-point estimation; this is where the strategy of \cite{Dumbgen1991} comes in useful. To the best of our knowledge, \cite{Carlstein1988} is the first work on non-parametric single change-point estimation under no moment, support, or functional form assumptions for the distribution of the data before and after the change-point. The theoretical properties of the estimator proposed in \cite{Carlstein1988} include an exponential bound on the error probability and strong consistency, though with a sub-optimal rate. \cite{Dumbgen1991} embeds the non-parametric estimator of \cite{Carlstein1988} in a more general framework, in which the optimal consistency rate is attained.
\vspace{0.1in}
{\raggedright{{\textbf{The contrast function}}}}\\
The contrast we use is a non-parametric CUSUM function.
For a sample $X_1, X_2, \ldots, X_T$, with $B_t(u) := \mathbbm{1}_{\left\lbrace X_t \leq u\right\rbrace}$, define its ECDF as 
\begin{equation}
\label{empirical}
\hat{F}_T(u) = \frac{1}{T}\sum_{t=1}^{T}B_t(u), \quad u \in \mathbb{R}.
\end{equation} 
The non-parametric CUSUM is
\begin{equation}
\label{contrast_non_par}
\tilde{B}_{s,e}^{b}(u) = \sqrt{\frac{e-b}{(b-s+1)(e-s+1)}}\sum_{t=s}^{b}B_{t}(u) - \sqrt{\frac{b-s+1}{(e-b)(e-s+1)}}\sum_{t=b + 1}^{e}B_{t}(u),
\end{equation}
which is a weighted difference of the pre-$b$ and post-$b$ averages of $B_{t}(u)$ (or in other words a weighted difference of ECDFs), with $b$ being the time point under consideration.
\eqref{contrast_non_par} is used in the single change-point setting in \cite{Carlstein1988} and \cite{Dumbgen1991}, and in the multiple change-point setting in \cite{Padilla_et_al_2019}, where taking the supremum over all $u \in \mathbb{R}$ of the absolute value of \eqref{contrast_non_par} yields the ``CUSUM Kolmogorov-Smirnov statistic''. A detailed comparison of NPID with the method developed in \cite{Padilla_et_al_2019} is provided in Sections \ref{subsec:theory} and \ref{subsec:Compcost}. In \cite{Zouetal2014}, the sample of $B_t(u), t=1,\ldots,T$ is regarded as independent Bernoulli data with probability of success $\hat{F}(u)$. Working under this framework, the joint profile log-likelihood for a candidate set of change-points is obtained, and then maximized in an integrated form (over $\mathbb{R}$) in order to estimate the change-points.


We now describe our method generically through pseudocode. The aggregation of the contrast function values at each time point examined is an essential step in our algorithm. For the aggregation of the contrast values a mean-dominant norm, $L(\cdot)$, is used at each location $b \in [s,e)$, where $s$ and $e$ are given. The data sequence $\boldsymbol{X} = (X_1, X_2, \ldots, X_T)$ is also given.
\\
\vspace{-0.1in}
\\
{\textbf{function}} AGGREGATION($\boldsymbol{X}, s, e, L$)
\begin{algorithmic}
\FOR {$b = s, \ldots, e - 1$} \STATE { \FOR {$i = 1, 2, \ldots, T$}\STATE{$\tilde{B}_{s,e}^{b}(X_i) = \sqrt{\frac{e-b}{(b-s+1)(e-s+1)}}\sum_{t=s}^{b}B_{t}(X_i) - \sqrt{\frac{b-s+1}{(e-b)(e-s+1)}}\sum_{t=b + 1}^{e}B_{t}(X_i)$}\ENDFOR\\
$\boldsymbol{y_b} = (\tilde{B}_{s,e}^{b}(X_1), \tilde{B}_{s,e}^{b}(X_2), \ldots, \tilde{B}_{s,e}^{b}(X_T))^{\intercal}$\\
$v_b = L(\boldsymbol{y_b})$}\ENDFOR \\
Return($\boldsymbol{v}$)
\end{algorithmic}
{\textbf{end function}}
\\
\vspace{0.1in}
\\
We now specify the main function for the detection of change-points one by one. With
\begin{equation}
\label{def:deltaT}
    \delta_T = \min_{j=1,\ldots,N+1} \left|r_{j} - r_{j-1}\right|,
\end{equation}
we take the expansion parameter $\lambda_T$, which is an integer such that $0 < \lambda_T < \delta_T$ and set
\begin{equation}
\label{eq:K_intervals}
K = \lceil T/\lambda_T\rceil.
\end{equation}
We let $c_{j}^r = j\lambda_T + 1$ and $c_{j}^l = T - j\lambda_T$ for $j=1,\ldots, K-1$, with $c_{K}^r = T$ and $c_{K}^l = 1$. For a generic interval $[s,e]$, define the sequences
\begin{equation}
\label{expanding_points_s_e}
{\rm R}_{s,e} = \left[c_{k_1}^r, c_{k_1+1}^r, \ldots,e \right], \quad {\rm L}_{s,e} = \left[c_{k_2}^l, c_{k_2+1}^l, \ldots, s\right],
\end{equation}
where $k_1 := {\rm argmin}_{j \in \left\lbrace 1,\ldots,K \right\rbrace}\left\lbrace j\lambda_T + 1 > s \right\rbrace$ and $k_2 := {\rm argmin}_{j\in \left\lbrace 1,\ldots,K \right\rbrace}\left\lbrace T-j\lambda_T < e \right\rbrace$. Let $L(\cdot)$ be the mean-dominant norm used, and denote by $|A|$ the cardinality of $A$, and by $A(j)$ its $j^{th}$ element (if ordered). The pseudocode of the main change-point detection function is below.
\\
\vspace{-0.1in}
\\
{\textbf{function}} {\textbf{N}}{\small{ON}}{\textbf{P}}{\small{ARAMETRIC}}{\textbf{I}}{\small{SOLATE}}{\textbf{D}}{\small{ETECT}}($\boldsymbol{X}, s, e, \lambda_T, \zeta_T, L$)
\begin{algorithmic}
\IF{$e - s < 1$}\STATE{STOP} \ELSE \STATE{For $j \in \left\lbrace 1,\ldots,\left|{\rm R}_{s,e}\right|\right\rbrace,$ denote $\left[s_{2j-1}, e_{2j-1}\right] := \left[s, {\rm R}_{s,e}(j)\right]$\\
For $j \in \left\lbrace 1,\ldots,\left|{\rm L}_{s,e}\right|\right\rbrace,$ denote $\left[s_{2j}, e_{2j}\right] := \left[{\rm L}_{s,e}(j), e\right]$\\
$i = 1$\\
\textbf{(Main part)}\\
$s^* := s_{2i-1}, \; e^* := e_{2i-1}$\\
$\boldsymbol{v}$ = AGGREGATION($s^*, e^*, L$)
\STATE {$b^*_{2i-1} := \underset{s^*\leq b < e^*}{\rm argmax}\left\lbrace v_b\right\rbrace$
\IF{$v_{b^*_{2i-1}} > \zeta_T$}\STATE{add $b^*_{2i-1}$ to the set of estimated change-points.\\
{\textbf{N}}{\small{ON}}{\textbf{P}}{\small{ARAMETRIC}}{\textbf{I}}{\small{SOLATE}}{\textbf{D}}{\small{ETECT}}($e^*, e, \lambda_T, \zeta_T, L$)}\ELSE \STATE{$s^* := s_{2i}, \; e^* := e_{2i}$\\ $\boldsymbol{v}$ = AGGREGATION($s^*, e^*, L$)
\STATE {$b^*_{2i} := \underset{s^*\leq b < e^*}{\rm argmax}\left\lbrace v_b\right\rbrace$
\IF{$v_{b^*_{2i}} > \zeta_T$} \STATE{add $b_{2i}^*$ to the set of estimated change-points.\\
{\textbf{N}}{\small{ON}}{\textbf{P}}{\small{ARAMETRIC}}{\textbf{I}}{\small{SOLATE}}{\textbf{D}}{\small{ETECT}}($s, s^*, \lambda_T, \zeta_T, L$)}\\
 \ELSE \STATE{$i = i+1$\\
\IF {$i \leq \max\left\lbrace|{\rm L}_{s,e}|,|{\rm R}_{s,e}|\right\rbrace$}\STATE{Go back to {\textbf{(Main part)}} and repeat}\ELSE \STATE{STOP}\ENDIF}
\ENDIF}}\ENDIF}}\ENDIF\\
\end{algorithmic}
{\textbf{end function}}
\\
\vspace{0.1in}
\\
The call to launch NPID is {\textbf{N}}{\small{ON}}{\textbf{P}}{\small{ARAMETRIC}}{\textbf{I}}{\small{SOLATE}}{\textbf{D}}{\small{ETECT}}($\boldsymbol{X}, 1, T, \lambda_T, \zeta_T, L$).
An explanation of the pseudocode follows. With $K$ as in \eqref{eq:K_intervals}, we use the intervals $[s_1,e_1], [s_2,e_2], \ldots,$ $[s_{2K},e_{2K}]$ in the isolation step. The algorithm is looking for change-points interchangeably in right- and left-expanding intervals. The change-points, if any, are detected one by one in such intervals. Note that, between subsequent detections, in the odd-indexed intervals $[s_1, e_1], [s_3, e_3], \ldots, [s_{2K - 1}, e_{2K - 1}]$, the start-point is fixed and equal to $s$; in the even intervals $[s_2, e_2], [s_4, e_4], \ldots, [s_{2K}, e_{2K}]$, the end-point is fixed and equal to $e$. After the detection of a change-point, $s$ or $e$ is updated based on whether the change-point was detected in a right- or a left-expanding interval, respectively. The process follows until there are no intervals to be checked.
Change-point detection occurs if and when the aggregated contrast function values (obtained through the AGGREGATION($\cdot, \cdot, \cdot, \cdot$) function) at specific time points surpass a suitably chosen threshold $\zeta_T$. The algorithm stops when it is applied on an interval $[s,e]$, such that for all expanding intervals $[s^*_j,e^*_j] \subseteq [s,e]$, we have that for $\boldsymbol{v^{(j)}} :=$ AGGREGATION$(\boldsymbol{X}, s^*_j,e^*_j, L)$, there is no $b^*_j \in [s^*_j, e^*_j)$ with $v^{(j)}_{b^*_j} > \zeta_T$.
\subsection{Theoretical behaviour of NPID}
\label{subsec:theory}
For $x \in \mathbb{R}$, denote
\begin{align}
\label{general_notation}
\nonumber & \delta_j = r_{j} - r_{j-1}, \quad j = 1, \ldots, N+1,\\
\nonumber & F_{t}(x) = \Prob(X_t \leq x), \quad t = 1,\ldots, T,\\
& \Delta_{j}(x) = F_{r_{j+1}}(x) - F_{r_j}(x), \quad j=1,\ldots, N,
\end{align}
where $L(\cdot)$ is the mean-dominant norm used in the aggregation step. For the main result of Theorem \ref{consistency_theorem}, we make the following assumption.
\begin{itemize}
\item[(A1)]  For $\delta_T$ as in (\ref{def:deltaT}), it holds that $\delta_T \rightarrow \infty$ as $T \rightarrow \infty$. Furthermore, $\forall j \in \{1,\ldots,N\}$ there are constants
$\tilde{C}_j > 0$ independent of $T$, and sequences $\{\gamma_{j,T}\} \in \mathbb{R}^+$ such that
\begin{equation*}
\Prob\left(L(|\Delta_j(X_1)|, \ldots, |\Delta_j(X_T)|) \geq \frac{\tilde{C}_j}{\gamma_{j,T}}\right) \xrightarrow[T\rightarrow \infty]{} 1
\end{equation*}
and for $\delta_j$ as in \eqref{general_notation}, we require that $\underline{m}_T := \underset{j \in \left\lbrace 1, \ldots, N\right\rbrace}{\min}\left\lbrace \tilde{C}_j\sqrt{\min\left\lbrace \delta_j, \delta_{j+1}\right\rbrace}/\gamma_{j,T} \right\rbrace \geq \underline{C}\sqrt{\log T}$, for a large enough constant $\underline{C}$.
\end{itemize}
The number of change-points, $N$, is assumed neither known nor fixed, and can grow with $T$. Due to the minimum distance,
$\delta_T$, between two change-points, the only indirect assumption on the true number of change-points is that $N + 1 \leq T /\delta_T$. Below, we state the consistency result for the number and location of the estimated change-points, when any mean-dominant norm $L(\cdot)$ is employed. The proof is given in the supplementary material. However, in Appendix \ref{sec:App_B} we provide a brief discussion of the main steps followed in the proof.
\begin{theorem}
\label{consistency_theorem}
Let $\left\lbrace X_t \right\rbrace_{t=1,\ldots,T}$ follow model \eqref{our_model} and assume that (A1) holds. Let $N$ and $r_j, j=1,\ldots,N$ be the number and locations of the change-points, while $\hat{N}$ and $\hat{r}_j, j=1,\ldots,\hat{N}$ are their estimates (sorted in increasing order) when NPID is employed with any mean-dominant norm. Then, there exist positive constants $C_1, C_2$, independent of $T$, such that for $C_1\sqrt{\log T}\leq \zeta_T < C_2\underline{m}_T$, we have that for $d \rightarrow \infty$,
\begin{equation}
\label{mainresult_theorem}
\mathbb{P}\,\left(\hat{N} = N, \max_{j=1,\ldots,N}\left\lbrace\left|\hat{r}_j - r_j\right|/\gamma_{j,T}^2
\right\rbrace \leq d \right)\xrightarrow[T\rightarrow \infty]{} 1.
\end{equation}
\end{theorem}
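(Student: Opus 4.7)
The plan is to reduce the multiple change-point problem to a sequence of single change-point problems by exploiting NPID's isolate-detect structure, and then to import the optimal-rate machinery of \cite{Dumbgen1991} for each of those single problems. The entire argument will be conducted on a single high-probability event $\mathcal{A}_T$ that controls the fluctuation of the aggregated CUSUM uniformly over all candidate intervals.

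First I would define
\begin{equation*}
\mathcal{A}_T = \left\{ \sup_{[s,e]\subseteq[1,T]}\;\sup_{s\le b<e} \bigl| L(\boldsymbol{y_b}) - \E L(\boldsymbol{y_b}) \bigr| \le C_1\sqrt{\log T} \right\},
\end{equation*}
where $\boldsymbol{y_b}=(\tilde B^b_{s,e}(X_1),\ldots,\tilde B^b_{s,e}(X_T))$ is the vector entering the AGGREGATION step. Since $B_t(u)=\mathbbm{1}\{X_t\le u\}$ belongs to a VC class of dimension $1$, a Hoeffding/Bernstein-type concentration bound applies to each coordinate of $\boldsymbol{y_b}$; a mean-dominant norm inherits this concentration because $L$ is $1$-Lipschitz in the $\ell_\infty$ sense on bounded arguments, and a union bound over the $O(T^2)$ pairs $([s,e],b)$ yields $\Prob(\mathcal{A}_T)\to 1$ and fixes the constant $C_1$.

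Next, on $\mathcal{A}_T$, I would verify by induction on the detections that (i) each true change-point $r_j$ is eventually isolated in a right- or left-expanding interval $[s^*,e^*]$ of length $O(\lambda_T)$ containing no other $r_k$; this is exactly the isolation mechanism of \cite{Anastasiou_Fryzlewicz} and works because $\lambda_T<\delta_T$, so by the first time an expanding interval covers $r_j$ it cannot also cover a neighbour. (ii) On such an isolated interval, assumption (A1) together with standard Glivenko--Cantelli-type bounds forces $\E L(\boldsymbol{y_b})$ evaluated at the optimal $b$ to be of order at least $\tilde{C}_j\sqrt{\min(\delta_j,\delta_{j+1})}/\gamma_{j,T}\ge \underline{C}\sqrt{\log T}$, which exceeds $\zeta_T$ for $\underline{C}$ large; hence detection is triggered. (iii) On any interval containing no true change-point, $\E L(\boldsymbol{y_b})=0$, so on $\mathcal{A}_T$ we have $\max_b v_b\le C_1\sqrt{\log T}\le \zeta_T$ and no false detection is produced. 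These three facts together yield $\hat N=N$ on $\mathcal{A}_T$.

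To control the localisation error $|\hat r_j - r_j|$, I would invoke the single change-point rate of \cite{Dumbgen1991} on each isolated interval $[s^*,e^*]$: once isolation is secured, the problem there is exactly the one Dümbgen analyses, namely maximising a mean-dominant norm of a CUSUM built from $B_t(u)$ in the presence of at most one change-point, and his Theorem on optimal rates delivers $|\hat r_j - r_j|\le d\,\gamma_{j,T}^2$ uniformly in $j$ with probability tending to one. The main obstacle I anticipate is producing the uniform concentration of $L(\boldsymbol{y_b})$ over all $O(T^2)$ intervals with a bound that is simultaneously tight enough to remain below the signal lower bound $\underline{m}_T$ of (A1); a secondary subtlety is the inductive bookkeeping after each detection, namely checking that the update of $s$ or $e$ to the boundary of the detection interval neither misses nor absorbs any remaining change-point, so that the residual problem on the trimmed domain still satisfies the isolation hypothesis used by the next recursive call.
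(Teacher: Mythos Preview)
Your high-level architecture matches the paper's: work on a high-probability fluctuation event, argue inductively that NPID isolates and then detects each $r_j$, import D\"umbgen's single-change-point rate for localisation, and verify no spurious detection once the last true change-point has been found. However, the specific centering you choose in $\mathcal{A}_T$ breaks step~(iii). On a change-free interval each coordinate $y_{b,i}=\tilde B_{s,e}^b(X_i)$ is a mean-zero, variance-$O(1)$ statistic, so $L(\boldsymbol{y_b})$ is the norm of a nondegenerate random vector and $\E L(\boldsymbol{y_b})$ is a positive constant, not zero; hence $\max_b v_b\le C_1\sqrt{\log T}$ does \emph{not} follow on $\mathcal{A}_T$ as you state. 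The paper centres instead around $L\bigl(|\boldsymbol{\tilde F_{s,e}^b}|\bigr)$, where $\tilde F_{s,e}^b(u)$ is the CUSUM of the true CDFs $F_t(u)$; this quantity \emph{is} identically zero on change-free intervals, and the deviation $\bigl|L(|\boldsymbol{\tilde B_{s,e}^b}|)-L(|\boldsymbol{\tilde F_{s,e}^b}|)\bigr|$ is controlled, via the Dvoretzky--Kiefer--Wolfowitz inequality applied separately before and after the (at most one) change-point in $[s,e)$, by $4\sqrt{\log T}$. This is what forces $C_1>4$ and makes both the detection lower bound in (ii) and the no-false-positive bound in (iii) go through.

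A secondary error: the isolating interval is not of length $O(\lambda_T)$. It starts at the current boundary $s$ (or ends at $e$) and expands until it just covers the next change-point, so its length is of order the adjacent spacing $\delta_j$ or $\delta_{j+1}$, not $\lambda_T$. What matters for the signal lower bound in (ii) is that both $r_j-s^*$ and $e^*-r_j$ are bounded below by a fixed fraction of $\min(\delta_j,\delta_{j+1})$; the paper arranges this by placing flanking windows $I_j^L,I_j^R$ of lengths $\delta_j/3,\delta_{j+1}/3$ around each $r_j$ and using $\lambda_T\le\delta_T/3$ to guarantee some expansion endpoint lands in each. Your inductive bookkeeping concern is real and is handled in the paper by checking explicitly that the bypassed segment between the detected location and the restart point cannot contain an undetected $r_k$, which requires the localisation bound $|\hat r_j-r_j|\le d\gamma_{j,T}^2$ combined with $\min(\delta_j,\delta_{j+1})>3d\gamma_{j,T}^2$ (a consequence of (A1) when $d=O(\log T)$).
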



It is known \citep{Dumbgen1991} that in the case of a single non-parametric change-point detection, it is possible to achieve (using the notation from \cite{Dumbgen1991} simplified to the real-valued variable setting) $\Delta^2\left|\hat{r} - r\right| = \mathcal{O}_P(1)$, where $r$ and $\hat{r}$  are the true and estimated change-points, respectively, and $\Delta = \sup_{u \in \mathbb{R}}|F_{r}(u) - F_{r+1}(u)|$, where $F_{t}(\cdot)$ is the cumulative distribution function at time point $t$. Previous attempts, for example  \cite{Carlstein1988}, showed consistency results for the estimated change-point but with a worse rate. Our result attains the optimal consistency rate in the case of multiple change-points, something that was also obtained in \cite{Zouetal2014} but under stronger assumptions than our (A1). In particular, \cite{Zouetal2014} require that there is an upper bound, denoted by $\bar{K}_n$ for the true number of change-points. This upper bound, $\bar{N}$, is such that $\delta_T/(\bar{N}^4(\log \bar{N})^2(\log T)^{2+c}) \xrightarrow[T\to\infty]{} \infty$, where $\delta_T$ is the minimum distance between consecutive change-points and $c > 0$; more details on this restriction can be found later in this section.
In addition, \cite{Zouetal2014} require almost sure convergence of the empirical to the true CDF (Glivenko-Cantelli theorem). Proof of Theorem \ref{consistency_theorem} uses Dvoretzky-Kiefer-Wolfowitz inequality, which is necessary for the Glivenko-Cantelli theorem, but not vice versa. Furthermore, the consistency results in \cite{Zouetal2014} hold for continuous distributions within each homogeneous segment (it is highlighted though that, in practice, the distributions can also be discrete or mixed), while in NPID the distributions between two change-points can be (either in theory or in practice) continuous, discrete, or mixed, as is also the case in \cite{Carlstein1988} and \cite{Dumbgen1991} for the single change-point detection scenario. A more thorough comparison between NPID and the main competitors in the literature is given later in this section. An earlier attempt by \cite{Lee_1996} uses weighted empirical measures to detect a distributional difference over a running window. A restrictive assumption on the difference between two neighbouring distributions in that work leads to the almost-sure consistency for locations with the rate of $\mathcal{O}(\log T)$; a practical drawback of the method is the need to choose the window length $A_T$, which our approach bypasses.

Our method, under relatively weak assumptions, attains the optimal rate for the distances between the true and the estimated change-point locations. It can be used in the detection of distributional changes in difficult structures such as ones involving short spacings between consecutive change-points and/or high degrees of distributional similarity across neighbouring segments. We illustrate the practical performance of NPID in Sections \ref{sec:simulation} and \ref{sec:real_data}.

Motivated by Theorem \ref{consistency_theorem}, we use in practice thresholds of the form
\begin{equation}
\label{threshold}
\zeta_T = C\sqrt{\log T},
\end{equation}
where the choice of the constant $C$ will be discussed in Section \ref{sec:variants} for two examples of mean-dominant norms: $L_2(\cdot)$ and $L_\infty(\cdot)$, as in \eqref{mean_dominant}. 
\vspace{0.05in}
\\
{\textbf{Comparison with the main competitors.}} We now provide a further comparison of NPID to its main competitors, the NMCD method of \cite{Zouetal2014} and the NWBS algorithm of \cite{Padilla_et_al_2019}. We do not include in this comparison the ECP approach of \cite{Matteson_James_2014} as the latter uses moment assumptions and is therefore not classically non-parametric (in particular, it is not invariant with respect to monotone transformations of the data). Furthermore, the method of \cite{Lee_1996} is also not included in the comparison as it is shown in \cite{Zouetal2014} that the estimated number and locations of the change-points, using the method of \cite{Lee_1996}, are not satisfactory; more specifically, in a simulation study, it was shown that the resulting models led to overfit in all scenarios tested.

Under the mild assumption (A1), NPID consistently estimates the true number and the locations of the change-points, achieving the optimal rate $\mathcal{O}_{P}(1)$ for the estimated locations. As discussed earlier, the same rate is attained by the NMCD, but under more restrictive assumptions. The NMCD approach is a two-step process in which the number of change-points is first estimated through a version of the Bayesian Information Criterion (BIC), and the locations are estimated next. For $\bar{N}$ being the maximum permitted number of change-points (the fixed-$N$ scenario), NMCD imposes the restrictive assumption $\delta_T/(\bar{N}^4(\log \bar{N})^2(\log T)^{2+c}) \rightarrow \infty$ for $c > 0$. This means that, for the consistency result to hold, the minimum distance between consecutive change-points needs to be of order at least $O((\log T)^{2+c})$; this is as long as the true number of change-points is kept fixed and not allowed to increase with the sample size. In the case in which $\bar{N}$ is allowed to increase with $T$, the condition on the order of $\delta_T$ becomes even stricter. Such strict assumptions on $\delta_T$ are absent in our method; Assumption (A1) essentially requires that the product between the distance of consecutive change-points and the square of the associated magnitude of change should be of order at least $\log T$. In addition, \cite{Zouetal2014} require the magnitude of the changes to be bounded away from zero. In contrast, in NPID, if the minimum distance between two consecutive change-points, $\delta_T$, is of a higher order than $\mathcal{O}(\log T)$, then Assumption (A1) implies that the magnitudes of the changes could go to zero as $T \to \infty$.

With regards to the NWBS approach of \cite{Padilla_et_al_2019}, the rate of consistency for the change-point locations obtained through NWBS is worse than that obtained by NPID, being off by a logarithmic factor. In addition, \cite{Padilla_et_al_2019} work under an assumption that requires $\sqrt{\min_{j \in \{1,\ldots,N+1\}}\{\delta_{j}\}} \min_{j}\{\sup_{x \in \mathbb{R}}|\Delta_j(x)|\} \geq \underline{C}\sqrt{\log T}$, while in the same concept we require the less restrictive $\min_{j \in \{1,\ldots,N\}}\{\min\{\delta_j,\delta_{j+1}\}\sup_{x \in \mathbb{R}}|\Delta_j(x)|\} \geq \underline{C}\sqrt{\log T}$, where $\underline{C}$ is a large enough positive constant. This milder assumption (imposing a lower bound on the minimum of the products rather than on the product of the minimums) is the result of the guaranteed (on a high-probability event) change-point isolation prior to detection, enjoyed by our methodology. Furthermore, in NWBS, the distributional change at each change-point is measured through the Kolmogorov-Smirnov (KS) distance. The NPID methodology is more general, as it uses mean-dominant norms, of which $L_\infty(\cdot)$ (corresponding to KS) is but one example.
In signals with a large number of change-points, NWBS needs to increase the number $M$ of intervals drawn and this has a negative effect on the computation time, which is significantly greater than that for NPID; see Section \ref{subsec:Compcost} as well as the simulations in Section \ref{sec:simulation}. By contrast, due to the interval expansion approach, NPID is certain to examine all possible change-point locations leading to better practical performance with more predictable execution times. 
\subsection{Information Criterion approach}
\label{subsec:sSIC}
In NPID, the detection is based on the comparison of aggregated (through mean-dominant norms) contrast function values to a threshold, $\zeta_T$. In Section \ref{subsec:parameter_choice}, we explain how the threshold constant $C$ (formula (\ref{threshold})) is carefully chosen through a large-scale simulation study. The values obtained for the $L_\infty$ and the $L_2$ mean-dominant norms are also shown to control the Type I error; see the discussion in Section \ref{subsec:parameter_choice}, as well as the relevant results in Tables 1 and 2 of the online supplement. Even though NPID is robust to small deviations from the optimal threshold value, misspecification of the threshold can possibly lead to the misestimation of the number and/or the location of change-points. In order to reduce the dependence of our methodology on the threshold choice, we propose in this section an approach which starts by possibly overestimating the number of change-points and then creates a solution path, with the estimates ordered in importance according to a certain predefined criterion. The best fit is then chosen, based on the optimization of a model selection criterion.
\vspace{0.04in}
\linebreak
{\textbf{The solution path algorithm.}} With $\zeta_T^*$ being the optimal choice for the threshold value (more details can be found in Section \ref{subsec:parameter_choice}), we denote $\hat{N} := \hat{N}(\zeta_T^*)$. For given input data, we run NPID with a threshold set to $\tilde{\zeta}_T < \zeta_T^*$. Let $\tilde{C}$ be the $\tilde{\zeta}_T$-associated constant in \eqref{threshold}. We estimate $J \geq \hat{N}(\zeta_T^*)$ change-points denoted by $\tilde{r}_j, j=1,\ldots, J$. These are sorted in increasing order in $\tilde{S} = \left[\tilde{r}_1, \tilde{r}_2, \ldots, \tilde{r}_{J}\right]$. After this overestimation step, the idea is to remove change-points according to their mean-dominant norm values. The first step in the solution path algorithm is, for $\tilde{r}_0 = 0$ and $\tilde{r}_{J+1} = T$, to collect triplets $(\tilde{r}_{j-1},\tilde{r}_j,\tilde{r}_{j+1})$, $\forall\left\lbrace 1,\ldots, J\right\rbrace$ and to calculate $\tilde{B}_{\tilde{r}_{j-1}+1, \tilde{r}_{j+1}}^{\tilde{r}_j}(X_i)$ with $\tilde{B}_{s,e}^{b}(X_i)$ defined in \eqref{contrast_non_par}. Then, for $$\boldsymbol{B}^{\tilde{r}_j} = \left(\tilde{B}_{\tilde{r}_{j-1}+1, \tilde{r}_{j+1}}^{\tilde{r}_j}(X_1), \tilde{B}_{\tilde{r}_{j-1}+1, \tilde{r}_{j+1}}^{\tilde{r}_j}(X_2), \ldots, \tilde{B}_{\tilde{r}_{j-1}+1, \tilde{r}_{j+1}}^{\tilde{r}_j}(X_T)\right)$$
we employ the same mean-dominant norm $L(\cdot)$, as in the overestimation step explained above, on $\boldsymbol{B}^{\tilde{r}_j}$, for each $j \in \left\lbrace 1,\ldots, J\right\rbrace$. For $m = {\rm argmin}_{j}\left\lbrace L\left(\boldsymbol{B}^{\tilde{r}_j}\right) \right\rbrace$ we remove $\tilde{r}_m$ from the set $\tilde{S}$, reduce $J$ by 1, relabel the remaining estimates (in increasing order) in $\tilde{S}$, and repeat this estimate removal process until $\tilde{S} = \emptyset$. At the end of this change-point removal process, we collect the estimates in a vector
\begin{equation}
\label{orderedcpts}
\boldsymbol{b} = \left(b_1,b_2,\ldots,b_J\right),
\end{equation}
where $b_J$ is the estimate that was removed first, $b_{J-1}$ is the one that was removed second, and so on. The vector $\boldsymbol{b}$ is referred to as the solution path and is used to give a range of different fits.
\vspace{0.04in}
\linebreak
{\textbf{Model selection through BIC.}} We define the collection $\left\lbrace\mathcal{M}_j\right\rbrace_{j = 0,1,\ldots,J}$ where $\mathcal{M}_{0} = \emptyset$ and $\mathcal{M}_j = \left\lbrace b_1,b_2,\ldots,b_j\right\rbrace$. Among the collection of models $\left\lbrace \mathcal{M}_j\right\rbrace_{j=0,1,\ldots,J}$, we propose to select the one that minimizes a version of Schwarz's Bayesian Information Criterion (BIC) as given in \cite{Zouetal2014}. For each model $\mathcal{M}_j, j=0,1,\ldots,J$, the BIC provides a balance between the likelihood and the number of change-points by incorporating an appropriate penalty for larger values of $j$. More specifically, for $\boldsymbol{b}$ as in \eqref{orderedcpts}, we denote by $b^{*,j}_{1}, \ldots b^{*,j}_{j}$ the first $j$ elements of $\boldsymbol{b}$ sorted in increasing order. Through the solution path algorithm, for each $j \in \{1,2,\ldots,J\}$, the model $\mathcal{M}_j$ includes the $j$ most important estimated change-points. Let $b^{*,j}_0 = 0$ and $b_{j+1}^{*,j} = T$ when we work with model $\mathcal{M}_j$, for any $j$. The preferred model is then identified by minimizing, for $j \in \left\lbrace 0,1,\ldots, J\right\rbrace$,
\begin{equation}
\label{BIC}
{\rm BIC}(j) = -S_{T}\left(b^{*,j}_0, b^{*,j}_{j+1}, \ldots,b^{*,j}_{j+1}\right) + j\,p_T,
\end{equation}
where
\begin{align}
\label{S_T}
\nonumber S_{T}\left(b_0^{*,j},b_1^{*,j},\ldots,b_{j+1}^{*,j}\right) &=  T\sum_{i=0}^{j}\sum_{l=2}^{T-1}\left\lbrace\vphantom{\sum_{j=0}^{N}}\frac{b_{i+1}^{*,j} - b_{i}^{*,j}}{l(T-l)}\left[\hat{F}_{b_i^{*,j}}^{b_{i+1}^{*,j}}(X_{[l]})\log\left(\hat{F}_{b_i^{*,j}}^{b_{i+1}^{*,j}}(X_{[l]})\right)\right.\right.\\
& \qquad \left.\left.\qquad + \left(1 - \hat{F}_{b_i^{*,j}}^{b_{i+1}^{*,j}}(X_{[l]})\right)\log\left(1 - \hat{F}_{b_{i}^{*,j}}^{b_{i+1}^{*,j}}(X_{[l]})\right)\right]\vphantom{\sum_{j=0}^{N}}\right\rbrace
\end{align}
for $\hat{F}_{b_{i}^{*,j}}^{b_{i+1}^{*,j}}(u) = \frac{1}{b_{i+1}^{*,j} - b_{i}^{*,j}}\sum_{t = b_{i}^{*,j}+1}^{b_{i+1}^{*,j}}1_{\{X_t \leq u\}}$. The expression in \eqref{S_T}, whose expectation is maximized at the true change-point locations, is an integrated form of the profile log-likelihood function in the presence of the $j$ change-point within each candidate model $\mathcal{M}_j$. The term $p_T$ in \eqref{BIC} is the penalty which goes to infinity with $T$. \cite{Zouetal2014} prove theoretical consistency of the BIC-related approach by taking $p_T = \bar{K}_T^3
(\log \bar{K}_T)^2(\log T)^{2+c}$, where $\bar{K}_T$ is an upper bound on the true number of change-points; such a bound is necessary for NMCD, but not for the thresholding version of NPID, introduced earlier. A small value of the constant $c$ helps to prevent underfitting. The practical choice of $p_T$ in \cite{Zouetal2014} is $p_T = \frac{1}{2}(\log T)^{2.1}$, which is what we also use with NPID in Sections \ref{sec:simulation} and \ref{sec:real_data} (as an alternative to thresholding).
We highlight, though, that the theoretical results have been proven only when NPID is combined with thresholding; the proof requires weaker assumptions than those used in \cite{Zouetal2014}, where results related to BIC are proved (see also the relevant discussion in Section \ref{subsec:theory}).
\section{Computational complexity and practicalities}
\label{sec:variants}
\subsection{Computational cost}
\label{subsec:Compcost}
With $\delta_T$ being the minimum distance between two change-points, and $\lambda_T$ the interval-expansion parameter, we need $\lambda_T < \delta_T$ to achieve isolation. Since $K = \lceil T/\lambda_T\rceil > \lceil T/\delta_T \rceil$ and the total number, $M$, of distinct intervals required to scan the data is no more than $2K$ ($K$ intervals from each expanding direction), in the worst case scenario we have $M = 2K > 2\left\lceil\frac{T}{\delta_T}\right\rceil$. The cost of computing the non-parametric CUSUM, $\tilde{B}_{s,e}^b(u)$, in \eqref{contrast_non_par} is linear in time. Because we are aggregating over all the non-parametric CUSUM values obtained for the different $X_1, \ldots, X_T$, and since this is done for at most $M$ intervals, we conclude that the computational cost of our algorithm in the worst possible case is of order $\mathcal{O}\left(M T^2\right) = \mathcal{O}\left(T^3 \delta_T^{-1}\right)$. For the NWBS algorithm of \cite{Padilla_et_al_2019}, in order to guarantee consistency of NWBS, the drawing of $S > \frac{T^2}{\delta_T^2}\log\left(T/\delta_T\right)$ intervals is needed, which means that the computational complexity of NWBS is at least of order $\mathcal{O}\left((T^4/\delta_T^2)\log (T/\delta_T)\right)$. Therefore, NPID enjoys significant speed gains over NWBS. The reason behind this difference in the computational complexity of the methods is that in NWBS the start- and end-points of the randomly drawn intervals have to be chosen, whereas in NPID, depending on the expanding direction, we keep the start- or the end-point fixed.  As explained in Section \ref{subsec:methodology}, in the case of no detection, our method will have to be applied to larger intervals until a detection occurs, which means that the most computationally expensive scenario for NPID is when there are no change-points in a given data sequence. On the other hand, for signals with a large number of frequently occurring change-points, NPID will keep operating on short intervals, leading to fast computation times. This is not the case for many of the competitors; see for example \cite{Padilla_et_al_2019} and \cite{Matteson_James_2014}, where it is explained that the computational time for binary segmentation related approaches and some dynamic programming algorithms (for example the Segment Neighbourhood Search (SNS) algorithm as in \cite{Auger_Lawrence}) increases proportionally to the number of change-points.
This is also the case with NMCD, in which the SNS dynamic programming algorithm is employed for the maximization of an objective function. Given a maximum number, $B$, of change-points to search for, SNS calculates, based on a given cost function, the optimal segmentations for each $i \in \left\lbrace 0, 1, \ldots, B\right\rbrace$. If all the segment costs have already been computed, SNS has a computational cost of $\mathcal{O}(BT^2)$. In NMCD, the cost for a single segment is $\mathcal{O}(T)$ \citep{Haynes_Fearnhead2014}, which means that for all segments the computational cost is cubic in time. Therefore, the total computational cost of NMCD as first developed in \cite{Zouetal2014} is $\mathcal{O}(BT^2 + T^3)$. To improve upon such a substantial cost, a screening algorithm was introduced in \cite{Zouetal2014}; it reduces the computational complexity, but it is shown in \cite{Haynes_Fearnhead2014} that such a pre-processing step affects the accuracy of NMCD. Taking all these into consideration, \cite{Haynes_Fearnhead2014} extend NMCD by developing a computationally efficient approach that first simplifies the segment cost to be of $\mathcal{O}(\log T)$ rather than linear in time, and, second, applies the PELT dynamic programming approach as in \cite{Killick_PELT}, which is substantially quicker than the SNS algorithm.
\subsection{Parameter choice}
\label{subsec:parameter_choice}

{\textbf{Choice of the threshold constant.}} In order to select the $C$ in \eqref{threshold}, we ran a large-scale simulation study involving a wide range of signals in terms of both the number and the type of the change-points. The number of change-points, $N$, was generated from the Poisson distribution with rate parameter $N_{\alpha}\in \left\lbrace 4,8,12 \right\rbrace$. For $T \in \left\lbrace 100,200,400,800 \right\rbrace$, we uniformly distributed the change-point locations in $\left\lbrace 1,\ldots,T \right\rbrace$. Then, at each change-point we introduced (with equal probability) one of the following three types of changes:
\begin{itemize}
\item[(M)] Change in the mean of the observations;
\item[(V)] change in the variance of the observations;
\item[(D)] general change in the distribution beyond the mean or the variance only.
\end{itemize}
More information on these three types of changes can be found in Section 2 of the supplement. The simulation procedure was followed for two examples of mean-dominant norms; $L_2(\cdot)$ and $L_\infty(\cdot)$ in \eqref{mean_dominant}. The best behavior occurred when, approximately, $C=0.6$ and $C = 0.9$ for $L_2$ and $L_\infty$, respectively. From now on, whenever relevant, these values will be referred to as the default constants.

In an attempt to measure the Type I error obtained from this choice of threshold constants under the scenario of no change-points, we ran 100 replications for twenty different scenarios of no change-points, covering scenarios from both continuous and discrete distributions. We highlight here that our procedure is invariant under monotone transformations of the data; so, for example, we would expect identical performance for Gaussian and Cauchy models with no change-points (or any other continuous distribution), and the fact that we explicitly study both these models below only serves as an extra check of the correctness of our implementation. The models used are:
\begin{itemize}
\item[Gaussian]: The distribution used is the standard normal. There are no change-points and the lengths of the data sequences are $T \in \left\lbrace 30, 75, 200, 500\right\rbrace$.
\item[Cauchy]: The distribution used is the Cauchy with location and scale parameters equal to zero and one, respectively. There are no change-points and the lengths of the data sequences are $T \in \left\lbrace 30, 75, 200, 500\right\rbrace$.
\item[Poisson]: The distribution used is the Poisson with mean equal to $\lambda$, where $\lambda \in \left\lbrace 
0.3, 3, 30 \right\rbrace$. There are no change-points and the lengths of the data sequences are $T \in \left\lbrace 30, 75, 200, 500\right\rbrace$.
\end{itemize}
Tables 1 and 2 in Section 1 of the supplement present the frequency distribution of $\hat{N} - N$ for all the above scenarios and for both the $L_{\infty}$ and $L_2$ mean-dominant norms, when their respective default threshold constants are used. We conclude that the default constants lead to acceptably low values of the Type I error in all scenarios examined.

In the BIC-based approach of Section \ref{subsec:sSIC}, the first step requires the detection of change-points using threshold $\tilde{\zeta}_T < \zeta_T$. In practice, we take $\tilde{C}$, the constant related to $\tilde{\zeta}_T$, to be 20\% less than the default constant $C$.
\vspace{0.05in}
\\
{\textbf{Choice of the expansion parameter $\lambda_T$.}} We use $\lambda_T = 15$ in all examples shown in this paper. A lower value of $\lambda_T$ would lead to finer examination of the data, at the expense of an increased computational cost. Table \ref{table_lambda} gives the execution speeds for models (T1) and (T2) as defined below, on a 2.80GHz CPU with 16 GB of RAM. The models are
\begin{itemize}
\item[(T1)] Length $l \in \left\lbrace 3000, 6000, 9000 \right\rbrace$, with changes in the mean of a normal distribution at locations $30, 60, \ldots, l - 30$. The magnitude of the changes in the mean are equal to $4$. The standard deviation is $\sigma=0.5$.
\item[(T2)] Length $l \in \left\lbrace 3000, 6000, 9000\right\rbrace$, with changes in the variance of a normal distribution at locations $250, 500, \ldots l - 250$. The mean is equal to 0, while the standard deviation between change-points takes the values $1$ and $2$ interchangeably.
\end{itemize}
The table below shows that NPID is quick for a non-parametric method, and can comfortably analyse signals with lengths in the thousands.
\begin{table}
\centering
\caption{The average computational time of NPID using either the $L_2$ or the $L_\infty$ mean-dominant norm for the data sequences (T1) and (T2)}{
\begin{tabular}{|l|l|l|l|l|}
\cline{1-5}
 & \multicolumn{2}{|c|}{} & \multicolumn{2}{|c|}{}\\
 & \multicolumn{2}{|c|}{Time for (T1) (s)} & \multicolumn{2}{|c|}{Time for (T2) (s)}\\
\hline
$T$ & $L_2$ & $L_\infty$ & $L_2$ & $L_\infty$\\
\hline
$3000$ & 4.08 & 3.39 & 8.29 & 9.27\\
$6000$ & 21.34 & 13.52 & 31.97 & 38.32\\
$9000$ & 130.66 & 36.01 & 68.92 & 90.55\\
\hline
\end{tabular}}
\label{table_lambda}
\end{table}
\subsection{Variants}
\label{subsec:var_proposed}
This section describes four different ways to further improve our method's practical performance with respect to both accuracy and speed.
\vspace{0.05in}
\\
{\textit{Data splitting:}} If the sample size is considered large, we can split the given data sequence uniformly into smaller parts (windows) to which our method will be applied. In practical implementations, 
we split the data sequence only if $T > 2000$; for smaller values of $T$ there are no significant differences in the execution times of NPID and its window-based variant.
\vspace{0.05in}
\\
{\textit{Sparse grid:}} For practical applications, instead of creating $T$ binary sequences by using every empirical quantile (see the pseudocode for our algorithm), one could take $Q$, equally spaced, quantile values $l_j, j = 1,\ldots, Q$ within the interval $[X_{[1]}, X_{[T]}]$, in order to reduce computation but still cover the range of the data. For example, for the sequence $\left\lbrace X_t\right\rbrace_{t=1,\ldots,T}$ with $R = X_{[T]} - X_{[1]}$, we could take
\begin{equation}
\label{horizontal100}
l_j = X_{[1]} + \frac{jR}{Q+1}, \quad j = 1,\ldots,Q.
\end{equation}
The loss in accuracy, caused by avoiding the use of all observations, and instead creating a more sparse grid of values, has been shown through an extensive simulation study to be negligible; most of the times the result was exactly the same. On the other hand, the grid method was approximately $T/Q$ times faster.
\vspace{0.05in}
\\
{\textit{Rescaling the CUSUM values:}} In NPID, we calculate CUSUMs based on $B_{t}(X_i) = \mathbbm{1}_{\left\lbrace X_t \leq X_i\right\rbrace}$. $\left\lbrace B_{t}(X_i)\right\rbrace_{t=1,\ldots,T}$ will mostly equal zero for low-ranked data points on input (such as $X_{[1]}$), and vice versa. Therefore, to bring the different sequences $\left\lbrace B_{t}(X_i)\right\rbrace_{t=1,2,\ldots,T}$ on a similar scale, we can rescale the contrast function values by their estimated standard deviations, $\sqrt{\hat{p}(1-\hat{p})}$, where $\hat{p}$ is the sample mean of the values of $B_{t}(X_i)$ for $t \in \left\lbrace 1,\ldots,T\right\rbrace$. On the other hand, if $\hat{p}$ is close to zero or one, then the division by $\sqrt{\hat{p}(1 - \hat{p})}$ can lead to spuriously large contrast function values and therefore, to overdetection issues. Therefore, in cases in which $\hat{p} < 0.1$ or $\hat{p} > 0.9$, we take $\sqrt{\hat{p}(1 - \hat{p})} = 0.3$, which is the value obtained when $\hat{p} \in \left\lbrace 0.1, 0.9\right\rbrace$.
\vspace{0.05in}
\\
{\textit{Restarting post-detection:}} In practice, instead of starting from the end-point (or start-point) of the right-expanding (or left-expanding) interval in which a detection occurred, the algorithm offers the option of restarting from the estimated change-point location. This increases the probability of type-I errors (in particular increasing the risk of double detection) but reduces the probability of type-II errors (i.e. reduces the chances of missing true change-points). The speed of the method is not substantially affected.
\section{Simulations}
\label{sec:simulation}
In this section, we provide a comprehensive simulation study on the performance of NPID against state-of-the-art methods in the non-parametric change-point detection literature. Table \ref{table_methods} shows the competitors. The multiscale method of \cite{Vanegas2021} is not included in our comparative simulation study because it is valid for a given quantile level, rather than their combination.

NMCD is implemented through the \textsf{R} package \textbf{changepoint.np}. To our knowledge, there is no {\textsf{R}} package for the NBS and NWBS methods described in \cite{Padilla_et_al_2019} and for our simulations, we used the {\textsf{R}} code available at \url{https://github.com/hernanmp/NWBS}.
\begin{table}
\centering
\caption{The competing methods used in the simulation study}
\begin{tabular}{|l|l|l|}
\cline{1-3}
 & & \\
Method notation & Reference & \textsf{R} package\\
\hline
ECP & \cite{Matteson_James_2014} & \textbf{ecp}\\
NMCD & \cite{Zouetal2014} & \textbf{changepoint.np}\\
NBS  & \cite{Padilla_et_al_2019} & --\\
NWBS  & \cite{Padilla_et_al_2019} & --\\
CPM & \cite{Ross_CPM} & \textbf{cpm}\\
\hline
\end{tabular}
\label{table_methods}
\end{table}
For ECP, we present the results for the E-Divisive method (as developed in \cite{Matteson_James_2014}), which performs a hierarchical divisive estimation of multiple change points. The change-points are estimated by iteratively applying a procedure for locating a single change point. While we keep ECP in our simulation study, we remark that this procedure is not classically non-parametric in the sense that it relies on moment assumptions; more specifically, it assumes that observations are independent with finite $\alpha$th absolute moments, for some $\alpha \in (0,2]$ \citep{MJ_ecp}. Therefore, unlike many other non-parametric approaches including ours, it is not invariant with respect to monotone transformations of the data.

The {\textsf{R}} package {\textbf{cpm}} contains implementation of several different models, both parametric
and non-parametric, for use on univariate streams. We give results for the Kolmogorov-Smirnov and the Cramer-von Mises statistics used in the {\textbf{cpm}} {\textsf{R}} package, suitable for continuous data. There is no function in the {\textbf{cpm}} {\textsf{R}} package for discrete random variables. Therefore, the method is excluded when our simulation models involve discrete distributions. Furthermore, in the \textbf{cpm} package, the threshold used for detection is decided through the average run length (ARL) until a false positive occurs. In our simulations, we give results for ${\rm ARL}=500$ (the default value) and, whenever the signal length, $l_s$, is greater than 500, results are also given for ${\rm ARL} = 1000\lceil l_s/1000 \rceil$. With $A$ being the value of the ARL used, the notation is CPMKS$A$ and CPMCvM$A$, respectively. For the remaining NMCD, NBS, and NWBS methods, we use the default hyperparameter values.

With respect to our method, results are presented when NPID is combined with the information criterion stopping rule discussed in Section \ref{subsec:sSIC}. The $L_\infty$ mean-dominant norm is employed and the obtained contrast function values are rescaled as explained in Section \ref{subsec:var_proposed}. We highlight, though, that the performance of our method is robust to the choice of the mean-dominant norm used.

All the signals are fully specified in Appendix \ref{sec:App_A}. Figure \ref{figure_signals} shows examples of the data generated by models (MM\_Gauss), (MM\_Gauss\_tr), (MV\_Gauss), (D1), (MD1), and (MD3).
\begin{figure}
\centering
\includegraphics[width=13cm,height=6.5cm]{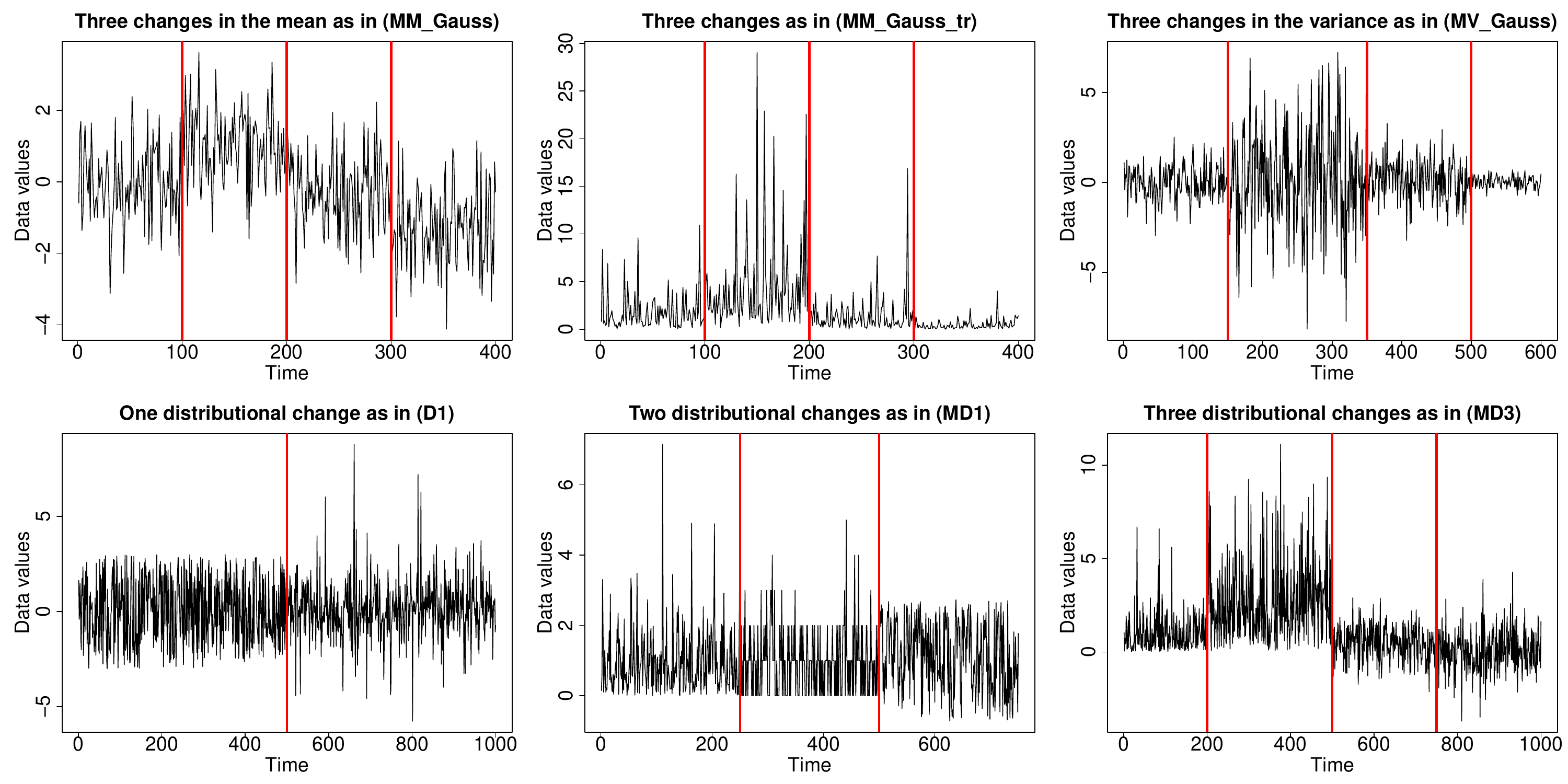}
\caption{Examples of data sequences, used in simulations. The change-point locations are indicated with red, vertical, solid lines.} 
\label{figure_signals}
\end{figure}
We ran 100 replications for each signal and the frequency distribution of $\hat{N} - N$ is shown for each method. The methods with the highest empirical frequency of $\hat{N} - N = 0$ and those within $10\%$ off the highest are given in bold. As a measure of the accuracy of the detected locations, we provide the average values (out of the 100 replications) for the scaled Hausdorff distance, $$d_H = n_s^{-1}\max\left\lbrace \max_j\min_k\left|r_j-\hat{r}_k\right|,\max_k\min_j\left|r_j-\hat{r}_k\right|\right\rbrace,$$ where $n_s$ is the length of the largest segment. In the first example of Table \ref{NC1}, which is a signal without any change-points, $d_H$ is not given since it is non-informative in no change-point scenarios. The average computation time for all methods is also provided. The \textsf{R} code to replicate the simulation study explained in this section can be found at \url{https://github.com/Anastasiou-Andreas/NPID}. Tables \ref{NC1}--\ref{MV1toMD3} summarize the results.

The results in Tables \ref{NC1}--\ref{MV1toMD3} show that NPID consistently, among all scenarios covered in the models, performs accurately regarding both the number and the locations of the estimated change-points. To be more precise, NPID is always among the top-performing methods when considering accuracy in any aspect (estimated number of change-points and estimated change-point locations); in the vast majority of cases, it is the best method overall. We highlight that, as shown by the results for the models (MM\_Gauss\_tr) and (MM\_Pois\_tr), NPID's performance remains unaffected under transformations of the data; this is expected since our method (and some of the competitors) rely on the ranks of the data only for the detection of the change-points. Furthermore, we notice a significant gap in the performance between NPID and its competitors in all models that undergo variance changes, as well as in models (MD1) and (MD3) that cover the case of more general distributional changes. Regarding the practical computational cost of NPID, the results in Tables \ref{NC1}-\ref{MV1toMD3} indicate that our proposed method can accurately analyze long signals (length in the range of thousands) in seconds.

Regarding the competing methods, CPM, NMCD, NBS, and NWBS appear to struggle to detect the change-points correctly, especially in the presence of changes in the variance. The ECP algorithm exhibits very good performance in the at-most-one change-point scenarios. While it performs well in some models with multiple change-points (mainly those involving changes in the mean), this behavior is not consistent; the method seems to struggle (as is the case for the rest of the competitors too) in the presence of variance changes.

We now focus on a comparison of the behavior of the competitors in (MM\_Gauss) and (MM\_Pois) to that under (MM\_Gauss\_tr) and (MM\_Pois\_tr); in the latter two models the exponential function was applied to the data sequences obtained by the former two models. We highlight that NPID remained unaffected under such transformations of the data. The algorithms NMCD, NBS, NWBS, and CPM seem to also remain unaffected by such transformations of the data. In contrast, ECP exhibits very accurate behavior in (MM\_Gauss) and (MM\_Pois), but its performance vastly declines when the exponential function is applied to the data; see the results for models (MM\_Gauss\_tr) and (MM\_Pois\_tr) in Table \ref{MM1MM2}. ECP works under the assumption that observations have finite $\alpha$th absolute moments for some $\alpha \in (0,2]$ \citep{Matteson_James_2014}, and transformations will inevitably affect the performance of methods that rely on moment assumptions.

Based on the aforementioned comparison of the behaviour of NPID with the best competitors in the literature, we can conclude that our method has the best behaviour overall. NPID is an accurate, reliable, and quick method for non-parametric change-point detection.
\begin{table}
\centering
\caption{Distribution of $\hat{N} - N$ over 100 simulated data sequences from the structures (NC), (M1), (V1), and (D1). The average $d_H$ and computation time are also given}
{\small{
\begin{tabular}{|l|l|l|l|l|l|l|l|}
\cline{1-8}
 &  & \multicolumn{4}{|c|}{} & & \\
 &  & \multicolumn{4}{|c|}{$\hat{N} - N$} & & \\
Method  & Model & $-1$ & 0 & $1$ & $\geq 2$ & $d_H$ & Time (s)\\
\hline
{\textbf{NPID}} &  & - & {\textbf{97}} & 3 & 0 & - & 0.84\\
CPMKS500 &  & - & 28 & 25 & 47 & - & 0.07\\
CPMCvM500 &  & - & 34 & 18 & 48 & - & 0.07\\
{\textbf{ECP}} & (NC) & - & {\textbf{94}} & 4 & 2 & - & 1.41\\
NMCD &  & - & 59 & 6 & 35 & - & 0.05\\
{\textbf{NBS}} &  & - & {\textbf{100}} & 0 & 0 & - &  0.44\\
{\textbf{NWBS}} &  & - &{\textbf{97}} & 0 & 3 & - & 9.73\\
\hline
\hline
{\textbf{NPID}}  & & 0 & {\textbf{94}} & 5 & 1 & 0.344 & 0.15\\
CPMKS500 & & 0 & 71 & 18 & 11 & 0.179 & 0.01\\
CPMCvM500 & & 0 & 69 & 17 & 14 & 0.165 & 0.01\\
{\textbf{ECP}} & (M1) & 0 & {\textbf{96}} & 4 & 0 & 0.045 & 0.31\\
NMCD & & 0 & 70 & 19 & 11 & 0.174 & 0.01\\
NBS  & & 19 & 70 & 3 & 8 & 0.303 & 0.09\\
NWBS & & 22 & 69 & 4 & 5 & 0.314 & 2.28\\
\hline
\hline
{\textbf{NPID}} & & 1 & {\textbf{86}} & 5 & 9 & 0.123 & 0.61\\
CPMKS500 & & 1 & 60 & 18 & 21 & 0.443 & 0.01\\
CPMCvM500 &  & 5 & 49 & 25 & 21 & 0.424 & 0.01\\
{\textbf{ECP}} & (V1)  & 12 & {\textbf{83}} & 3 & 2 & 0.450 & 0.52\\
NMCD &  & 0 & 44 & 23 & 33 & 0.384 & 0.01\\
NBS  &  & 93 & 1 & 1 & 5 & 0.978 & 0.15\\
NWBS  &  & 90 & 4 & 1 & 5 & 0.958 & 3.92\\
\hline
\hline
{\textbf{NPID}}  & & 2 & {\textbf{94}} & 2 & 2 & 0.075 & 2.88\\
CPMKS500 & & 0 & 7 & 11 & 82 & 0.637 & 0.17\\
CPMCvM500 & & 0 & 9 & 12 & 79 & 0.576 & 0.19\\
CPMKS1000 & & 0 & 27 & 17 & 56 & 0.442 & 0.25\\
CPMCvM1000 & & 0 & 30 & 16 & 54 & 0.392 & 0.26\\
{\textbf{ECP}} & (D1) & 0 & {\textbf{96}} & 3 & 1 & 0.036 & 7.68\\
NMCD & & 0 & 27 & 13 & 60 & 0.439 & 0.12\\
NBS  & & 53 & 41 & 6 & 0 & 0.579 & 0.96\\
NWBS & & 61 & 30 & 4 & 5 & 0.678 & 22.16\\
\hline
\end{tabular}}}
\label{NC1}
\end{table}
\begin{table}
\centering
\caption{Distribution of $\hat{N} - N$ over 100 simulated data sequences from the structures (MM1), (MM2), (MV\_Gauss), and (MV\_Gauss2). The average $d_H$ and computation time are also given}
{\small{
\begin{tabular}{|l|l|l|l|l|l|l|l|l|}
\cline{1-9}
 & & \multicolumn{5}{|c|}{} & & \\
 & & \multicolumn{5}{|c|}{$\hat{N} - N$} & & \\
Method & Model & $\leq -2$ & $-1$ & $0$ & $1$ &$\geq 2$ & $d_H$ & Time (s)\\
\hline
{\textbf{NPID}} & & 0 & 0 & {\textbf{97}} & 3 & 0 & 0.090 & 0.34\\
CPMKS500 & & 0 & 0 & 45 & 32 & 23 & 0.291 & 0.01\\
CPMCvM500 & & 0 & 0 & 62 & 17 & 21 & 0.225 & 0.01\\
{\textbf{ECP}} & (MM\_Gauss) & 0 & 0 & {\textbf{92}} & 6 & 2 & 0.108 & 2.21\\
NMCD & & 0 & 0 & 83 & 13 & 4 & 0.106 & 0.02\\
NBS  & & 36 & 6 & 43 & 7 & 8 & 0.914 & 0.23\\
NWBS & & 13 & 18 & 54 & 4 & 11 & 0.523 & 6.21\\
\hline
\hline
{\textbf{NPID}} & & 0 & 0 & {\textbf{97}} & 3 & 0 & 0.090 & 0.30\\
CPMKS500 & & 0 & 0 & 37 & 29 & 34 & 0.303 & 0.01\\
CPMCvM500 & & 0 & 0 & 50 & 20 & 30 & 0.265 & 0.01\\
ECP & (MM\_Gauss\_tr) & 0 & 32 & 59 & 9 & 0 & 0.467 & 2.01\\
NMCD & & 0 & 0 & 82 & 14 & 4 & 0.107 & 0.02\\
NBS  & & 41 & 10 & 36 & 7 & 6 & 1.027 & 0.22\\
NWBS & & 7 & 20 & 57 & 7 & 9 & 0.480 & 5.63\\
\hline
\hline
{\textbf{NPID}} & & 9 & 9 & {\textbf{81}} & 1 & 0 & 0.347 & 0.28\\
CPMKS500 & & 0 & 0 & 30 & 37 & 33 & 0.403 & 0.01\\
CPMCvM500 & & 0 & 0 & 47 & 30 & 23 & 0.295 & 0.01\\
{\textbf{ECP}} & (MM\_Student\_$t_3$) & 0 & 1 & {\textbf{84}} & 13 & 2 & 0.183 & 2.50\\
NMCD & & 0 & 0 & 47 & 30 & 23 & 0.321 & 0.02\\
NBS  & & 48 & 6 & 26 & 12 & 8 & 1.121 & 0.24\\
NWBS & & 21 & 22 & 42 & 10 & 5 & 0.619 & 6.40\\
\hline
\hline
{\textbf{NPID}} &  & 0 & 0 & {\textbf{97}} & 3 & 0 & 0.085 & 6.25\\
CPMKS500 &  & 0 & 0 & 2 & 3 & 95 & 0.452 & 0.01\\
CPMCvM500 & &  0 & 0 & 1 & 7 & 92 & 0.446 & 0.01\\
CPMKS2000 &  & 0 & 0 & 22 & 36 & 42 & 0.266 & 0.02\\
CPMCvM2000 & &  0 & 0 & 25 & 33 & 42 & 0.274 & 0.01\\
{\textbf{ECP}} & (MM\_Gauss2) & 0 & 0 & {\textbf{91}} & 9 & 0 & 0.089 & 113.25\\
NMCD &  & 0 & 0 & 72 & 24 & 4 & 0.089 & 0.05\\
NBS &  & 22 & 13 & 43 & 22 & 0 & 0.489 & 0.21\\
{\textbf{NWBS}} &  & 0 & 2 & {\textbf{96}} & 2 & 0 & 0.166 & 46.69\\
\hline
\hline
{\textbf{NPID}} &  & 0 & 9 & {\textbf{91}} & 0 & 0 & 0.131 & 0.02\\
{\textbf{ECP}} & (MM\_Pois) & 0 & 0 & {\textbf{84}} & 15 & 1 & 0.114 & 2.10\\
{\textbf{NMCD}} &  & 0 & 0 & {\textbf{89}} & 9 & 2 & 0.066 & 0.02\\
NBS &  & 0 & 43 & 54 & 2 & 1 & 0.489 & 0.21\\
NWBS &  & 0 & 50 & 45 & 4 & 1 & 0.551 & 5.71\\
\hline
\hline
{\textbf{NPID}} &  & 0 & 9 & {\textbf{91}} & 0 & 0 & 0.131 & 0.02\\
ECP & (MM\_Pois\_tr) & 1 & 67 & 28 & 4 & 0 & 0.721 & 1.83\\
{\textbf{NMCD}} &  & 0 & 0 & {\textbf{86}} & 11 & 3 & 0.098 & 0.02\\
NBS &  & 0 & 39 & 56 & 3 & 2 & 0.458 & 0.22\\
NWBS &  & 0 & 48 & 48 & 1 & 3 & 0.531 & 6.05\\
\hline
\end{tabular}}}
\label{MM1MM2}
\end{table}
\begin{table}
\centering
\caption{Distribution of $\hat{N} - N$ over 100 simulated data sequences from the structures (MV\_Gauss), (MV\_Gauss2), (MD1), (MD2), and (MD3). The average $d_H$ and computation time are also given}
{\small{
\begin{tabular}{|l|l|l|l|l|l|l|l|l|}
\cline{1-9}
 & & \multicolumn{5}{|c|}{} & & \\
 & & \multicolumn{5}{|c|}{$\hat{N} - N$} & & \\
Method & Model & $\leq -2$ & $-1$ & $0$ & $1$ &$\geq 2$ & $d_H$ & Time (s)\\
\hline
{\textbf{NPID}} &  & 0 & 1 & {\textbf{87}} & 9 & 3 & 0.102 & 0.79\\
CPMKS500 &  & 0 & 0 & 19 & 24 & 57 & 0.278 & 0.03\\
CPMCvM500 &  & 0 & 0 & 21 & 26 & 53 & 0.277 & 0.02\\
CPMKS1000 &  & 0 & 1 & 37 & 27 & 35 & 0.223 & 0.03\\
CPMCvM1000 &  & 0 & 0 & 47 & 24 & 29 & 0.194 & 0.03\\
ECP & (MV\_Gauss) & 0 & 78 & 20 & 2 & 0 & 0.631 & 3.84\\
NMCD &  & 0 & 0 & 33 & 18 & 49 & 0.197 & 0.03\\
NBS &  & 87 & 3 & 4 & 4 & 2 & 1.574 & 0.40\\
NWBS &  & 48 & 20 & 19 & 9 & 4 & 1.181 & 10.86\\
\hline
\hline
{\textbf{NPID}} &  & 0 & 3 & {\textbf{85}} & 7 & 5 & 0.171 & 2.32\\
CPMKS500 &  & 0 & 0 & 5 & 7 & 88 & 0.424 & 0.05\\
CPMCvM500 &  & 0 & 2 & 12 & 19 & 67 & 0.407 & 0.05\\
CPMKS1000 &  & 0 & 1 & 16 & 28 & 55 & 0.374 & 0.07\\
CPMCvM1000 &  & 1 & 3 & 31 & 21 & 44 & 0.391 & 0.06\\
ECP & (MV\_Gauss2) & 71 & 23 & 5 & 0 & 1 & 0.736 & 13.60\\
NMCD &  & 0 & 0 & 26 & 18 & 56 & 0.266 & 0.07\\
NBS &  & 87 & 5 & 4 & 1 & 3 & 2.934 & 0.81\\
NWBS &  & 88 & 2 & 4 & 2 & 4 & 1.513 & 21.55\\
\hline
\hline
{\textbf{NPID}} &  & 0 & 3 & {\textbf{97}} & 0 & 0 & 0.070 & 0.84\\
ECP & (MD1) & 17 & 52 & 25 & 6 & 0 & 0.919 & 5.46\\
NMCD &  & 0 & 0 & 29 & 36 & 35 & 0.312 & 0.07\\
NBS &  & 0 & 51 & 41 & 7 & 1 & 0.569 & 0.65\\
NWBS &  & 2 & 60 & 31 & 3 & 4 & 0.684 & 17.45\\
\hline
\hline
{\textbf{NPID}} &  & 0 & 0 & {\textbf{98}} & 2 & 0 & 0.069 & 0.52\\
CPMKS500 &  & 0 & 0 & 23 & 36 & 51 & 0.347 & 0.01\\
CPMCvM500 &  & 0 & 0 & 34 & 29 & 37 & 0.301 & 0.01\\
{\textbf{ECP}} & (MD2) & 0 & 0 & {\textbf{93}} & 6 & 1 & 0.080 & 3.17\\
NMCD &  & 0 & 0 & 52 & 24 & 24 & 0.150 & 0.02\\
NBS &  & 16 & 4 & 62 & 4 & 14 & 0.486 & 0.30\\
NWBS &  & 9 & 12 & 65 & 6 & 8 & 0.377 & 8.11\\
\hline
\hline
{\textbf{NPID}} &  & 0 & 11 & {\textbf{86}} & 3 & 0 & 0.173 & 2.20\\
CPMKS500 &  & 0 & 0 & 8 & 14 & 78 & 0.418 & 0.07\\
CPMCvM500 &  & 0 & 0 & 13 & 12 & 75 & 0.367 & 0.05\\
CPMKS1000 &  & 0 & 1 & 25 & 22 & 52 & 0.323 & 0.08\\
CPMCvM1000 &  & 0 & 0 & 31 & 22 & 47 & 0.288 & 0.06\\
ECP & (MD3) & 0 & 34 & 63 & 2 & 1 & 0.334 & 11.56\\
{\textbf{NMCD}} &  & 0 & 0 & {\textbf{77}} & 19 & 4 & 0.092 & 0.09\\
NBS &  & 0 & 53 & 32 & 11 & 4 & 0.555 & 0.82\\
NWBS &  & 0 & 69 & 22 & 2 & 7 & 0.624 & 21.51\\
\hline
\end{tabular}}}
\label{MV1toMD3}
\end{table}
\section{Real data examples}
\label{sec:real_data}
\subsection{Micro-array data}
\label{subsec:microarray}
In this section, we investigate the performance of our method on micro-array data, which consists of individuals with a bladder tumour. The data set can be obtained from the {\textsf{R}} package {\bf{ecp}} \citep{MJ_ecp}, and has been previously analysed in the literature. More specifically, in \cite{Matteson_James_2014} the whole multivariate dataset is analysed, while in \cite{Padilla_et_al_2019}, non-parametric change-point detection is carried out for the first individual in the data set. 
The results for NPID regarding the first individual can be found in Figure \ref{fig:ACGH_only_us}. 
\begin{figure}
\centering
\includegraphics[width=12cm,height=5cm]{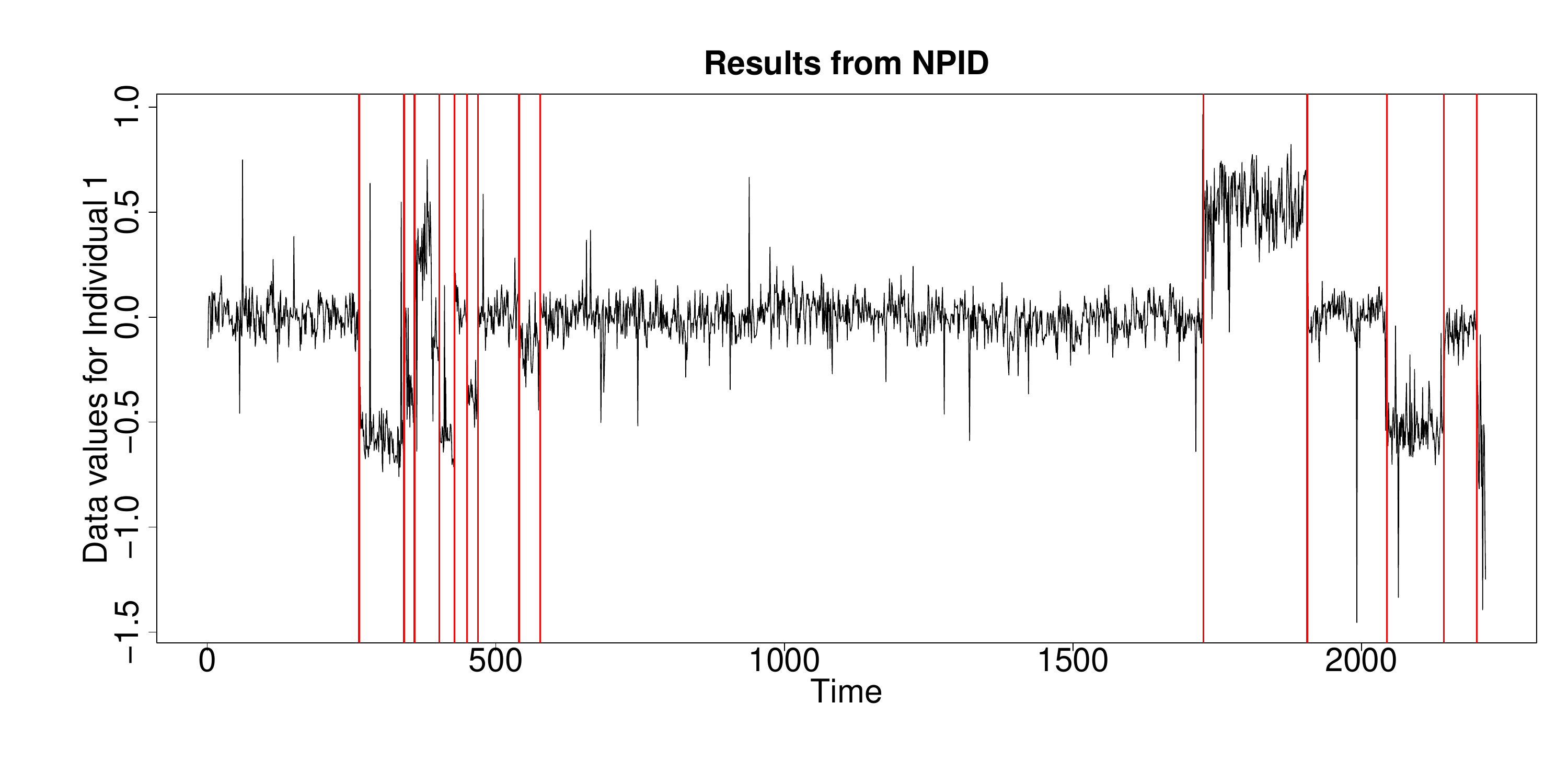}
\caption{Change-point detection for the first individual in the micro-array data set. The NPID-estimated change-point locations are marked with red solid vertical lines.}
\label{fig:ACGH_only_us}
\end{figure}
Our method seems to capture the important movements (that are mainly in the mean) in the micro-array data for this specific individual and the obtained results are similar to those presented in \cite{Padilla_et_al_2019}, with a deviation regarding the estimated number of change-points (14 for NPID compared to 16 for NWBS). We further investigate the performance of the methods for other individuals in the data set, also including ECP and NMCD in the analysis. The results for Individuals 4 and 39 can be found in Figures \ref{fig:ind_4} and \ref{fig:ind_39}, respectively. ECP and NMCD may be suspected of overestimation in both subjects. NWPS appears to slightly overestimate change-points for Individual 4 and underestimate change-points for Individual 39. NPID exhibits (at least visually) good performance in both subjects.
\begin{figure}
\centering
\includegraphics[width=14cm,height=6cm]{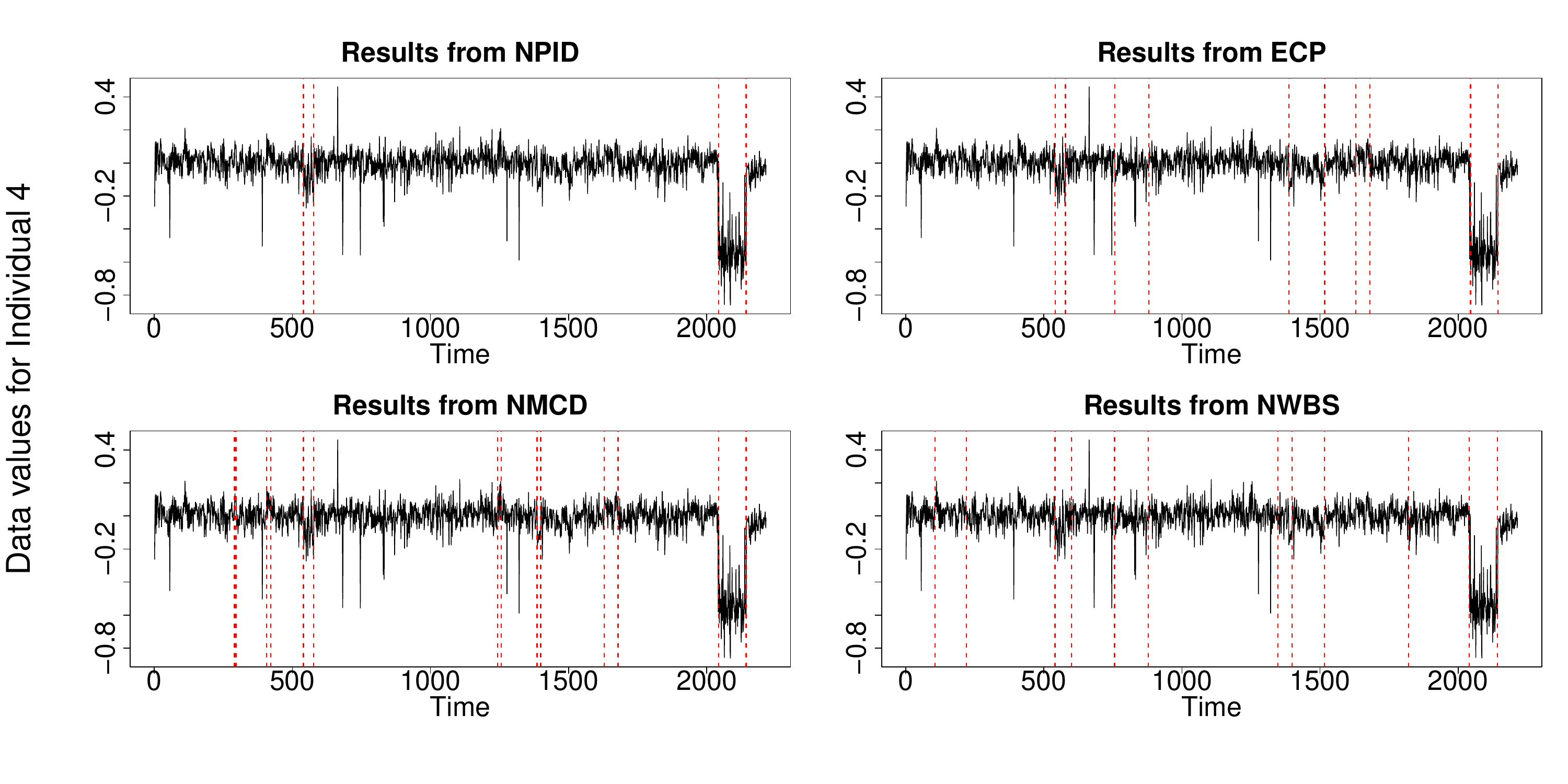}
\vspace{-0.15in}
\caption{Change-point detection for Individual 4 in the micro-array data set. The estimated change-point locations are given with dashed vertical lines. Top row: The results for the NPID and ECP methods. Bottom row: The results for the NMCD and NWBS methods.}
\label{fig:ind_4}
\end{figure}
\begin{figure}
\centering
\includegraphics[width=14cm,height=6cm]{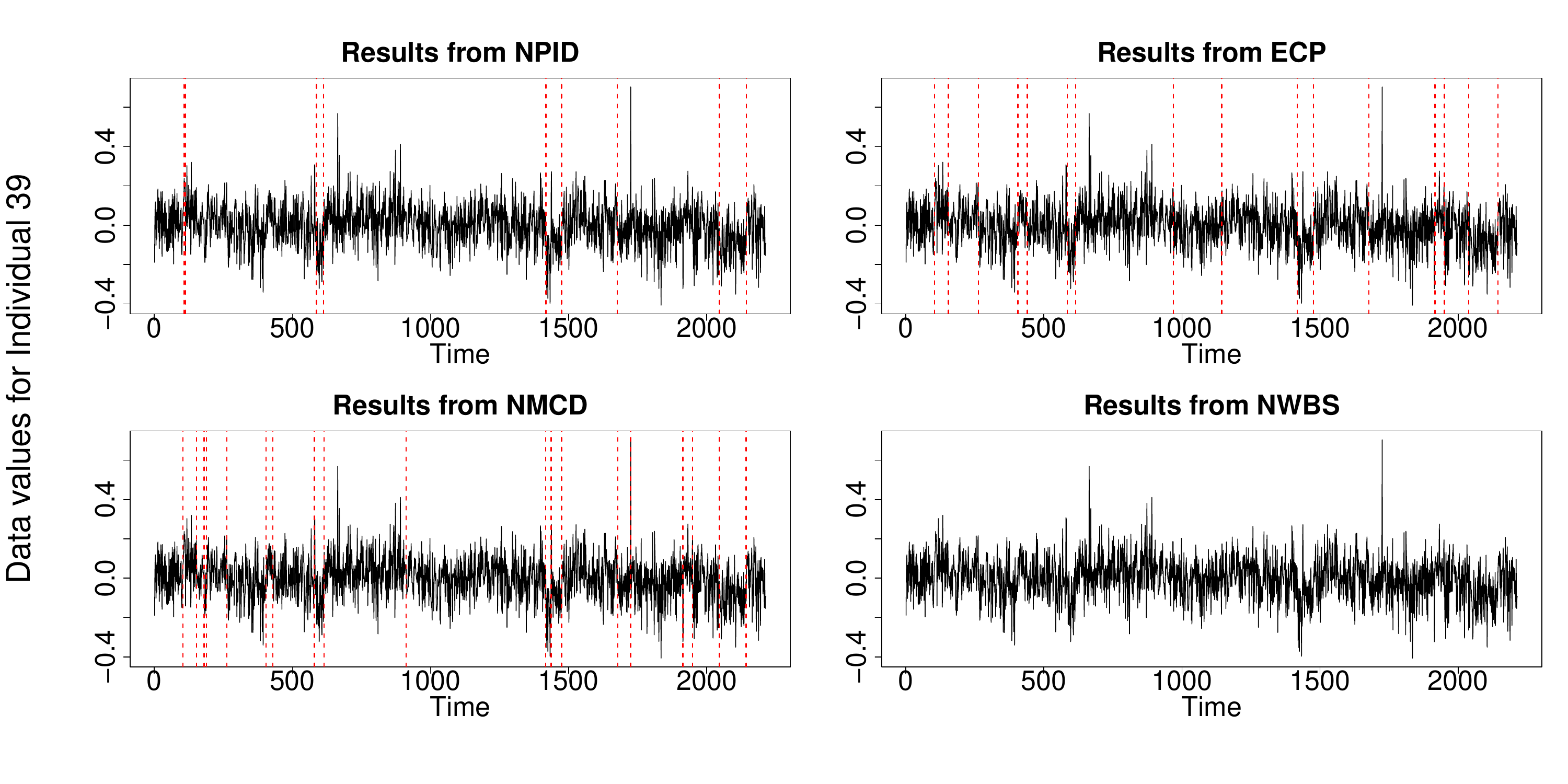}
\vspace{-0.15in}
\caption{Change-point detection for Individual 39 in the micro-array data set. The estimated change-point locations are given with dashed vertical lines. Top row: The results for the NPID and ECP methods. Bottom row: The results for the NMCD and NWBS methods.}
\label{fig:ind_39}
\end{figure}
In real-data examples, it is a challenge to decide which of the methods give the best
segmentation. However, the NPID solution path algorithm (Section \ref{subsec:sSIC}) can be used to obtain a range of different segmentation models providing users with the flexibility to choose according to their preferred model selection criterion. In Figure \ref{fig:solution_path}, we show the solution path for Individual 4 with the 12 most prominent change-points.
\begin{figure}
\centering
\includegraphics[width=12cm,height=5cm]{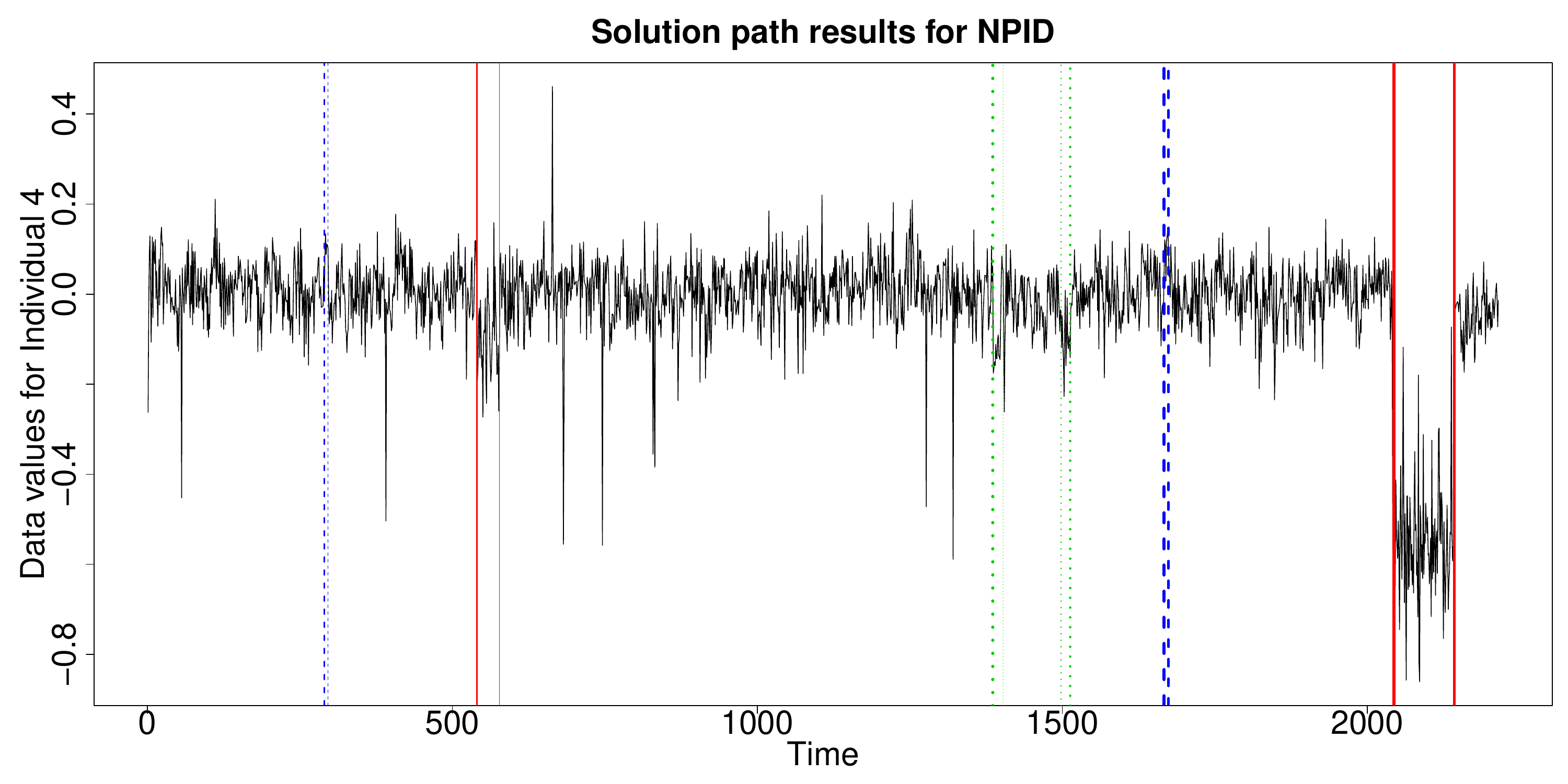}
\caption{Solution path algorithm results for Individual 4 in the micro-array data set. The 12 most important locations according to the solution path algorithm are given. Three subcategories of importance are created. The four most important locations are given with red solid lines, while the elements in positions 5-8 and 9-12 of the solution path vector are presented with blue dashed and green dotted lines, respectively. Within each subcategory, the thicker the line, the more important the estimated location.}
\label{fig:solution_path}
\end{figure}
\subsection{Major American stock indices.}
\label{subsec:DJIA}
We first analyze the Dow Jones Industrial Average (DJIA) index daily log returns from the 14\textsuperscript{th} of April 2020 until the 17\textsuperscript{th} of April 2025. (All data in this section is taken from \url{https://fred.stlouisfed.org/series/DJIA}.) Figure \ref{fig:DJIA_5_all} shows the results for NPID, as well as for the ECP, NMCD, and NWBS algorithms.
NPID, ECP and NMCD exhibit a similar behavior detecting 4, 3, and 5 change-points, respectively, while NWBS does not detect any change-points. It is interesting to see that NPID and NMCD capture, through the last estimated change-point, an important increase in the volatility having taken place in April 2025. ECP and NWBS appear insufficiently sensitive in this instance.

Further, we consider the S\&P 500 and Nasdaq composite indices over the same time period; the results are provided in Figures \ref{fig:SP_500_all} and \ref{fig:Nasdaq_all}, respectively. Regarding NPID, NMCD, and NWBS, the results for the three market indexes are very similar. NPID and NMCD capture important distributional changes including the recent volatility change in April 2025; NWBS does not detect any change-points in any index. ECP exhibits similar behaviour for the S\&P 500 and Nasdaq composite indexes, while it detects one more change-point for the DJIA index.
\begin{figure}
\centering
\includegraphics[width=14cm,height=6cm]{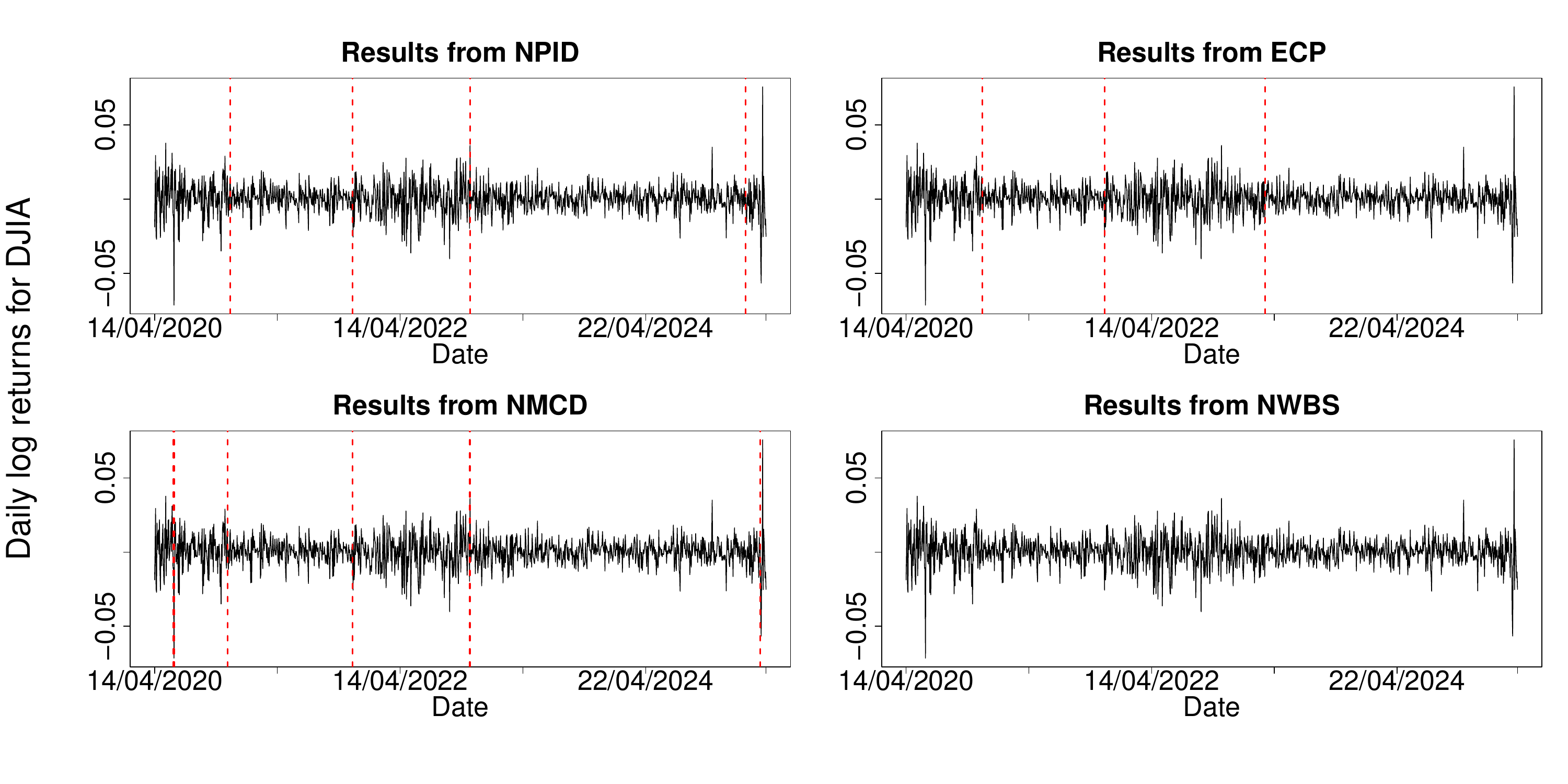}
\vspace{-0.1in}
\caption{Change-point detection for the DJIA index daily log returns. Top row: The results for the NPID and ECP methods. Bottom row: The results for the NMCD and NWBS methods.}
\label{fig:DJIA_5_all}
\end{figure}
\begin{figure}
\centering
\includegraphics[width=14cm,height=6cm]{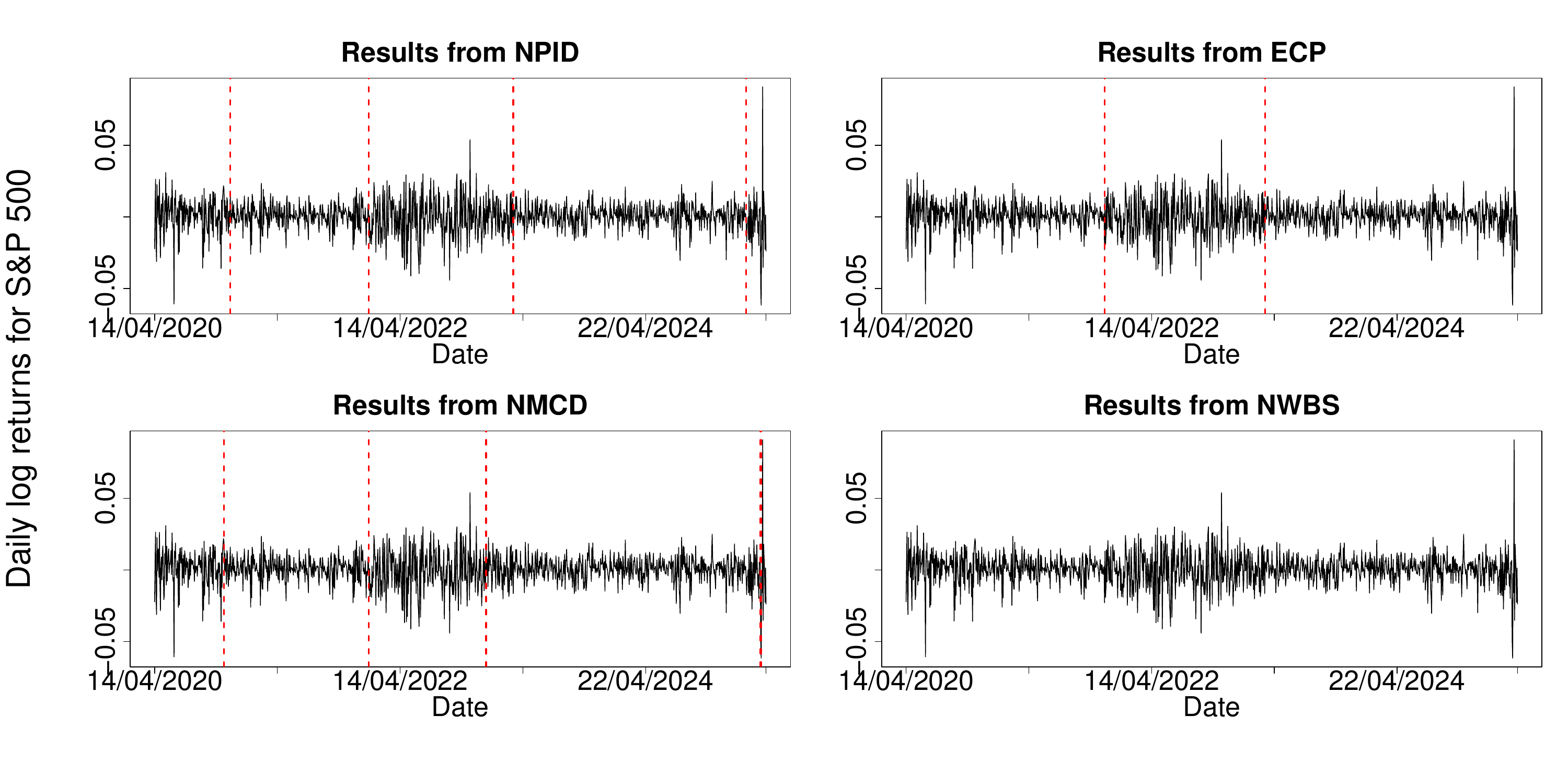}
\vspace{-0.1in}
\caption{Change-point detection for the S\&P 500 index daily log returns. Top row: The results for the NPID and ECP methods. Bottom row: The results for the NMCD and NWBS methods.}
\label{fig:SP_500_all}
\end{figure}
\begin{figure}
\centering
\includegraphics[width=14cm,height=6cm]{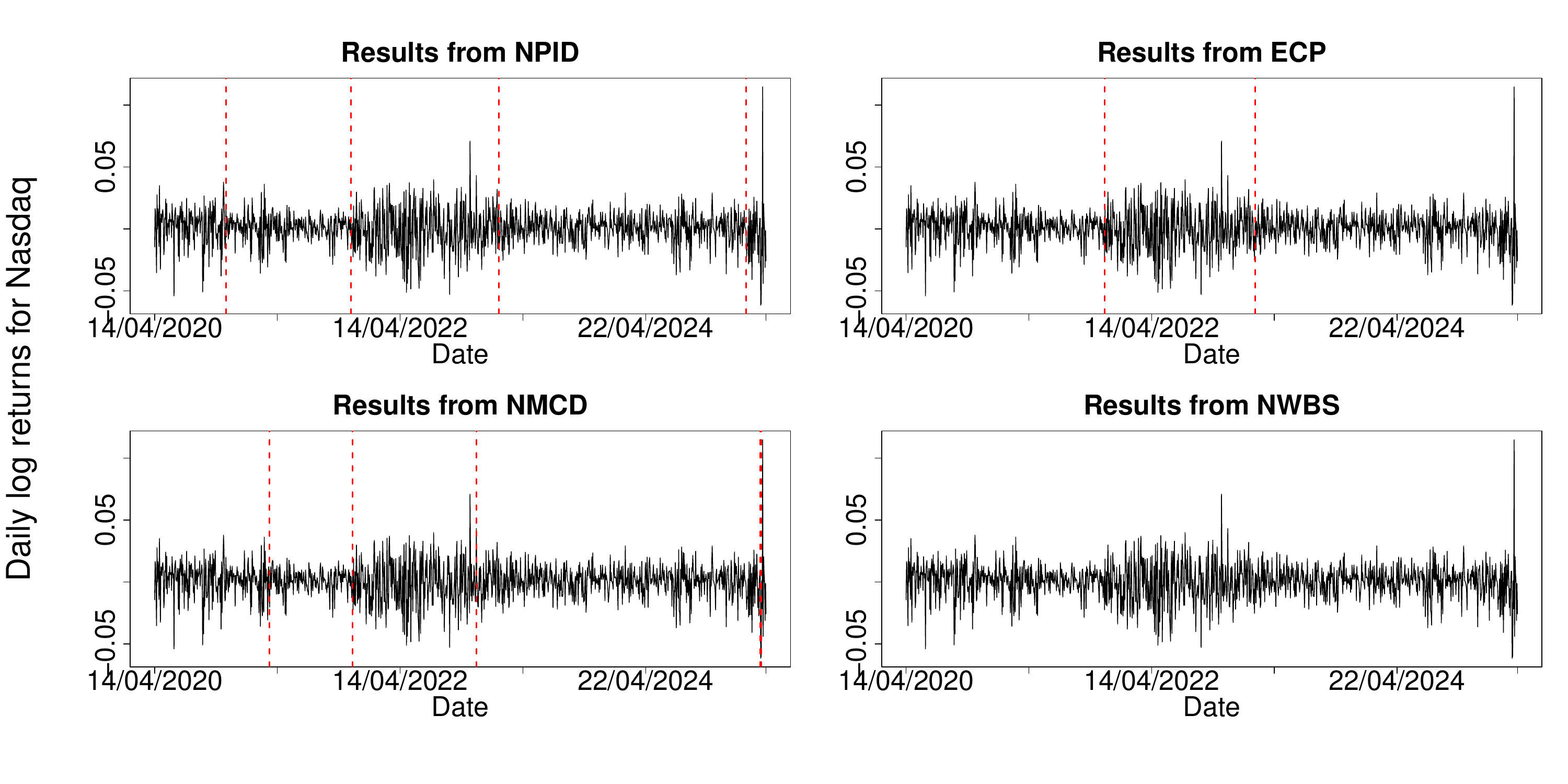}
\vspace{-0.1in}
\caption{Change-point detection for the Nasdaq composite index daily log returns. Top row: The results for the NPID and ECP methods. Bottom row: The results for the NMCD and NWBS methods.}
\label{fig:Nasdaq_all}
\end{figure}
\subsection{The COVID-19 outbreak in the UK}
The performance of NPID is investigated on data from the COVID-19 pandemic; more specifically, we attempt to find changes in the percentage change in COVID-19 seven-day case rates in the UK. The data concern the period from the beginning of September 2021 until the $19^{th}$ of February 2022 and they are available from \url{https://coronavirus.data.gov.uk}. We again look for any changes in the distribution of the data. Figure \ref{fig:Covid-19_percentage} shows the results for the NPID method, as well as for the ECP, NMCD, and NWBS methods.
\begin{figure}
\centering
\includegraphics[width=14cm,height=6cm]{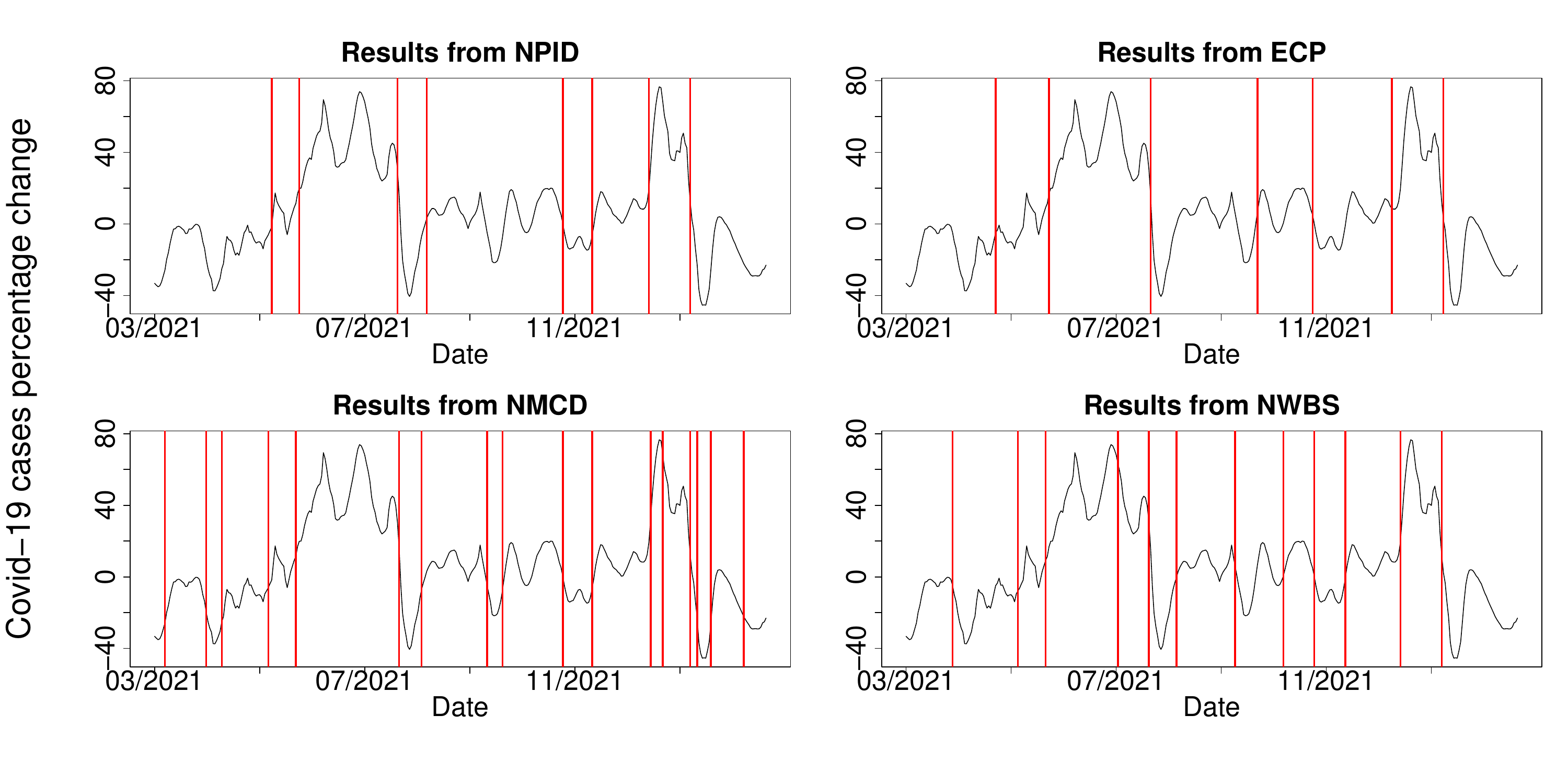}
\vspace{-0.1in}
\caption{Change-point detection for the percentage change in COVID-19 seven-day case rates in the UK. The estimated change-point locations are given with solid vertical lines. Top row: The results for the NPID and ECP methods. Bottom row: The results for the NMCD and NWBS methods.}
\label{fig:Covid-19_percentage}
\end{figure}
The estimated numbers of change-points for NPID, ECP, NMCD, and NWBS are 8, 7, 17, and 12, respectively. While the changes returned by NPID and ECP are relatively easy to justify visually, NWBS, and even more so, NMCD, can be suspected of overestimating the number of change-points here. However, we have to bear in mind that we work under model mis-specification in this example, as the data points are clearly not serially independent.

\section{Discussion}
\label{sec:conclusion}

We highlight that the method can be extended to variables with values in an arbitrary metric space, $\mathcal{E}$, as long as the distributional difference is detectable on a Vapnik-Cervonenkis (VC) class. The way to achieve this is through results appearing in \cite{Dumbgen1991} for the single change-point scenario. More specifically, one can choose a seminorm $N_T(\cdot)$, deterministic or random, on the space $\mathcal{S}$ of all finite signed measures on $\mathcal{E}$ and compute the distributional difference based on this seminorm. Upon the assumptions that first, there is a VC class $\mathcal{D}$ of measurable subsets of $\mathcal{E}$, such that
\begin{equation}
\nonumber N_T(v) \leq \sup\{|v(D)|:D\in\mathcal{D}\}, \forall T \geq 2, \forall v\in \mathcal{S}
\end{equation}
and, second, that the probability of the distributional difference (measured on the seminorm $N_T(\cdot)$) being bounded away from zero goes to 1 as $T \to \infty$, one can show consistency of the estimated number and locations of the change-points, as in Theorem \ref{consistency_theorem}. For an example of a VC class of measurable subsets in $\mathbb{R}^p$ see p.1474 in \cite{Dumbgen1991}.

\begin{appendix}
\section{Models used in the simulation study}
\label{sec:App_A}
The characteristics of the data sequences $X_t$, which were used in the simulation study are given in the list below.
\begin{itemize}
\item[(NC)] {\textit{constant signal}}: length 500 with no change-points.

\item[(M1)] {\textit{mean}}: length 200 with one change-point at 100. The distribution changes from $\mathcal{N}(0,1)$ to $\mathcal{N}(1,1)$.

\item[(V1)] {\textit{variance}}: length 500 with one change-point at 250. The distribution changes from $\mathcal{N}(0,1)$ to $\mathcal{N}(0,4)$.

\item[(D1)] {\textit{distributional}}: length 1000 with one distributional change at 500; there are no changes in the first two moments. In the first segment the distribution is ${\rm Unif}(-3,3)$, and in the second one it is Student-$t_{3}$.

\item[(MM\_Gauss)] {\textit{multi\_mean\_Gaus}}: length 400 with three change-points at 100, 200, 300. The distribution in the four different segments is $\mathcal{N}(0,1)$, $\mathcal{N}(1,1)$, $\mathcal{N}(-0.2,1)$, and $\mathcal{N}(-1.3,1)$.

\item[(MM\_Gauss\_tr)] {\textit{multi\_mean\_Gauss\_tr}}: For this scenario, we transform the data sequences created from (MM\_Gauss) using the exponential function.

\item[(MM\_Student\_$t_3$)] {\textit{multi\_mean\_Student}}: length 400 with three changes in the mean at 100, 200, 300. The values for the piecewise-constant signal in the four segments are $0, 1, -0.2, -1.3$. On this signal, we add noise following the Student-$t_3$ distribution.

\item[(MM\_Gauss2)] {\textit{multi\_mean\_Gauss\_2}}: length 1600 and with 19 change-points at \linebreak $80, 160, \ldots , 1520$. There
are 20 segments. The distribution of the odd numbered segments is $\mathcal{N}(0, 1)$, while the
distribution of the even numbered ones is $\mathcal{N}(2, 1)$.

\item[(MM\_Pois)] {\textit{multi\_mean\_Pois:}} length 400 with three changes in the mean at 100, 200, 300. The values for the piecewise-constant signal in the four segments are $0, 1, -0.2, -1.3$. On this signal, we add noise following the Poisson(1) distribution.

\item[(MM\_Pois\_tr)] {\textit{multi\_mean\_Pois\_tr}}: For this scenario, we transform the data sequences created from (MM\_Pois) using the exponential function.

\item[(MV\_Gauss)] {\textit{multi\_var1:}} length 600 with three change-points at the locations 150, 350, 500. The distribution in the four different segments is $\mathcal{N}(0,1)$, $\mathcal{N}(0,9)$, $\mathcal{N}(0,1.44)$, and $\mathcal{N}(0,0.1)$.

\item[(MV\_Gauss2)] {\textit{multi\_var2:}} length 1000 with five change-points at the locations 200, 350, 550, 700, 900. The distribution in the six different segments is $\mathcal{N}(0,10)$, $\mathcal{N}(0,2)$, $\mathcal{N}(0,0.3)$, $\mathcal{N}(0,4)$, $\mathcal{N}(0,20)$, and $\mathcal{N}(0,2)$.

\item[(MD1)] {\textit{multi\_dis1:}} length 750 with two distributional changes at 250 and 500; there are no changes in the first two moments. In the first segment the distribution is ${\rm Gamma}(1,1)$, in the second one it is Poisson(1), while in the last segment the distribution is ${\rm Unif}(1 - \sqrt{3}, 1 + \sqrt{3})$.

\item[(MD2)] {\textit{multi\_dis2:}} length 500 with three distributional changes at 100, 250, 350. In this example, there are also changes in the first two momemts. More specifically, in the first segment the distribution is ${\rm N}(0,1)$, in the second one it is $\chi^2_1$, in the third one it is Student-$t_3$, while in the last segment the distribution is ${\rm N}(1,1)$.

\item[(MD3)] {\textit{multi\_dis3:}} length 1000 with three distributional changes at 200, 500, 750. In this example, there are also changes in the first two momemts. More specifically, in the first segment the distribution is ${\rm Gamma}(1,1)$, in the second one it is $\chi^2_3$, in the third one it is ${\rm N}(0.5,1)$, while in the last segment the distribution is Student-$t_5$.
\end{itemize}
\section{Brief discussion on the steps we followed for the proof of Theorem \ref{consistency_theorem}}
\label{sec:App_B}
In this section, we provide for a better understanding, an informal explanation of the main steps of the proof of Theorem \ref{consistency_theorem}. A full mathematical proof is given in the supplement. For any $u \in \mathbb{R}$, we denote by $B_{t}(u) := \mathbbm{1}_{\left\lbrace X_t \leq u\right\rbrace}$, and the notation for $\tilde{B}_{s,e}^{b}(u)$ is as in \eqref{contrast_non_par}, while, for $F_{t}(u) := \Prob(X_t \leq u)$,
\begin{equation}
\label{CUSUM_F}
\tilde{F}_{s,e}^{b}(u) = \sqrt{\frac{e-b}{(b-s+1)(e-s+1)}}\sum_{t=s}^{b}F_{t}(u) - \sqrt{\frac{b-s+1}{(e-b)(e-s+1)}}\sum_{t=b + 1}^{e}F_{t}(u).
\end{equation}
In the proof, we derive results for $F_{t}(u)$. However, the consistency proof is concerned with the estimated number and locations of the change-points in the processes $\left\lbrace B_{t}(X_i)\right\rbrace_{\substack{t=1,\ldots,T\\ i=1,\ldots,T}}$, and by extension in the original data sequence $X_1, X_2, \ldots, X_T$. Therefore, in order to be able to deduce consistency related to $X_t$ from our $F_{t}(u)$-reliant proof, we need first to show that for a fixed interval $[s,e)$, where $1\leq s < e \leq T$, and for all $b \in [s,e)$, the observed quantity $\tilde{B}_{s,e}^{b}(u)$ given in \eqref{contrast_non_par} is uniformly close to $\tilde{F}_{s,e}^{b}(u)$, for any $u \in \mathbb{R}$; this is achieved in Lemma 1 provided in the supplementary material. We only cover the case where there is at most one true change-point, namely $r_j$ in the interval $[s,e)$ because our methodology, by construction, prevents the examination of intervals that include more than one change-points. Then, we proceed in the proof of Theorem \ref{consistency_theorem} by showing that as the NPID algorithm proceeds, each change-point will get isolated in an interval where its detection will occur with high probability; for this result we use again Lemma 1 available in the main paper. It suffices to restrict our proof on a single change-point detection framework within an interval $[s_j,e_j)$ which contains only the change-point $r_j$ and no other change-point, and also the maximum CUSUM value for a data point within $[s_j,e_j)$ is greater than the threshold $\zeta_T$.

For each such interval, and for $\hat{r}_j$ being the point with the maximum contrast function value (greater than $\zeta_T$) within $[s_j, e_j)$, we will prove that $(\hat{r}_j - r_j)/\gamma_{j,T}^2 = \mathcal{O}_{{\rm p}}\left(1\right), \; \forall j = 1, \ldots, N$, where $\gamma_{j,T}$ is as in Assumption (A1); to achieve this we employ Lemmas 2 and 3 available in the main paper, as well as results from \cite{Dumbgen1991}.

Because upon detection NPID proceeds from the endpoint of the interval that the detection took place, we also show that with probability one there is no change-point in those bypassed points (between the detection and the new starting point). Furthermore, after each detection, the new starting point is at a place that allows the detection of the next change-point. The last step of the proof is to show that after detecting all the change-points, then NPID, with high probability, will stop since there are no more change-points in the remaining interval $[s,e)$.

\end{appendix}
\bibliographystyle{abbrv}
\bibliography{main_arxiv}  

\begin{thebibliography}{10}

\bibitem{ccid}
A.~Anastasiou, I.~Cribben, and P.~Fryzlewicz.
\newblock Cross-covariance isolate detect: a new change-point method for
  estimating dynamic functional connectivity.
\newblock {\em Medical Image Analysis}, 75:102252, 2022.

\bibitem{Anastasiou_Fryzlewicz}
A.~Anastasiou and P.~Fryzlewicz.
\newblock Detecting multiple generalized change-points by isolating single
  ones.
\newblock {\em Metrika}, 85:141--174, 2022.

\bibitem{Anastasiou_Papanastasiou}
A.~Anastasiou and A.~Papanastasiou.
\newblock Generalized multiple change-point detection in the structure of
  multivariate, possibly high-dimensional, data sequences.
\newblock {\em Statistics and Computing}, 33:94, 2023.

\bibitem{Auger_Lawrence}
I.~E. Auger and C.~E. Lawrence.
\newblock Algorithms for the {O}ptimal {I}dentification of {S}egment
  {N}eighborhoods.
\newblock {\em Bulletin of Mathematical Biology}, {\bf {51}}:39--54, 1989.

\bibitem{Carlstein1988}
E.~Carlstein.
\newblock Nonparametric change-point estimation.
\newblock {\em The Annals of Statistics}, {\bf 16}:188--197, 1988.

\bibitem{CH1988}
M.~Cs\"{o}rg\H{o} and L.~Horv\'{a}th.
\newblock Invariance principles for changepoint problems.
\newblock {\em Journal of Multivariate Analysis}, {\bf 27}:151--168, 1988.

\bibitem{Darkhovskh1976}
B.~S. Darkhovskh.
\newblock A nonparametric method for the a posteriori detection of the
  ``disorder'' time of a sequence of independent random variables.
\newblock {\em Theory Probab. Appl.}, {\bf 21}:178--183, 1976.

\bibitem{Dumbgen1991}
L.~D\"{u}mbgen.
\newblock The asymptotic behavior of some nonparametric change-point
  estimators.
\newblock {\em The Annals of Statistics}, {\bf 19}:1471--1495, 1991.

\bibitem{Fryzlewicz_WBS}
P.~Fryzlewicz.
\newblock Wild binary segmentation for multiple change-point detection.
\newblock {\em Annals of Statistics}, {\bf 42}:2243--2281, 2014.

\bibitem{Haynes_Fearnhead2014}
K.~Haynes, P.~Fearnhead, and I.~A. Eckley.
\newblock A computationally efficient nonparametric approach for changepoint
  detection.
\newblock {\em Statistics and Computing}, {\bf 27}:1293--1305, 2017.

\bibitem{soc_net}
H.~Hazrati-Marangaloo and R.~Noorossana.
\newblock A nonparametric change detection approach in social networks.
\newblock {\em Quality and Reliability Engineering International},
  37:2916--2935, 2021.

\bibitem{Kawahara_Sugiyama2011}
Y.~Kawahara and M.~Sugiyama.
\newblock Sequential {C}hange-{P}oint {D}etection {B}ased on {D}irect
  {D}ensity-{R}atio {E}stimation.
\newblock {\em Statistical Analysis and Data Mining}, {\bf 5}:114--127, 2011.

\bibitem{Killick_PELT}
R.~Killick, P.~Fearnhead, and I.~A. Eckley.
\newblock Optimal {D}etection of {C}hangepoints {W}ith a {L}inear
  {C}omputational {C}ost.
\newblock {\em Journal of the American Statistical Association}, {\bf
  {107}}:1590--1598, 2012.

\bibitem{Lee_1996}
C.~B. Lee.
\newblock Nonparametric multiple change-point estimators.
\newblock {\em Statistics \& Probability Letters}, {\bf {27}}:295--304, 1996.

\bibitem{Lungetal_2011}
A.~Lung-Yut-Fong, C.~L\'{e}vy-Leduc, and O.~Capp\'{e}.
\newblock Homogeneity and change-point detection tests for multivariate data
  using rank statistics.
\newblock {\em Journal de la SFdS}, {\bf 156}:133--162, 2015.

\bibitem{MJ_ecp}
D.~S. Matteson and N.~A. James.
\newblock ecp: An {\textsf{r}} {P}ackage for {N}onparametric {M}ultiple
  {C}hange {P}oint {A}nalysis of {M}ultivariate {D}ata.
\newblock {\em Journal of Statistical Software}, {\bf 62}, No.7:1--25, 2014.

\bibitem{Matteson_James_2014}
D.~S. Matteson and N.~A. James.
\newblock A {N}onparametric {A}pproach for {M}ultiple {C}hange {P}oint
  {A}nalysis of {M}ultivariate {D}ata.
\newblock {\em Journal of the American Statistical Association}, {\bf
  109}:334--345, 2014.

\bibitem{Padilla_et_al_2019}
O.~H.~M. Padilla, Y.~Yu, D.~Wang, and A.~Rinaldo.
\newblock Optimal nonparametric change point analysis.
\newblock {\em Electronic Journal of Statistics}, {\bf 15}, No.1:1154--1201,
  2021.

\bibitem{Pettitt}
A.~N. Pettitt.
\newblock A {N}on-{P}arametric {A}pproach to the {C}hange-{P}oint {P}roblem.
\newblock {\em Journal of the Royal Statistical Society. Series C (Applied
  Statistics)}, {\bf 28}:126--135, 1979.

\bibitem{Ross_CPM}
G.~J. Ross.
\newblock Parametric and {N}onparametric {S}equential {C}hange {D}etection in
  \textsf{{R}}: {T}he cpm {P}ackage.
\newblock {\em Journal of Statistical Software}, {\bf 66}, No.3:1--20, 2015.

\bibitem{ECG}
N.~Shvetsov, N.~Buzun, and D.~Dylov.
\newblock Unsupervised non-parametric change point detection in
  electrocardiography.
\newblock {\em SSDBM 2020: 32nd International Conference on Scientific and
  Statistical Database Management}, (19):1--4, 2020.

\bibitem{Szekely_Rizzo2005}
G.~J. Sz\'{e}kely and M.~L. Rizzo.
\newblock Hierarchical {C}lustering via {J}oint {B}etween-{W}ithin {D}istances:
  {E}xtending {W}ard's {M}inimum {V}ariance {M}ethod.
\newblock {\em Journal of Classification}, {\bf 22}:151--183, 2005.

\bibitem{Vanegas2021}
L.~J. Vanegas, M.~Behr, and A.~Munk.
\newblock Multiscale {Q}uantile {S}egmentation.
\newblock {\em Journal of the American Statistical Association}, pages 1--14,
  2021.

\bibitem{Vostrikova}
L.~Vostrikova.
\newblock Detecting ``disorder'' in multidimensional random processes.
\newblock {\em Soviet Mathematics: Doklady}, {\bf 24}:55--59, 1981.

\bibitem{Hyd}
C.~Zhou, R.~van Nooijen, A.~Kolechkina, and M.~Hrachowitz.
\newblock Comparative analysis of nonparametric change-point detectors commonly
  used in hydrology.
\newblock {\em Hydrological Sciences Journal}, 64:1690--1710, 2019.

\bibitem{Zouetal2014}
C.~Zou, G.~Yin, L.~Feng, and Z.~Wang.
\newblock Nonparametric maximum likelihood approach to multiple change-point
  problems.
\newblock {\em The Annals of Statistics}, {\bf 42}:970--1002, 2014.

\end{thebibliography}


\begin{thebibliography}{4}

\bibitem[Anastasiou and Fryzlewicz(2022)]{Anastasiou_Fryzlewicz_supp}
A. Anastasiou and P. Fryzlewicz (2022). Detecting multiple generalized change-points by isolating single ones. \textit{Metrika}, 85, 141--174.

\bibitem[Carlstein(1988)]{Carlstein1988_supp}
E. Carlstein (1988). Nonparametric change-point estimation. \textit{The Annals of Statistics}, \textbf{16}, 188--197.

\bibitem[Dümgen(1991)]{Dumbgen1991_supp}
L. D\"{u}mbgen (1991). The asymptotic behavior of some nonparametric change-point estimators. \textit{The Annals of Statistics}, \textbf{19}, 1471--1495.

\bibitem[Dvoretzky, Kiefer and Wolfowitz(1956)]{DKW_supp}
A. Dvoretzky, J. Kiefer and J.  Wolfowitz (1956). Asymptotic minimax character of the sample distribution function and of the classical multinomial estimator. \textit{Annals of Mathematical Statistics}, \textbf{27}, 642--669.

\end{thebibliography}
\clearpage
\begingroup
\renewcommand{\thepage}{S\arabic{page}} 
\setcounter{page}{1}                    
\renewcommand{\thesection}{S\arabic{section}} 
\setcounter{section}{0}
\renewcommand{\thefigure}{S\arabic{figure}}
\renewcommand{\thetable}{S\arabic{table}}
\renewcommand{\theequation}{S\arabic{equation}}

\setcounter{figure}{0}
\setcounter{table}{0}
\setcounter{equation}{0}
%
%

\begin{center}
  \LARGE \textbf{Supplementary Material for ``Non-parametric multiple change-point detection''} \\[1em]
  \normalsize
  Andreas Anastasiou\textsuperscript{1}, Piotr Fryzlewicz\textsuperscript{2} \\
  \textsuperscript{1}Department of Mathematics and Statistics, University of Cyprus\\
  \textsuperscript{2}Department of Statistics, London School of Economics \\
\end{center}
\vspace{2em}

\noindent\textbf{Abstract:} In this supplement, we provide tables related to the Type I error obtained when the default threshold values were used for the detection process; more details are provided in Section 3.2 of the main paper. In addition, we provide the details for the different types of changes used in the simulation study of Section 3.2 that lead to an appropriate choice for the threshold constant. Furthermore, we give the step-by-step proof of Theorem 1, which shows the consistency of our method in accurately estimating the true number and the locations of the change-points.

\vspace{2em}
\section{Tables related to Section 3.2 of the main paper}
\label{supp_App_A}
In Section 3.2 of the main paper, a large-scale simulation study was carried out to decide the best values for the threshold constant. The best behavior occurred when, approximately, $C=0.6$ and $C = 0.9$ for the mean-dominant norms $L_2$ and $L_\infty$, respectively. In an attempt to measure the Type I error obtained from this choice of threshold constants under the scenario of no change-points, we ran 100 replications for twenty different scenarios of no change-points, covering scenarios from both continuous and discrete distributions. The models used are given in Section 3.2 of the main paper. Tables \ref{tab:typeI_cont} and \ref{tab:typeI_disc} below present the frequency distribution of $\hat{N} - N$ for all the above scenarios and for both the $L_{\infty}$ and $L_2$ mean-dominant norms, when their respective default threshold constants are used.
\begin{table}[H]
\centering
\caption{Distribution of $\hat{N} - N$ over 100 simulated data sequences from the Gaussian or Cauchy models.}
{\small{
\begin{tabular}{|l|l|l|l|l|l|}
\cline{1-6}
 & & & \multicolumn{3}{|c|}{} \\
 & & & \multicolumn{3}{|c|}{$\hat{N} - N$} \\
Model & $T$  & Method & 0 & 1 & $\geq 2$\\
\hline
Gaussian & 30 & $L_{\infty}$ & 97 & 3 & 0\\
 &  & $L_{2}$ & 98 & 2 & 0\\
\cline{2-6}
& 75 & $L_{\infty}$ & 98 & 2 & 0\\
 &  & $L_{2}$ & 100 & 0 & 0\\
\cline{2-6}
 & 200 & $L_{\infty}$ & 99 & 1 & 0\\
 &  & $L_{2}$ & 100 & 0 & 0\\
\cline{2-6}
& 500 & $L_{\infty}$ & 95 & 5 & 0\\
 &  & $L_{2}$ & 100 & 0 & 0\\
\hline
Cauchy & 30 & $L_{\infty}$ & 93 & 7 & 0\\
 &  & $L_{2}$ & 99 & 1 & 0\\
\cline{2-6}
 & 75 & $L_{\infty}$ & 97 & 3 & 0\\
 &  & $L_{2}$ & 100 & 0 & 0\\
\cline{2-6}
 & 200 & $L_{\infty}$ & 92 & 5 & 3\\
 &  & $L_{2}$ & 99 & 0 & 1\\
\cline{2-6}
 & 500 & $L_{\infty}$ & 97 & 3 & 0\\
 &  & $L_{2}$ & 100 & 0 & 0\\
\hline
\end{tabular}}}
\label{tab:typeI_cont}
\end{table}
\begin{table}[H]
\centering
\caption{Distribution of $\hat{N} - N$ over 100 simulated data sequences from the Poisson model with different rate values.}
{\small{
\begin{tabular}{|l|l|l|l|l|l|}
\cline{1-6}
 & & & \multicolumn{3}{|c|}{} \\
 & & & \multicolumn{3}{|c|}{$\hat{N} - N$} \\
Model & $T$  & Method & 0 & 1 & $\geq 2$\\
\hline
Poisson(0.3) & 30 & $L_{\infty}$ & 100 & 0 & 0\\
 &  & $L_{2}$ & 88 & 8 & 4\\
\cline{2-6}
 & 75 & $L_{\infty}$ & 99 & 0 & 1\\
 &  & $L_{2}$ & 98 & 1 & 1\\
\cline{2-6}
 & 200 & $L_{\infty}$ & 100 & 0 & 0\\
 &  & $L_{2}$ & 100 & 0 & 0\\
\cline{2-6}
 & 500 & $L_{\infty}$ & 100 & 0 & 0\\
 &  & $L_{2}$ & 100 & 0 & 0\\
\hline
Poisson(3) & 30 & $L_{\infty}$ & 100 & 0 & 0\\
 &  & $L_{2}$ & 100 & 0 & 0\\
\cline{2-6}
 & 75 & $L_{\infty}$ & 99 & 0 & 1\\
 &  & $L_{2}$ & 100 & 0 & 0\\
\cline{2-6}
 & 200 & $L_{\infty}$ & 99 & 1 & 0\\
 &  & $L_{2}$ & 100 & 0 & 0\\
\cline{2-6}
 & 500 & $L_{\infty}$ & 100 & 0 & 0\\
 &  & $L_{2}$ & 100 & 0 & 0\\
\hline
Poisson(30) & 30 & $L_{\infty}$ & 94 & 5 & 1\\
 &  & $L_{2}$ & 99 & 1 & 0\\
\cline{2-6}
 & 75 & $L_{\infty}$ & 97 & 3 & 0\\
 &  & $L_{2}$ & 100 & 0 & 0\\
\cline{2-6}
 & 200 & $L_{\infty}$ & 98 & 2 & 0\\
 &  & $L_{2}$ & 100 & 0 & 0\\
\cline{2-6}
 & 500 & $L_{\infty}$ & 96 & 4 & 0\\
 &  & $L_{2}$ & 100 & 0 & 0\\
\hline
\end{tabular}}}
\label{tab:typeI_disc}
\end{table}
\section{Details regarding the simulation study in Section 3.2}
\label{sup_App_C}
In this section of the supplement, we provide all the details regarding the different types of changes ((M), (V), and (D)) used in the simulation study, as explained in Section 3.2 of the main paper, that lead to an appropriate choice of the threshold constant. If types (M) and (V) were chosen, then the change in the mean or variance, respectively, had magnitude which followed the normal distribution with mean zero and variance $\sigma^2 \in \left\lbrace 1,3,5 \right\rbrace$. If type (D) was chosen then a distribution from a list of distributions (Normal, Student-$t$, Uniform, and Poisson) was chosen. The first two moments remained unchanged in the distributional changes. We always started with the first segment (the part of the data sequence before the first change-point) being from the normal distribution. For each value of $N_\alpha$ and $T$ we generated 1000 replicates and estimated the number of change-points using our NPID method with threshold $\zeta_T$ as in Equation (11) of the main paper for a great variety of constant values $C$.
\section{Proof of Theorem 1}
\label{App_B}
In this section, we provide a thorough proof of the main theorem in our paper. We first need to introduce some further notation, as well as state and proof some important lemmas. As mentioned in Appendix B of the main paper, we will mainly be working within an interval $[s,e)$ with at most one change-point. From now on, we denote by
\begin{equation}
\label{J_se}
J_{s,e} := \left\lbrace \frac{1}{e-s+1}, \frac{2}{e-s+1}, \ldots, \frac{e-s}{e-s+1}\right\rbrace,
\end{equation}
and let $b^* \in J_{s,e}$. We denote by
\begin{align}
\label{notation_empirical}
\nonumber \hat{h}_{b^*}^B(u) & = \frac{1}{b^*(e-s+1)}\sum_{t=1}^{b^*(e-s+1)}\mathbbm{1}_{\left\lbrace X_{t+s-1} \leq u\right\rbrace}\\
\hat{h}_{b^*}^A(u) & = \frac{1}{(1-b^*)(e-s+1)}\sum_{t=b^*(e-s+1) + 1}^{e-s+1}\mathbbm{1}_{\left\lbrace X_{t+s-1} \leq u\right\rbrace}.
\end{align}
The above functions $\hat{h}_{b^*}^B(u)$ and $\hat{h}_{b^*}^A(u)$ are used to calculate the empirical cumulative distribution at $u \in \mathbb{R}$ up to and after the point $b^*$, respectively. In addition, for $r_0 = 0$ and $r_{N+1} = T$, then with $r_j \in [s,e)$ and $r_j^* = \frac{r_j - s +1}{e-s+1}$, we have the notation
\begin{align}
\label{notation_mixture}
\nonumber h_{b^*}^B(u) & := \mathbbm{1}_{\left\lbrace b^* \leq r_j^*\right\rbrace}F_{r_j}(u) + \mathbbm{1}_{\left\lbrace b^* > r_j^*\right\rbrace}\frac{1}{b^*}\left(r_j^*F_{r_j}(u) + (b^*- r_j^*)F_{r_{j+1}}(u)\right)\\
h_{b^*}^A(u) & := \mathbbm{1}_{\left\lbrace b^* \leq r_j^*\right\rbrace}\frac{1}{1-b^*}\left((r_j^* - b^*)F_{r_j}(u) + (1-r_j^*)F_{r_{j+1}}(u)\right) + \mathbbm{1}_{\left\lbrace b^* > r_j^* \right\rbrace}F_{r_{j+1}}(u).
\end{align}
The expressions in \eqref{notation_mixture} are the unknown mixture distributions. Note that if we are under the case of $r_{j-1} < s < e \leq r_j$, then it is straightforward that both $h_{b^*}^B(u)$ and $h_{b^*}^A(u)$ are equal to $F_{r_j}(u)$. We denote by
\begin{equation}
\label{delta_mixture}
\delta_{s,e}^{b^*}(u) := h_{b^*}^A(u) - h_{b^*}^B(u) = \left(\frac{1-r_j^*}{1-b^*}\mathbbm{1}_{\left\lbrace b^*\leq r_j^* \right\rbrace} + \frac{r_j^*}{b^*}\mathbbm{1}_{\left\lbrace b^* > r_j^* \right\rbrace}\right)\Delta_j(u),
\end{equation}
with $\Delta_j(u)$ as in Equation (9) of the main paper. In the same way,
\begin{align}
\label{delta_empirical}
\nonumber & d_{s,e}^{b^*}(u) := \hat{h}_{b^*}^A(u) - \hat{h}_{b^*}^B(u)\\
& =  \frac{1}{(1-b^*)(e-s+1)}\sum_{t=b^*(e-s+1) +1}^{e-s+1}\mathbbm{1}_{\left\lbrace X_{t+s-1} \leq u \right\rbrace} - \frac{1}{b^*(e-s+1)}\sum_{t=1}^{b^*(e-s+1)}\mathbbm{1}_{\left\lbrace X_{t+s-1} \leq u \right\rbrace}.
\end{align}
We notice that if $b^*$ is near 0 or 1, then both \eqref{delta_mixture} and \eqref{delta_empirical} behave badly. Therefore, we introduce weights $w(b^*) = \sqrt{b^*(1-b^*)}$ and for $J_{s,e}$ as in \eqref{J_se}, we consider the measures
\begin{equation}
\label{bigD}
D_{s,e}^{b^*}(u) = w(b^*)d_{s,e}^{b^*}(u), \quad b^* \in J_{s,e}
\end{equation}
which estimate
\begin{equation}
\label{bigDelta}
\Delta_{s,e}^{b^*}(u) = w(b^*)\delta_{s,e}^{b^*}(u) = \rho(b^*)\Delta_j(u),
\end{equation}
where 
\begin{equation}
\label{rho}
\rho(b^*) = \frac{(1-r_j^*)\sqrt{b^*}}{\sqrt{1-b^*}}\mathbbm{1}_{\left\lbrace b^*\leq r_j^* \right\rbrace} + \frac{r_j^*\sqrt{1-b^*}}{\sqrt{b^*}}\mathbbm{1}_{\left\lbrace b^* > r_j^* \right\rbrace}.
\end{equation}
Note that the CUSUM expressions $\tilde{B}_{s,e}^{b}(u)$ and $\tilde{F}_{s,e}^{b}(u)$ in Equations (5) and (16) of the main paper are related to $D_{s,e}^{b^*}(u)$ and $\Delta_{s,e}^{b^*}(u)$, respectively. For $b = b^*(e-s+1)+ s -1$, simple steps yield
\begin{align}
\label{relation_to_CUSUM}
\nonumber D_{s,e}^{b^*}(u) & = \frac{\sqrt{(b-s+1)(e-b)}}{e-s+1}\left(\frac{1}{e-b}\sum_{t=b - s + 2}^{e-s+1}\mathbbm{1}_{\left\lbrace X_{t+s-1} \leq u \right\rbrace} - \frac{1}{b-s+1}\sum_{t=1}^{b - s + 1}\mathbbm{1}_{\left\lbrace X_{t+s-1} \leq u \right\rbrace}\right)\\
& = -\frac{1}{\sqrt{e-s+1}}\tilde{B}_{s,e}^{b}(u).
\end{align}
Similar steps lead as well to $$\Delta_{s,e}^{b^*}(u) = -\frac{1}{\sqrt{e-s+1}}\tilde{F}_{s,e}^{b}(u).$$
From now on, we also denote by
\begin{align}
\label{notation_for_error}
\nonumber & H_{b^*}^B(u) = \left| h_{b^*}^B(u) - \hat{h}_{b^*}^B(u) \right|\\
\nonumber & H_{b^*}^A(u) = \left| h_{b^*}^A(u) - \hat{h}_{b^*}^A(u) \right|\\
& e_{b^*}(u) = H_{b^*}^B(u) + H_{b^*}^A(u)
\end{align}
A series of lemmas now follows, with results that will be helpful in the proof of Theorem 1. For any data sequence $\left\lbrace X_t\right\rbrace_{t=1,2,\ldots,T}$, allow us first to denote by
\begin{align}
\label{notation_vector}
\nonumber & \boldsymbol{\tilde{B}_{s,e}^b} := \left(\tilde{B}_{s,e}^b(X_1), \ldots, \tilde{B}_{s,e}^b(X_T)\right)\\
& \boldsymbol{\tilde{F}_{s,e}^b} := \left(\tilde{F}_{s,e}^b(X_1), \ldots, \tilde{F}_{s,e}^b(X_T)\right)\\
\nonumber & \boldsymbol{D_{s,e}^b} := \left(D_{s,e}^b(X_1), \ldots, D_{s,e}^b(X_T)\right)\\
& \boldsymbol{\Delta_{s,e}^b} := \left(\Delta_{s,e}^b(X_1), \ldots, \Delta_{s,e}^b(X_T)\right),
\end{align}
where $1 \leq s \leq b < e \leq T$. Lemma \ref{lemma_distance_P_B} below shows that the quantity $L\left(\left|\boldsymbol{\tilde{B}_{s,e}^{b}}\right|\right)$ is uniformly close to $L\left(\left|\boldsymbol{\tilde{F}_{s,e}^{b}}\right|\right)$. 
\begin{lemma}
\label{lemma_distance_P_B}
For any data sequence $\left\lbrace X_t\right\rbrace_{t=1,2,\ldots,T}$ and for any interval $[s,e)$ which includes at most one change-point (this is denoted by $r_j$), we have that both $\Prob(A_T) \geq 1- \frac{12}{T}$ and $\Prob(A_T^*) \geq 1- \frac{12}{T}$, where, for any mean-dominant norm $L(\cdot)$,
\begin{align}
\label{A_T}
\nonumber & A_T = \left\lbrace \max_{b: s \leq b < e}\left|L\left(\left|\boldsymbol{\tilde{B}_{s,e}^{b}}\right|\right) - L\left(\left|\boldsymbol{\tilde{F}_{s,e}^{b}}\right|\right)\right|\leq 4\sqrt{\log T} \right\rbrace\\
& A_T^* = \left\lbrace \max_{b: s \leq b < e}\left\lbrace L\left(\left|\left|\boldsymbol{\tilde{B}_{s,e}^{b}}\right| - \left|\boldsymbol{\tilde{F}_{s,e}^{b}}\right|\right|\right)\right\rbrace \leq 4\sqrt{\log T} \right\rbrace.
\end{align}
The notations $\boldsymbol{\tilde{B}_{s,e}^{b}}$ and $\boldsymbol{\tilde{F}_{s,e}^{b}}$ are as in \eqref{notation_vector}.
\end{lemma}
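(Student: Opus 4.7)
The plan is to first bound $\sup_{u\in\mathbb R}|\tilde B_{s,e}^{b}(u)-\tilde F_{s,e}^{b}(u)|$ uniformly over $b\in[s,e)$ by a quantity of order $\sqrt{\log T}$, on a single high-probability event, and then to lift that bound to an arbitrary mean-dominant norm $L$ by invoking $L\leq L_\infty$ together with a reverse triangle inequality. The core probabilistic tool is the Dvoretzky--Kiefer--Wolfowitz (DKW) inequality, applied to each i.i.d.\ homogeneous sub-segment of $[s,e)$; by construction, this interval contains at most one change-point, so there are at most two such segments.

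The starting point is a direct algebraic rewriting of the CUSUMs. Setting $\hat F_{a,c}(u)=\tfrac{1}{c-a+1}\sum_{t=a}^{c}\mathbbm{1}_{\{X_t\leq u\}}$ and $\bar F_{a,c}(u)=\tfrac{1}{c-a+1}\sum_{t=a}^{c}F_t(u)$, one checks that
$$\tilde B_{s,e}^{b}(u)-\tilde F_{s,e}^{b}(u)=\sqrt{\tfrac{(b-s+1)(e-b)}{e-s+1}}\,\bigl[(\hat F_{s,b}(u)-\bar F_{s,b}(u))-(\hat F_{b+1,e}(u)-\bar F_{b+1,e}(u))\bigr].$$
If the change-point $r_j$ (if present) lies in $[s,e)$, assume without loss of generality $b\leq r_j$ (the opposite case is symmetric). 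Then $\bar F_{s,b}=F_{r_j}$, while the second error term splits further via the decomposition of $[b+1,e]$ into $[b+1,r_j]$ (i.i.d.\ from $F_{r_j}$) and $[r_j+1,e]$ (i.i.d.\ from $F_{r_{j+1}}$), producing a convex combination weighted by $(r_j-b)/(e-b)$ and $(e-r_j)/(e-b)$.

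Next I would apply DKW to each of the three homogeneous empirical CDFs appearing in this decomposition (two with the moving endpoint $b$, one fixed) and union-bound over $b\in[s,e-1]$, with tolerance $C\sqrt{\log T/\text{length}}$ and $C$ tuned so that the aggregate failure probability stays at most $12/T$. On the resulting good event $\mathcal E_T$, each of the three contributions to $\sup_u|\tilde B_{s,e}^{b}(u)-\tilde F_{s,e}^{b}(u)|$ simplifies neatly: the CUSUM weight $\sqrt{(b-s+1)(e-b)/(e-s+1)}$ multiplied by a subsegment proportion such as $(r_j-b)/(e-b)$ and by the DKW bound $C\sqrt{\log T/(r_j-b)}$ equals $C\sqrt{\log T\cdot (b-s+1)(r_j-b)/[(e-s+1)(e-b)]}$, which is at most $C\sqrt{\log T}$ thanks to the elementary inequalities $b-s+1\leq e-s+1$ and $r_j-b\leq e-b$. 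Summing the three contributions (or just two if no change-point is present) yields the uniform-in-$b$ bound $\sup_{u\in\mathbb R}|\tilde B_{s,e}^{b}(u)-\tilde F_{s,e}^{b}(u)|\leq 4\sqrt{\log T}$ on $\mathcal E_T$, provided $C$ is calibrated, for example, to $4/3$.

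Lifting this pointwise bound to any mean-dominant norm is straightforward: such a norm depends only on absolute values, is monotone, and satisfies $L(|\boldsymbol z|)\leq L_\infty(|\boldsymbol z|)=\max_i|z_i|$ by the normalization in \cite{Carlstein1988}; being a norm, it also obeys $|L(|\boldsymbol x|)-L(|\boldsymbol y|)|\leq L(||\boldsymbol x|-|\boldsymbol y||)\leq L(|\boldsymbol x-\boldsymbol y|)$. Applied with $\boldsymbol x=\boldsymbol{\tilde B_{s,e}^{b}}$ and $\boldsymbol y=\boldsymbol{\tilde F_{s,e}^{b}}$, the quantities inside both $A_T$ and $A_T^*$ are dominated by $\max_{i}|\tilde B_{s,e}^{b}(X_i)-\tilde F_{s,e}^{b}(X_i)|\leq \sup_{u\in\mathbb R}|\tilde B_{s,e}^{b}(u)-\tilde F_{s,e}^{b}(u)|\leq 4\sqrt{\log T}$, as required. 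The main technical obstacle will be the constant-tracking in the union-bound step: the DKW constant, the $\mathcal O(T)$ cost of the union bound over $b$, and the three separate segment-level applications need to be balanced so that the failure probability lands at exactly $12/T$ while the multiplicative constant in front of $\sqrt{\log T}$ remains $4$. This is a calibration issue rather than a conceptual one, and a slightly sub-optimal choice of the DKW constant suffices to absorb all of the losses.
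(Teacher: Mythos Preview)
Your proposal is correct and follows essentially the same route as the paper's proof: reduce to the $L_\infty$ norm via $L\leq L_\infty$ and the reverse triangle inequality, decompose the CUSUM error $\tilde B_{s,e}^{b}-\tilde F_{s,e}^{b}$ into at most three homogeneous pieces by splitting at the change-point, apply DKW to each piece, and union-bound over $b$. The only differences are presentational: the paper works throughout with the rescaled quantities $D_{s,e}^{b^*}=-\tilde B_{s,e}^{b}/\sqrt{e-s+1}$ and $\Delta_{s,e}^{b^*}=-\tilde F_{s,e}^{b}/\sqrt{e-s+1}$ and organises the error via $e_{b^*}(u)=H_{b^*}^B(u)+H_{b^*}^A(u)$ with the weight $\sqrt{b^*(1-b^*)}$, whereas you keep the unscaled CUSUMs and exploit the weight $\sqrt{(b-s+1)(e-b)/(e-s+1)}$ directly; and the paper splits the tolerance as $\epsilon_1/2$ then $\epsilon_1/4$ rather than bounding each of the three pieces by $C\sqrt{\log T}$ with $C=4/3$. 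Both bookkeeping schemes land on the same $4\sqrt{\log T}$ and the same failure probability $12/T$.
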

\begin{proof}
For any mean-dominant norm $L(\cdot)$, using the mean dominance property as given in p.190 of \cite{Carlstein1988_supp}, it holds that
\begin{equation*}
0 \leq L_1(\boldsymbol{x}) \leq L(\boldsymbol{x}) \leq L_{\infty}(\boldsymbol{x}), \quad \forall \boldsymbol{x}\in (\mathbb{R}^d)^{+},
\end{equation*}
where $d \in \mathbb{Z}^{+}$. Furthermore,
\begin{equation}
\nonumber \left|L\left(\left|\boldsymbol{\tilde{B}_{s,e}^{b}}\right|\right) - L\left(\left|\boldsymbol{\tilde{F}_{s,e}^{b}}\right|\right)\right| \leq L\left(\left|\left|\boldsymbol{\tilde{B}_{s,e}^{b}}\right|-\left|\boldsymbol{\tilde{F}_{s,e}^{b}}\right|\right|\right).
\end{equation}
Therefore, it is straightforward to see that in order to proof Lemma \ref{lemma_distance_P_B}, it suffices to show just that $\Prob(A_T^*) \geq 1- \frac{12}{T}$, with $A_T^*$ as in \eqref{A_T}. In order to show this result, we use the notation in \eqref{notation_for_error} and after simple steps we can show that for any $u \in \mathbb{R}$
\begin{align}
\nonumber \left|d_{s,e}^{b^*}(u)\right| & \leq \left|\hat{h}_{b^*}^A(u) - h_{b^*}^A(u)\right| + \left|h_{b^*}^A(u) - h_{b^*}^B(u)\right| + \left|h_{b^*}^B(u) - \hat{h}_{b^*}^B(u)\right|\\
\nonumber & = \left|\delta_{s,e}^{b^*}(u)\right| + e_{b^*}(u).
\end{align}
Employing now the notation in \eqref{notation_vector}, it is easy to see that
\begin{align}
\label{useful_bound}
\nonumber & L\left(\left|\left|\boldsymbol{D_{s,e}^{b^*}}\right| - \left|\boldsymbol{\Delta_{s,e}^{b^*}}\right|\right|\right) \leq \sup_{u \in \mathbb{R}}\left|\left|D_{s,e}^{b^*}(u)\right| - \left|\Delta_{s,e}^{b^*}(u)\right|\right| \leq \sqrt{b^*(1-b^*)}\sup_{u \in \mathbb{R}}e_{b^*}(u)\\
& \leq \sqrt{b^*(1-b^*)}\left(\sup_{u \in \mathbb{R}}H_{b^*}^B(u) + \sup_{u \in \mathbb{R}}H_{b^*}^A(u)\right).
\end{align}
The proof will be split into two cases depending on whether there exists one true change-point or no change-points in $[s,e)$.\\
\textbf{Case 1:} $r_{j-1} < s \leq r_j < e \leq r_{j+1}$. By the definition of $H_{b^*}^B(u)$ in \eqref{notation_for_error} and for $F_{t}(u)$ as in Equation (9) of the main paper, we have that
\begin{align}
\nonumber & H_{b^*}^B(u) = \left|\sum_{t=1}^{b^*(e-s+1)}\frac{\mathbbm{1}_{\left\lbrace X_{t+s-1}\leq u\right\rbrace}}{b^*(e-s+1)} - \mathbbm{1}_{\left\lbrace b\leq r_j \right\rbrace}F_{r_j}(u) - \mathbbm{1}_{\left\lbrace b > r_j\right\rbrace}\frac{\left(r_j^*F_{r_j}(u) + (b^*-r_j^*)F_{r_{j+1}}(u)\right)}{b^*}\right|
\end{align}
Therefore,
\begin{align}
\label{bound_for_BH1}
\sup_{u \in \mathbb{R}} H_{b^*}^B(u) & \leq \mathbbm{1}_{\left\lbrace b \leq r_j \right\rbrace}\sup_{u\in \mathbb{R}}\left|\sum_{t=1}^{b^*(e-s+1)}\frac{\mathbbm{1}_{\left\lbrace X_{t+s-1}\leq u\right\rbrace}}{b^*(e-s+1)} - F_{r_j}(u)\right|\\
\nonumber & \qquad + \mathbbm{1}_{\left\lbrace b > r_j \right\rbrace}\left(\sup_{u \in \mathbb{R}}\left|\sum_{t=1}^{r_j^*(e-s+1)}\frac{\mathbbm{1}_{\left\lbrace X_{t+s-1}\leq u\right\rbrace}}{b^*(e-s+1)} - \frac{r_j^*}{b^*}F_{r_j}(u)\right|\right.\\
\label{bound_for_BH2}
& \qquad\qquad\qquad\qquad  \left. + \sup_{u \in \mathbb{R}}\left|\sum_{t=r_j^*(e-s+1)+1}^{b^*(e-s+1)}\frac{\mathbbm{1}_{\left\lbrace X_{t+s-1}\leq u\right\rbrace}}{b^*(e-s+1)} - \frac{b^*-r_j^*}{b^*}F_{r_{j+1}}(u)\right|\right)
\end{align}
In the same way, for $H_{b^*}^A(u)$ as in \eqref{notation_for_error}, we have that
\begin{align}
\nonumber \sup_{u \in \mathbb{R}}H_{b^*}^A(u) & \leq \mathbbm{1}_{\left\lbrace b \leq r_j\right\rbrace}\left(\sup_{u \in \mathbb{R}}\left|\sum_{t=b^{*}(e-s+1)+1}^{r_j^*(e-s+1)}\frac{\mathbbm{1}_{\left\lbrace X_{t+s-1} \leq u \right\rbrace}}{(e-s+1)(1-b^{*})} - \frac{r_j^* - b^{*}}{1-b^{*}}F_{r_j}(u)\right|\right.\\
\label{bound_for_AH1}
& \qquad\qquad\qquad \left. + \sup_{u \in \mathbb{R}}\left|\sum_{t=r_j^*(e-s+1)+1}^{e-s+1}\frac{\mathbbm{1}_{\left\lbrace X_{t+s-1} \leq u \right\rbrace}}{(e-s+1)(1-b^{*})} - \frac{1-r_j^*}{1-b^{*}}F_{r_{j+1}}(u)\right|\right)\\
\label{bound_for_AH2}
& \qquad + \mathbbm{1}_{\left\lbrace b > r_j\right\rbrace}\sup_{u \in \mathbb{R}}\left|\sum_{t=b^{*}(e-s+1)+1}^{e-s+1}\frac{\mathbbm{1}_{\left\lbrace X_{t+s-1} \leq u \right\rbrace}}{(e-s+1)(1-b^{*})} - F_{r_{j+1}}(u)\right|.
\end{align}
Using the results in  \eqref{useful_bound}, \eqref{bound_for_BH1}, \eqref{bound_for_BH2}, \eqref{bound_for_AH1}, and \eqref{bound_for_AH2}, yields
\begin{equation}
\nonumber L\left(\left|\left|\boldsymbol{D_{s,e}^{b^*}}\right| - \left|\boldsymbol{\Delta_{s,e}^{b^*}}\right|\right|\right) \leq \sqrt{b^*(1-b^*)}(\eqref{bound_for_BH1} + \eqref{bound_for_BH2} + \eqref{bound_for_AH1} + \eqref{bound_for_AH2}).
\end{equation} 
This means that for $\epsilon_1 > 0$ and with $J_{s,e}$ as in \eqref{J_se},
\begin{align}
\nonumber & \Prob\left(\max_{b^* \in J_{s,e}}L\left(\left|\left|\boldsymbol{D_{s,e}^{b^*}}\right| - \left|\boldsymbol{\Delta_{s,e}^{b^*}}\right|\right|\right) > \epsilon_1 \right)\\
\nonumber & \leq \Prob\left(\max_{b^*\in J_{s,e}}\sqrt{b^*(1-b^*)}(\eqref{bound_for_BH1} + \eqref{bound_for_BH2} + \eqref{bound_for_AH1} + \eqref{bound_for_AH2}) > \epsilon_1 \right)\\
\label{boundB}
& \leq \Prob\left(\max_{b^* \in J_{s,e}}\sqrt{b^*(1-b^*)}(\eqref{bound_for_BH1} + \eqref{bound_for_BH2} > \frac{\epsilon_1}{2} \right)\\
\label{boundA}
& \qquad + \Prob\left(\max_{b^*\in J_{s,e}}\sqrt{b^*(1-b^*)}(\eqref{bound_for_AH1} + \eqref{bound_for_AH2} > \frac{\epsilon_1}{2} \right) 
\end{align}
We will now show how to bound the probability in \eqref{boundB}, and in the same way one can work to bound the probability in \eqref{boundA}. For $J_{s,e}$ as in \eqref{J_se} and using the Bonferroni inequality we obtain that
\begin{align}
\label{BP1_midpoint}
\nonumber & \Prob\left(\max_{b^* \in J_{s,e}}\sqrt{b^*(1-b^*)}(\eqref{bound_for_BH1} + \eqref{bound_for_BH2} > \frac{\epsilon_1}{2}\right)\\
\nonumber & \leq \sum_{b^* \in J_{s,e}}\Prob\left(\sqrt{b^*(1-b^*)}(\eqref{bound_for_BH1} + \eqref{bound_for_BH2} > \frac{\epsilon_1}{2}\right)\\
\nonumber & = \sum_{\substack{b^*\in J_{s,e}\\
                  b^* \leq r_j^*}}\Prob\left(\sqrt{b^*(1-b^*)}(\eqref{bound_for_BH1} + \eqref{bound_for_BH2} > \frac{\epsilon_1}{2}\right) + \sum_{\substack{b^*\in J_{s,e}\\
                  b^* > r_j^*}}\Prob\left(\sqrt{b^*(1-b^*)}(\eqref{bound_for_BH1} + \eqref{bound_for_BH2} > \frac{\epsilon_1}{2}\right)\\
\nonumber & \leq \sum_{\substack{b^*\in J_{s,e}\\
                  b^* \leq r_j^*}}\Prob\left(\sqrt{b^*(1-b^*)}\sup_{u\in \mathbb{R}}\left|\sum_{t=1}^{b^*(e-s+1)}\frac{\mathbbm{1}_{\left\lbrace X_{t+s-1}\leq u\right\rbrace}}{b^*(e-s+1)} - F_{r_j}(u)\right|> \frac{\epsilon_1}{2}\right)\\
\nonumber & \quad + \sum_{\substack{b^*\in J_{s,e}\\
                  b^* > r_j^*}}\left\lbrace\Prob\left(\sqrt{b^*(1-b^*)}\sup_{u \in \mathbb{R}}\left|\sum_{t=1}^{r_j^*(e-s+1)}\frac{\mathbbm{1}_{\left\lbrace X_{t+s-1}\leq u\right\rbrace}}{b^*(e-s+1)} - \frac{r_j^*}{b^*}F_{r_j}(u)\right| > \frac{\epsilon_1}{4}\right)\right.\\
& \qquad\qquad\left. + \Prob\left(\sqrt{b^*(1-b^*)}\sup_{u \in \mathbb{R}}\left|\sum_{t=r_j^*(e-s+1)+1}^{b^*(e-s+1)}\frac{\mathbbm{1}_{\left\lbrace X_{t+s-1}\leq u\right\rbrace}}{b^*(e-s+1)} - \frac{b^*-r_j^*}{b^*}F_{r_{j+1}}(u)\right| > \frac{\epsilon_1}{4}\right)\right\rbrace.
\end{align}
To bound the above probabilities we will employ the Dvoretzky-Kiefer-Wolfowitz inequality as expressed in \cite{DKW_supp}. Since $b^* \in (0,1)$, we have that for $b^* \leq r_j^*$,
\begin{align}
\label{BP1}
\nonumber & \Prob\left(\sqrt{b^*(1-b^*)}\sup_{u\in \mathbb{R}}\left|\sum_{t=1}^{b^*(e-s+1)}\frac{\mathbbm{1}_{\left\lbrace X_{t+s-1}\leq u\right\rbrace}}{b^*(e-s+1)} - F_{r_j}(u)\right|> \frac{\epsilon_1}{2}\right)\\
& \leq \Prob\left(\sup_{u\in \mathbb{R}}\left|\sum_{t=1}^{b^*(e-s+1)}\frac{\mathbbm{1}_{\left\lbrace X_{t+s-1}\leq u\right\rbrace}}{b^*(e-s+1)} - F_{r_j}(u)\right|> \frac{\epsilon_1}{2\sqrt{b^*}}\right) \leq 2\exp\left\lbrace -\frac{e-s+1}{2}\epsilon_1^2\right\rbrace.
\end{align}
In the same way, for $b^* > r_j^*$,
\begin{equation}
\label{BP2}
\Prob\left(\sqrt{b^{*}(1-b^{*})}\sup_{u \in \mathbb{R}}\left|\sum_{t=1}^{r_j^*(e-s+1)}\frac{\mathbbm{1}_{\left\lbrace X_{t+s-1}\leq u\right\rbrace}}{b^{*}(e-s+1)} - \frac{r_j^*}{b^{*}}F_{r_j}(u)\right| > \frac{\epsilon_1}{4}\right) \leq 2\exp\left\lbrace -\frac{e-s+1}{8}\epsilon_1^2 \right\rbrace.
\end{equation}
and
\begin{align}
\label{BP3}
\nonumber & \Prob\left(\sqrt{b^{*}(1-b^{*})}\sup_{u \in \mathbb{R}}\left|\sum_{t=r_j^*(e-s+1)+1}^{b^{*}(e-s+1)}\frac{\mathbbm{1}_{\left\lbrace X_{t+s-1}\leq u\right\rbrace}}{b^{*}(e-s+1)} - \frac{b^{*}-r_j^*}{b^{*}}F_{r_{j+1}}(u)\right| > \frac{\epsilon_1}{4}\right)\\
& \leq 2\exp\left\lbrace -\frac{e-s+1}{8}\epsilon_1^2 \right\rbrace.
\end{align}
The results in \eqref{BP1_midpoint}, \eqref{BP1}, \eqref{BP2}, and \eqref{BP3}, lead to
\begin{equation}
\nonumber \eqref{boundB} \leq 2(e-s+1)\left(\exp\left\lbrace -\frac{e-s+1}{2}\epsilon_1^2 \right\rbrace + 2\exp\left\lbrace -\frac{e-s+1}{8}\epsilon_1^2\right\rbrace\right).
\end{equation}
The same bound holds for \eqref{boundA}, which means that
\begin{align}
\label{finalboundstep1_1}
\nonumber & \Prob\left(\max_{b^{*} \in J_{s,e}}L\left(\left|\left|\boldsymbol{D_{s,e}^{b^*}}\right| - \left|\boldsymbol{\Delta_{s,e}^{b^*}}\right|\right|\right) > \epsilon_1 \right)\\
& \leq 4(e-s+1)\left(\exp\left\lbrace -\frac{e-s+1}{2}\epsilon_1^2 \right\rbrace + 2\exp\left\lbrace -\frac{e-s+1}{8}\epsilon_1^2 \right\rbrace\right)
\end{align}
{\raggedright{\textbf{Case 2:} $r_{j-1} < s < e \leq r_{j}$.}} Following the same process as in Case 1 and using the notation in \eqref{notation_for_error}, in this scenario we have that
\begin{align}
\label{boundB_case2}
& \sup_{u \in \mathbb{R}}H_{b^*}^B(u) = \sup_{u \in \mathbb{R}}\left|\sum_{t=1}^{b^*(e-s+1)}\frac{\mathbbm{1}_{\left\lbrace X_{t+s-1}\leq u \right\rbrace}}{b^{*}(e-s+1)} - F_{r_j}(u)\right|,\\
\label{boundA_case2}
& \sup_{u \in \mathbb{R}}H_{b^*}^A(u) = \sup_{u \in \mathbb{R}}\left|\sum_{t=b^{*}(e-s+1)+1}^{e-s+1}\frac{\mathbbm{1}_{\left\lbrace X_{t+s-1}\leq u \right\rbrace}}{(1-b^{*})(e-s+1)} - F_{r_j}(u)\right|.
\end{align}
Using again the Dvoretzky-Kiefer-Wolfowitz inequality, then simple calculations yield
\begin{align}
\nonumber & \Prob\left(\max_{b^{*} \in J_{s,e}}L\left(\left|\left|\boldsymbol{D_{s,e}^{b^*}}\right| - \left|\boldsymbol{\Delta_{s,e}^{b^*}}\right|\right|\right) > \epsilon_1 \right)\\
\nonumber & \leq \Prob\left(\max_{b^{*} \in J_{s,e}}\sup_{u \in \mathbb{R}}\left|\left|D_{s,e}^{b^{*}}(u)\right| - \left|\Delta_{s,e}^{b^{*}}(u)\right|\right| > \epsilon_1 \right)\\
\nonumber & \leq \Prob\left(\max_{b^{*} \in J_{s,e}}\sqrt{b^{*}(1-b^{*})}(\eqref{boundB_case2} + \eqref{boundA_case2} > \epsilon_1 \right)\\
\nonumber & \leq \Prob\left(\max_{b^{*} \in J_{s,e}}\sqrt{b^{*}(1-b^{*})}(\eqref{boundB_case2} > \frac{\epsilon_1}{2} \right) + \Prob\left(\max_{b^* \in J_{s,e}}\sqrt{b^*(1-b^*)}(\eqref{boundA_case2} > \frac{\epsilon_1}{2} \right)\\
\nonumber & \leq 4\exp\left\lbrace -\frac{e-s+1}{2}\epsilon_1^2 \right\rbrace,
\end{align}
which is less than the upper bound for Case 1 given in \eqref{finalboundstep1_1}. Therefore using the result in \eqref{finalboundstep1_1}, we have that for $\epsilon_1^* = \epsilon_1\sqrt{e-s+1}$, \eqref{relation_to_CUSUM} leads to
\begin{align}
\label{newstep1}
\nonumber & \Prob\left(\max_{b^* \in J_{s,e}}L\left(\left|\left|\boldsymbol{D_{s,e}^{b^*}}\right| - \left|\boldsymbol{\Delta_{s,e}^{b^*}}\right|\right|\right) > \epsilon_1 \right) = \Prob\left(\max_{b: s \leq b < e}L\left(\left|\left|\boldsymbol{\tilde{B}_{s,e}^{b}}\right| - \left|\boldsymbol{\tilde{F}_{s,e}^{b}}\right|\right|\right) > \epsilon_1^*\right)\\
\nonumber & \leq 4(e-s+1)\left(\exp\left\lbrace -\frac{(\epsilon_1^*)^2}{2}\right\rbrace + 2\exp\left\lbrace -\frac{(\epsilon_1^*)^2}{8}\right\rbrace\right)\\
& \leq 4T\left(\exp\left\lbrace -\frac{(\epsilon_1^*)^2}{2}\right\rbrace + 2\exp\left\lbrace -\frac{(\epsilon_1^*)^2}{8}\right\rbrace\right),
\end{align}
Through the result in \eqref{newstep1}, and since $\forall \epsilon > 0$,
\begin{equation}
\nonumber \Prob\left(\max_{b: s \leq b < e}\left|L\left(\left|\boldsymbol{\tilde{B}_{s,e}^{b}}\right|\right) - L\left(\left|\boldsymbol{\tilde{F}_{s,e}^{b}}\right|\right)\right| > \epsilon\right) \leq \Prob\left(\max_{b: s \leq b < e}L\left(\left|\left|\boldsymbol{\tilde{B}_{s,e}^{b}}\right| - \left|\boldsymbol{\tilde{F}_{s,e}^{b}}\right|\right|\right) > \epsilon\right)
\end{equation}
then it is straightforward that for $A_T$ and $A_T^*$ as in \eqref{A_T},
\begin{equation}
\nonumber \Prob(A_T^{c}) \leq \Prob\left((A_T^*)^{c}\right) \leq 4T\left(\exp\left\lbrace -8\log T\right\rbrace + 2\exp\left\lbrace -2\log T\right\rbrace\right) =  \frac{4}{T}\left(2 + \frac{1}{T^6}\right)\leq \frac{12}{T},
\end{equation}
which completes the proof.
\end{proof}
\begin{lemma}
\label{lemma_orderB_P}
For any interval $[s,e)$ that has only one true change-point, namely $r_j$, and for any mean-dominant norm, $L(\cdot)$, we have that
\begin{equation}
\label{probability_order_CUSUM}
L\left(\left|\boldsymbol{B_{s,e}^{r_j}} - \boldsymbol{F_{s,e}^{r_j}}\right|\right) = \mathcal{O}_{{\rm p}}(1).
\end{equation}
\end{lemma}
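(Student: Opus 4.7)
The plan is to exploit the fact that at the true change-point $r_j$, the difference between the empirical and theoretical CUSUMs is centered and decomposes into two independent empirical-process fluctuations on the two homogeneous segments, which lets us use the Dvoretzky--Kiefer--Wolfowitz (DKW) inequality without a union bound over $b$—this is what upgrades the $\mathcal{O}_p(\sqrt{\log T})$ rate of Lemma~1 to a genuine $\mathcal{O}_p(1)$.

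I would first invoke the mean-dominance chain $L(\boldsymbol{x}) \leq L_\infty(\boldsymbol{x})$ for nonnegative vectors (as cited from Carlstein, 1988, p.190), obtaining
\begin{equation*}
L\!\left(\left|\boldsymbol{\tilde{B}_{s,e}^{r_j}} - \boldsymbol{\tilde{F}_{s,e}^{r_j}}\right|\right) \leq \max_{i=1,\ldots,T}\left|\tilde{B}_{s,e}^{r_j}(X_i) - \tilde{F}_{s,e}^{r_j}(X_i)\right| \leq \sup_{u\in\mathbb{R}}\left|\tilde{B}_{s,e}^{r_j}(u) - \tilde{F}_{s,e}^{r_j}(u)\right|.
\end{equation*}
This strips away the data-dependent evaluation points and reduces the problem to bounding a single Kolmogorov-type supremum.

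Next, setting $n_1 = r_j - s + 1$, $n_2 = e - r_j$, $n = n_1 + n_2$, and using the assumption that $r_j$ is the only change-point in $[s,e)$ (so $F_t = F_j$ on $[s,r_j]$ and $F_t = F_{j+1}$ on $[r_j+1,e]$), a direct rearrangement of the CUSUM formula gives
\begin{equation*}
\tilde{B}_{s,e}^{r_j}(u) - \tilde{F}_{s,e}^{r_j}(u) = \sqrt{\tfrac{n_2}{n}}\,\sqrt{n_1}\bigl(\hat{F}_1(u) - F_j(u)\bigr) - \sqrt{\tfrac{n_1}{n}}\,\sqrt{n_2}\bigl(\hat{F}_2(u) - F_{j+1}(u)\bigr),
\end{equation*}
where $\hat{F}_1, \hat{F}_2$ denote the empirical CDFs on the two homogeneous segments. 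Taking suprema and using the triangle inequality bounds the left-hand side by a combination of two scaled Kolmogorov distances.

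I would then apply the DKW inequality independently on each segment, namely $\Prob(\sqrt{n_k}\sup_u |\hat{F}_k(u)-F(u)| > M) \leq 2e^{-2M^2}$ for $k=1,2$. Since the weights satisfy $\sqrt{n_1/n}, \sqrt{n_2/n} \leq 1$, a union bound yields
\begin{equation*}
\Prob\!\left(\sup_u \left|\tilde{B}_{s,e}^{r_j}(u) - \tilde{F}_{s,e}^{r_j}(u)\right| > 2M\right) \leq 4 e^{-2M^2},
\end{equation*}
and stochastic boundedness of $L(|\boldsymbol{\tilde{B}_{s,e}^{r_j}} - \boldsymbol{\tilde{F}_{s,e}^{r_j}}|)$ follows by choosing, for given $\varepsilon > 0$, $M_\varepsilon$ with $4e^{-2M_\varepsilon^2} \leq \varepsilon$.

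The only place where care is needed is in recognising the cancellation: unlike Lemma~1, which must control the CUSUM discrepancy simultaneously over all $b \in [s,e)$ and therefore loses a $\sqrt{\log T}$ factor to the union bound, here the evaluation is pinned at the single location $b = r_j$. At that point, the weight $\sqrt{n_1 n_2/n}$ in front of the empirical process is exactly the one that absorbs the $\sqrt{n_k}$ rate coming from DKW, leaving a constant-order bound that is uniform in the interval length.
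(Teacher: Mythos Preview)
Your proposal is correct and follows essentially the same approach as the paper's own proof: both bound the mean-dominant norm by the $L_\infty$ norm (hence by $\sup_{u\in\mathbb{R}}|\tilde{B}_{s,e}^{r_j}(u)-\tilde{F}_{s,e}^{r_j}(u)|$), decompose this difference into the two segment-wise centred empirical processes weighted by $\sqrt{n_1 n_2/n}$, and then apply the Dvoretzky--Kiefer--Wolfowitz inequality on each homogeneous segment together with a union bound to obtain a tail bound of the form $4\exp\{-cM^2\}$, from which $\mathcal{O}_p(1)$ is immediate. The only cosmetic differences are in the parametrisation of the tail bound (the paper substitutes $\epsilon = K\sqrt{\max\{n_1,n_2\}/l}$ and arrives at $4\exp\{-K^2/2\}$, whereas you work directly with $2M$ and the sharper DKW constant), and your explicit remark contrasting the single-$b$ evaluation here with the union bound over $b$ in Lemma~1 is a helpful piece of intuition that the paper leaves implicit.
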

\begin{proof}
Using the definition of $\tilde{B}_{s,e}^{r_j}(u)$ and $\tilde{F}_{s,e}^{r_j}(u)$, we have for $l = e - s + 1$ that
\begin{align}
\nonumber & L\left(\left|\boldsymbol{B_{s,e}^{r_j}} - \boldsymbol{F_{s,e}^{r_j}}\right|\right) \leq \sup_{u \in \mathbb{R}}\left|\tilde{B}_{s,e}^{r_j}(u) - \tilde{F}_{s,e}^{r_j}(u)\right|\\
\nonumber & = \sup_{u \in \mathbb{R}}\left|\sqrt{\frac{e - r_j}{l(r_j-s + 1)}}\left(\sum_{t=s}^{r_j}\mathbbm{1}_{\left\lbrace X_t \leq u\right\rbrace} - (r_j-s+1)F_{r_j}(u)\right)\right.\\
\nonumber & \left.\qquad\qquad\qquad - \sqrt{\frac{r_j - s + 1}{l(e - r_j)}}\left(\sum_{t=r_j+1}^{e}\mathbbm{1}_{\left\lbrace X_t \leq u\right\rbrace} - (e - r_j)F_{r_{j+1}}(u)\right)\right|,
\end{align}
and therefore for any $\epsilon > 0$,
\begin{align}
\nonumber & \Prob\left(L\left(\left|\boldsymbol{B_{s,e}^{r_j}} - \boldsymbol{F_{s,e}^{r_j}}\right|\right) > \epsilon\right)\\
\nonumber & \leq \Prob\left(\sup_{u \in \mathbb{R}}\left|\sqrt{\frac{e - r_j}{l(r_j-s + 1)}}\left(\sum_{t=s}^{r_j}\mathbbm{1}_{\left\lbrace X_t \leq u\right\rbrace} - (r_j-s+1)F_{r_j}(u)\right)\right| > \frac{\epsilon}{2}\right)\\
\nonumber & \qquad + \Prob\left(\sup_{u \in \mathbb{R}}\left|\sqrt{\frac{r_j - s + 1}{l(e - r_j)}}\left(\sum_{t=r_j+1}^{e}\mathbbm{1}_{\left\lbrace X_t \leq u\right\rbrace} - (e - r_j)F_{r_{j+1}}(u)\right)\right| > \frac{\epsilon}{2}\right)\\
\nonumber & = \Prob\left(\sup_{u \in \mathbb{R}}\left|\frac{1}{r_j-s+1}\sum_{t=s}^{r_j}\mathbbm{1}_{\left\lbrace X_t \leq u\right\rbrace} - F_{r_j}(u)\right| > \frac{\epsilon\sqrt{l}}{2\sqrt{(e - r_j)(r_j - s +1)}}\right)\\
\nonumber & \qquad + \Prob\left(\sup_{u \in \mathbb{R}}\left|\frac{1}{e - r_j}\sum_{t=r_j+1}^{e}\mathbbm{1}_{\left\lbrace X_t \leq u\right\rbrace} - F_{r_{j+1}}(u)\right| > \frac{\epsilon\sqrt{l}}{2\sqrt{(e - r_j)(r_j - s +1)}}\right)\\
\nonumber & \leq 2\left({\rm exp}\left\lbrace -\frac{\epsilon^2l}{2(e - r_j)}\right\rbrace + {\rm exp}\left\lbrace -\frac{\epsilon^2l}{2(r_j - s + 1)}\right\rbrace\right)
\end{align}
using the Dvoretzky-Kiefer-Wolfowitz inequality. Therefore, for any positive constant $K$ that does not depend on $T$, we have that
\begin{align}
\nonumber & \Prob\left(L\left(\left|\boldsymbol{B_{s,e}^{r_j}} - \boldsymbol{F_{s,e}^{r_j}}\right|\right) > \frac{K \sqrt{\max\left\lbrace e - r_j, r_j - s +1\right\rbrace}}{\sqrt{l}}\right)\\
\nonumber & \leq 2\left({\rm exp}\left\lbrace -\frac{K^2\max\left\lbrace e - r_j, r_j- s + 1\right\rbrace}{2(e - r_j)}\right\rbrace + {\rm exp}\left\lbrace -\frac{K^2\max \left\lbrace e - r_j, r_j-s+1 \right\rbrace}{2(r_j - s + 1)}\right\rbrace\right)\\
\nonumber & \leq 4\exp\left\lbrace-\frac{K^2}{2}\right\rbrace
\end{align}
which leads to the result in \eqref{probability_order_CUSUM}.
\end{proof}
%
\begin{lemma}
\label{lemma_rho_relation}
For any interval $[s, e)$ that has only one true change-point, namely $r_j$, we have that for any arbitrary $b^* \in J_{s,e}$, with $J_{s,e}$ defined in \eqref{J_se}, there is a constant $C > 0$ such that, for $r_j^* = \frac{r_j - s + 1}{e-s+1}$,
\begin{equation}
\label{rho_ineq}
\rho(r_j^*) - \rho(b^*) \geq C|b^* - r_j^*|,
\end{equation}
where $\rho(b^*)$ is given in \eqref{rho}.
\end{lemma}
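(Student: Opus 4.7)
The plan is to split into the two cases dictated by the piecewise definition of $\rho$ and, in each one, convert the difference of square roots into an explicit linear factor in $(r_j^* - b^*)$ by the standard rationalisation trick $a - b = (a^2 - b^2)/(a + b)$.

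\emph{Case 1: $b^* \leq r_j^*$.} Here $\rho(b^*) = (1-r_j^*)\sqrt{b^*/(1-b^*)}$ and $\rho(r_j^*) = \sqrt{r_j^*(1-r_j^*)}$. Writing the latter as $(1-r_j^*)\sqrt{r_j^*/(1-r_j^*)}$ and rationalising, I would obtain
\begin{equation*}
\rho(r_j^*) - \rho(b^*)
= \frac{(1-r_j^*)\bigl[r_j^*(1-b^*) - (1-r_j^*)b^*\bigr]/(1-b^*)}
       {\sqrt{r_j^*(1-r_j^*)} + (1-r_j^*)\sqrt{b^*/(1-b^*)}}
= \frac{(1-r_j^*)(r_j^* - b^*)/(1-b^*)}
       {\sqrt{r_j^*(1-r_j^*)} + (1-r_j^*)\sqrt{b^*/(1-b^*)}}.
\end{equation*}
Since $b^* \leq r_j^*$, the second term in the denominator is bounded above by $\sqrt{r_j^*(1-r_j^*)}$, so the whole denominator is at most $2\sqrt{r_j^*(1-r_j^*)}$. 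Combined with $1-b^* \leq 1$ in the numerator, this gives
\begin{equation*}
\rho(r_j^*) - \rho(b^*) \;\geq\; \frac{\sqrt{1-r_j^*}}{2\sqrt{r_j^*}} \,|b^* - r_j^*|.
\end{equation*}

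\emph{Case 2: $b^* > r_j^*$.} The argument is symmetric under the exchange $b^* \leftrightarrow 1-b^*$, $r_j^* \leftrightarrow 1-r_j^*$. Writing $\rho(r_j^*) = r_j^*\sqrt{(1-r_j^*)/r_j^*}$ and rationalising $r_j^*\sqrt{(1-r_j^*)/r_j^*} - r_j^*\sqrt{(1-b^*)/b^*}$ yields an analogous identity with numerator proportional to $(b^* - r_j^*)$, giving
\begin{equation*}
\rho(r_j^*) - \rho(b^*) \;\geq\; \frac{\sqrt{r_j^*}}{2\sqrt{1-r_j^*}} \,|b^* - r_j^*|.
\end{equation*}

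Taking
\begin{equation*}
C \;=\; \frac{1}{2}\min\!\left\{\sqrt{\tfrac{1-r_j^*}{r_j^*}},\,\sqrt{\tfrac{r_j^*}{1-r_j^*}}\right\} \;>\; 0
\end{equation*}
(well-defined since $r_j^* \in (0,1)$, a consequence of $s \leq r_j < e$) completes the proof. The only delicate step is the bound on the denominator after rationalisation; the key observation is the monotonicity of $b^* \mapsto \sqrt{b^*/(1-b^*)}$, which forces the $b^*$-dependent term not to exceed the $r_j^*$-term, a clean fact that avoids the complications of trying to argue via concavity/convexity (the function $b^* \mapsto \sqrt{b^*/(1-b^*)}$ changes convexity at $b^* = 1/4$, so a direct mean-value argument is not clean).
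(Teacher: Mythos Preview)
Your proof is correct and follows essentially the same two-case rationalisation strategy as the paper. The only cosmetic difference is that the paper first simplifies $\rho(b^*)$ via $1-b^*\ge 1-r_j^*$ (in Case~1) before rationalising $\sqrt{r_j^*}-\sqrt{b^*}$, obtaining the constant $\tfrac{1}{2}\sqrt{1-r_j^*}$, whereas you rationalise $\rho(r_j^*)-\rho(b^*)$ in one step and get the slightly sharper $\tfrac{1}{2}\sqrt{(1-r_j^*)/r_j^*}$; both constants depend on $r_j^*$ and are entirely adequate for the lemma's use downstream.
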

\begin{proof}
We will split the proof in two cases depending on the position of $b^*$ with respect to the change-point $r_j^*$.\\
{\textbf{Case 1:}} $b^* \leq r_j^*$. We have that
\begin{align}
\nonumber \rho(r_j^*) - \rho(b^*) & = \sqrt{r_j^*(1-r_j^*)} - \sqrt{\frac{b^*}{1-b^*}}(1-r_j^*) \geq \sqrt{1-r_j^*}\left(\sqrt{r_j^*} - \sqrt{b^*}\right)\\
\nonumber & \geq \frac{\sqrt{1-r_j^*}}{2}\left(r_j^* - b^*\right),
\end{align}
due to the fact that $\sqrt{r_j^*} - \sqrt{b^*} = \frac{r_j^* - b^*}{\sqrt{r_j^*} + \sqrt{b^*}} \geq \frac{r_j^* - b^*}{2}$.\\
{\textbf{Case 2:}} $b^* > r_j^*$. Following a similar approach as in Case 1, we have that
\begin{align}
\nonumber \rho(r_j^*) - \rho(b^*) & \geq \frac{\sqrt{r_j^*}}{2}\left(b^* - r_j^*\right).
\end{align}
Combining the results of the two cases above, we can conclude that in general there exists a positive constant $C$, such that $\rho(r_j^*) - \rho(b^*) \geq C\left|b^* - r_j^*\right|$. 
\end{proof}
{\raggedright{{\textbf{Proof of Theorem 1}}}}\\
The proof uses the results of Lemmas \ref{lemma_distance_P_B} - \ref{lemma_rho_relation} and is based on two steps.\\
{\textbf{Step 1:}} For ease of understanding, we split this step into three smaller parts. From now on, we assume that $A_T^*$ and therefore, also $A_T$, as in Lemma \ref{lemma_distance_P_B} hold. The constants we use for Theorem 1 are
\begin{equation}
\label{constants}
C_1 > 4, \quad C_2 = \frac{1}{\sqrt{6}} - \frac{4}{\underline{C}},
\end{equation}
where $\underline{C}$ is as in assumption (A1).
\vspace{0.05in}
\\
{\textbf{Step 1.1:}} Firstly, $\forall j \in \left\lbrace 1, \ldots, N\right\rbrace$, we define the intervals 
\begin{equation}
\label{isolating_intervals}
I_{j}^R = \left[r_j + \frac{\delta_{j+1}}{3}, r_j + 2\frac{\delta_{j+1}}{3}\right), \qquad I_{j}^L = \left(r_j - 2\frac{\delta_j}{3}, r_j - \frac{\delta_j}{3}\right].
\end{equation}
In order for $I_j^R$ and $I_j^L$ to have at least one point, we actually implicitly require that $\delta_j > 3, \quad \forall j = 1, \ldots, N + 1$, which is the case for sufficiently large $T$; see assumption (A1). Since the lengths of $I_j^R$ and $I_j^L$ as in \eqref{isolating_intervals} are equal to $\delta_{j+1}/3$ and $\delta_j/3$, respectively, and taking $\lambda_T \leq \delta_T/3$, where $\delta_T$ is as in Equation (6) of the main paper the minimum distance between two change-points, then the NPID method ensures that for $K=\left\lceil T/\lambda_T  \right\rceil$ and $k,m \in \left\lbrace 1,\ldots,K\right\rbrace$, there exist at least one $c_k^r = k\lambda_T + 1$ and at least one $c_m^l = T-m\lambda_T$ that are in $I_j^R$ and $I_j^L$, respectively, $\forall j =1,\ldots,N$.

Depending on whether $r_1 \leq T - r_N$ then $r_1$ or $r_N$ will first get isolated in a right- or left-expanding interval, respectively. W.l.o.g., assume that $r_1 \leq T - r_N$. Our aim is to first show that there will be at least an interval of the form $[1,c_{\tilde{k}}^r]$, for $\tilde{k} \in \left\lbrace 1,\ldots,K \right\rbrace$, which contains only $r_1$ and no other change-point, such that
$\max_{1\leq t < c_{\tilde{k}}^r}L\left(\left|\boldsymbol{\tilde{B}_{1,c_{\tilde{k}}^r}^{t}}\right|\right) > \zeta_T$, where for any $1 \leq s \leq b < e \leq T$, $\boldsymbol{B_{s,e}^{b}}$ is as in \eqref{notation_vector}. As already mentioned, our method due to its expansion approach naturally ensures that $\exists k \in \left\lbrace 1,\ldots, K\right\rbrace$ such that $c_k^r \in I_1^R$. There is no other change-point in $[1,c_k^r]$ apart from $r_1$. We will now show that for $b = {\rm argmax}_{1 \leq t \leq c_k^r}L(|\boldsymbol{\tilde{B}_{1, c_k^r}^{t}}|)$, then $L(|\boldsymbol{\tilde{B}_{1, c_k^r}^{b}}|) > \zeta_T$. Using Lemma \ref{lemma_distance_P_B}, we have that
\begin{equation}
\label{thresholdpassing_firststep}
L\left(\left|\boldsymbol{\tilde{B}_{1, c_k^r}^{b}}\right|\right) \geq L\left(\left|\boldsymbol{\tilde{B}_{1, c_k^r}^{r_1}}\right|\right) \geq L\left(\left|\boldsymbol{\tilde{F}_{1, c_k^r}^{r_1}}\right|\right)  - 4\sqrt{\log\;T}
\end{equation}
But,
\begin{align}
\label{midstep1}
\nonumber L\left(\left|\boldsymbol{\tilde{F}_{1,c_k^r}^{r_1}}\right|\right) & = \sqrt{\frac{(c_k^r - r_1)r_1}{c_k^r}}L\left(\left|\boldsymbol{\Delta_1}\right|\right) \geq \sqrt{\frac{(c_k^r - r_1)r_1}{2\max\left\lbrace c_k^r - r_1, r_1 \right\rbrace}}L\left(\left|\boldsymbol{\Delta_1}\right|\right)\\
& = \sqrt{\frac{\min\left\lbrace c_k^r - r_1, r_1 \right\rbrace}{2}}L\left(\left|\boldsymbol{\Delta_1}\right|\right).
\end{align}
From our notation of $r_0 = 0$ and of $\delta_j$ as in Equation (9) of the main paper, we know that $r_1 = \delta_1$. In addition, since $c_k^r \in I_1^R$, then
\begin{equation}
\nonumber \frac{\delta_{2}}{3} \leq c_k^r - r_1 < 2\frac{\delta_{2}}{3},
\end{equation}
meaning that
\begin{equation}
\label{mindistance1}
\min\left\lbrace c_k^r - r_1, r_1 \right\rbrace \geq \frac{1}{3}\min\left\lbrace \delta_1, \delta_2\right\rbrace.
\end{equation}
The result in \eqref{thresholdpassing_firststep}, the assumption (A1) and the application of \eqref{mindistance1} to \eqref{midstep1} yield, for $\underline{m}_T$ as in (A1), with probability going to 1 as $T \to \infty$,
\begin{align}
\label{thresholdpassing}
\nonumber L\left(\left|\boldsymbol{\tilde{B}_{1,c_k^r}^{b}}\right|\right) & \geq \sqrt{\frac{\min\left\lbrace \delta_1,\delta_2\right\rbrace}{6}}L\left(\left|\boldsymbol{\Delta_1}\right|\right) - 4\sqrt{\log T} \geq \sqrt{\frac{\min\left\lbrace \delta_1,\delta_2\right\rbrace}{6}}\frac{\tilde{C}_1}{\gamma_{1,T}} - 4\sqrt{\log T}\\
\nonumber & \geq \frac{\underline{m}_T}{\sqrt{6}} - 4\sqrt{\log T} = \left(\frac{1}{\sqrt{6}} - \frac{4\sqrt{\log T}}{\underline{m}_T}\right)\underline{m}_T\\
& \geq \left(\frac{1}{\sqrt{6}} - \frac{4}{\underline{C}}\right)\underline{m}_T = C_2\underline{m}_T > \zeta_T.
\end{align}
Therefore, there will be an interval of the form $[1,c_{\tilde{k}}^r]$, with $c_{\tilde{k}}^r > r_1$, such that $[1,c_{\tilde{k}}^r]$ contains only $r_1$ and $\max_{1\leq b < c_{\tilde{k}}^r}L\left(\left|\boldsymbol{\tilde{B}_{1,c_{\tilde{k}}^r}^{b}}\right|\right) > \zeta_T$. Let us, for $k^* \in \left\lbrace 1,\ldots, K \right\rbrace$, to denote by $c_{k^*}^r \leq c_{\tilde{k}}^r$ the first right-expanding point where this happens and let $b_1 = {\rm argmax}_{1\leq t < c_{k^*}^r}L\left(\left|\boldsymbol{\tilde{B}_{1,c_{k^*}^r}^{t}}\right|\right)$ with $L\left(\left|\boldsymbol{\tilde{B}_{1,c_{k^*}^r}^{b_1}}\right|\right) > \zeta_T$.
\vspace{0.1in}
\\
{\textbf{Step 1.2:}} We will now show that $|b_1 - r_1|/\gamma_{1,T}^2 = \mathcal{O}_{{\rm p}}\left(1\right)$ through a more general result which holds for any interval $[s,e)$ that has only one true change-point and also, as in Step 1.1, the maximum aggregated (using the mean-dominant norm $L(\cdot)$) contrast function value for a point within $[s,e)$ is greater that the threshold $\zeta_T$. From now on, the only change-point in $[s,e)$ is denoted by $r_j$ and $\hat{r}_j$ is the point that has the maximum CUSUM value in that interval with its value exceeding $\zeta_T$. For ease of presentation, we also denote by $l := e-s+1$. Our aim is to show that for $r_j^* = \frac{r_j - s + 1}{e-s+1}$,
\begin{align}
\label{main_Result_consistency}
\liminf_{T\rightarrow\infty}\Prob\left(\Lambda(d)\right)\rightarrow 1,
\end{align}
where $d \rightarrow \infty$ as $T \rightarrow \infty$. The constant $C_{*} > 0$ is independent of $T$ and
\begin{align}
\label{LambdaTd}
\nonumber \Lambda(d) & := \left\lbrace\vphantom{\sup_{i=1,\ldots,T}\left|D_{s,e}^{b^*}(X_i)\right|} L\left(\left|\boldsymbol{D_{s,e}^{b^*}}\right|\right) - L\left(\left|\boldsymbol{D_{s,e}^{r_j^*}}\right|\right) \leq -C_{*}\left|b^* - r_j^*\right|/\gamma_{j,T},\right.\\
& \left.\qquad\qquad \forall b^* \in J_{s,e}\setminus N(d)\qquad {{\rm and}}\;L\left(\left|\boldsymbol{D_{s,e}^{r_j^*}}\right|\right)\geq C_*/\gamma_{j,T}\vphantom{\sup_{q=1,\ldots,Q}\left|D_{s,e}^{b^*}(l_q)\right|}\right\rbrace.
\end{align}
with
\begin{equation}
\label{Ntd}
N(d):=\left\lbrace b^*\in J_{s,e}:\left|b^*-r_j^*\right|\leq \frac{d\gamma_{j,T}^2}{(e-s+1)}\right\rbrace.
\end{equation}
Proving the result in \eqref{main_Result_consistency} and using that $r_j^* = \frac{r_j-s+1}{e-s+1}$ and $b^* = \frac{b-s+1}{e-s+1}$ for any $b\in [s,e)$, then we can get that $|\hat{r}_j - r_j|/\gamma_{j,T}^2 = \mathcal{O}_{{\rm p}}\left(1\right)$.

For $\boldsymbol{D_{s,e}^{r_j^*}}$ as in \eqref{notation_vector}, and using that $\forall u \in \mathbb{R}$, then $\tilde{F}_{s,e}^{r_j}(u) = -\sqrt{\frac{(e - r_j)(r_j -s+1)}{l}}\Delta_j(u)$, we have, due to (A1), with probability tending to 1 as $T \to \infty$, that
\begin{align}
\nonumber  L\left(\left|\boldsymbol{D_{s,e}^{r_j^*}}\right|\right) & = \frac{1}{\sqrt{l}}L\left(\left|\boldsymbol{\tilde{B}_{s,e}^{r_j}}\right|\right)\\
\nonumber & = \frac{1}{\sqrt{l}}L\left(\left|\boldsymbol{\tilde{B}_{s,e}^{r_j}} - \boldsymbol{\tilde{F}_{s,e}^{r_j}} + \boldsymbol{\tilde{F}_{s,e}^{r_j}}\right|\right)\\
\nonumber & \geq \frac{1}{\sqrt{l}}\left(L\left(\left|\boldsymbol{\tilde{F}_{s,e}^{r_j}}\right|\right) - L\left(\left|\boldsymbol{\tilde{B}_{s,e}^{r_j}} - \boldsymbol{\tilde{F}_{s,e}^{r_j}}\right|\right)\right)\\
\nonumber & \geq \frac{1}{\sqrt{l}}\left(\sqrt{\frac{(e - r_j)(r_j -s+1)}{l}}\frac{\tilde{C}_j}{\gamma_{j,T}} - L\left(\left|\boldsymbol{\tilde{B}_{s,e}^{r_j}} - \boldsymbol{\tilde{F}_{s,e}^{r_j}}\right|\right)\right).
\end{align}
Because $l \rightarrow \infty$ as $T \rightarrow \infty$ means that $\frac{1}{\sqrt{l}} = o(1)$ and we have using \eqref{probability_order_CUSUM} that for $w(r_j^*) = \sqrt{r_j^*(1-r_j^*)}$
\begin{equation}
\label{first_important_result}
L\left(\left|\boldsymbol{D_{s,e}^{r_j^*}}\right|\right) \geq w(r_j^*)\frac{\tilde{C}_j}{\gamma_{j,T}} \geq C^*/\gamma_{j,T}
\end{equation}
with probability tending to 1 for any arbitrary $C^* \in (0,w(r_j^*))$. For $\boldsymbol{M_{s,e}^{b^*}} = \boldsymbol{D_{s,e}^{b^*}} - \boldsymbol{\Delta_{s,e}^{b^*}}$, we have that
\begin{align}
\nonumber & L\left(\left|\boldsymbol{D_{s,e}^{b^*}}\right|\right) = L\left(\left|\boldsymbol{M_{s,e}^{b^*}} - \boldsymbol{M_{s,e}^{r_j^*}} + \boldsymbol{M_{s,e}^{r_j^*}} + \boldsymbol{\Delta_{s,e}^{b^*}}\right|\right)\\
\nonumber & = L\left(\left|\boldsymbol{M_{s,e}^{b^*}} - \boldsymbol{M_{s,e}^{r_j^*}} + \boldsymbol{M_{s,e}^{r_j^*}} + \frac{\rho(b^*)}{w(r_j^*)}\boldsymbol{\Delta_{s,e}^{r_j^*}}\right|\right)\\
\nonumber & = L\left(\left|\boldsymbol{M_{s,e}^{b^*}} - \boldsymbol{M_{s,e}^{r_j^*}} + \boldsymbol{M_{s,e}^{r_j^*}} + \frac{\rho(b^*)}{w(r_j^*)}\left(\boldsymbol{D_{s,e}^{r_j^*}} - \boldsymbol{M_{s,e}^{r_j^*}}\right)\right|\right)\\
\nonumber & \leq L\left(\left|\boldsymbol{M_{s,e}^{b^*}} - \boldsymbol{M_{s,e}^{r_j^*}}\right|\right) + L\left(\left|\frac{\rho(r_j^*) - \rho(b^*)}{w(r_j^*)}\boldsymbol{M_{s,e}^{r_j^*}}\right|\right) + \frac{\rho(b^*)}{w(r_j^*)}L\left(\left|\boldsymbol{D_{s,e}^{r_j^*}}\right|\right).
\end{align}
Therefore,
\begin{align}
\label{diff_mid_step}
\nonumber & L\left(\left|\boldsymbol{D_{s,e}^{b^*}}\right|\right) - L\left(\left|\boldsymbol{D_{s,e}^{r_j^*}}\right|\right)\\
\nonumber & \leq L\left(\left|\boldsymbol{M_{s,e}^{b^*}} - \boldsymbol{M_{s,e}^{r_j^*}}\right|\right) + L\left(\left|\frac{\rho(r_j^*) - \rho(b^*)}{w(r_j^*)}\boldsymbol{M_{s,e}^{r_j^*}}\right|\right) + \frac{\rho(b^*) - \rho(r_j^*)}{w(r_j^*)}L\left(\left|\boldsymbol{D_{s,e}^{r_j^*}}\right|\right)\\
& = L\left(\left|\boldsymbol{M_{s,e}^{b^*}} - \boldsymbol{M_{s,e}^{r_j^*}}\right|\right) + \frac{\rho(r_j^*) - \rho(b^*)}{w(r_j^*)}\left(L\left(\left|\boldsymbol{M_{s,e}^{r_j^*}}\right|\right) -L\left(\left|\boldsymbol{D_{s,e}^{r_j^*}}\right|\right)\right).
\end{align}
However, we know that $$L\left(\left|\boldsymbol{D_{s,e}^{r_j^*}}\right|\right) \geq L\left(\left|\boldsymbol{\Delta_{s,e}^{r_j^*}}\right|\right) - L\left(\left|\boldsymbol{M_{s,e}^{r_j^*}}\right|\right)$$ and continuing from \eqref{diff_mid_step}, we obtain that
\begin{align}
\label{diff_mid_step2}
\nonumber & L\left(\left|\boldsymbol{D_{s,e}^{b^*}}\right|\right) - L\left(\left|\boldsymbol{D_{s,e}^{r_j^*}}\right|\right)\\
\nonumber & \leq L\left(\left|\boldsymbol{M_{s,e}^{b^*}} - \boldsymbol{M_{s,e}^{r_j^*}}\right|\right)\\
\nonumber & \qquad + \frac{\rho(r_j^*) - \rho(b^*)}{w(r_j^*)}\left(2L\left(\left|\boldsymbol{M_{s,e}^{r_j^*}}\right|\right) - L\left(\left|\boldsymbol{\Delta_{s,e}^{r_j^*}}\right|\right)\right)\\
& = L\left(\left|\boldsymbol{M_{s,e}^{b^*}} - \boldsymbol{M_{s,e}^{r_j^*}}\right|\right) - (\rho(r_j^*) - \rho(b^*))\left(\frac{\tilde{C}_j}{\gamma_{j,T}} - \frac{2}{w(r_j^*)}L\left(\left|\boldsymbol{M_{s,e}^{r_j^*}}\right|\right)\right).
\end{align}
Using the result in \eqref{probability_order_CUSUM} from Lemma \ref{lemma_orderB_P}, we have that since $l \rightarrow \infty$ as $T \rightarrow \infty$,
\begin{align}
\nonumber & \frac{1}{w(r_j^*)}L\left(\left|\boldsymbol{M_{s,e}^{r_j^*}}\right|\right) = \frac{1}{w(r_j^*)\sqrt{l}}L\left(\left|\boldsymbol{\tilde{B}_{s,e}^{r_j}} - \boldsymbol{\tilde{F}_{s,e}^{r_j}}\right|\right) = o_{{\rm p}}(1).
\end{align}
Therefore, continuing from \eqref{diff_mid_step2},
\begin{align}
\nonumber & L\left(\left|\boldsymbol{D_{s,e}^{b^*}}\right|\right) - L\left(\left|\boldsymbol{D_{s,e}^{r_j^*}}\right|\right)\\
\nonumber & \leq L\left(\left|\boldsymbol{M_{s,e}^{b^*}} - \boldsymbol{M_{s,e}^{r_j^*}}\right|\right) - (\rho(r_j^*) - \rho(b^*))\left(\frac{\tilde{C}_j}{\gamma_{j,T}} + o_{{\rm p}}(1)\right)
\end{align}
for all $b^* \in J_{s, e}$ with probability tending to 1. Using now the result of Lemma \ref{lemma_rho_relation}, we have that
\begin{align}
\label{first_prob}
\nonumber & \Prob\left(L\left(\left|\boldsymbol{D_{s,e}^{b^*}}\right|\right) - L\left(\left|\boldsymbol{D_{s,e}^{r_j^*}}\right|\right) \leq L\left(\left|\boldsymbol{M_{s,e}^{b^*}} - \boldsymbol{M_{s,e}^{r_j^*}}\right|\right) \right.\\
& \qquad\qquad\qquad\qquad\qquad\qquad\qquad \left. -\frac{\left|b^*-r_j^*\right|}{\gamma_{j,T}}C'_j \;\;{\rm for\;all}\;b^*\right)\rightarrow 1,
\end{align}
where $C'_j = C\tilde{C}_jC_0'$ for any $C_0'\in (0,1)$. For the term $L\left(\left|\boldsymbol{M_{s,e}^{b^*}} - \boldsymbol{M_{s,e}^{r_j^*}}\right|\right)$ in \eqref{first_prob}, we have that
\begin{align}
\nonumber L\left(\left|\boldsymbol{M_{s,e}^{b^*}} - \boldsymbol{M_{s,e}^{r_j^*}}\right|\right) & \leq L\left(\left|\boldsymbol{M_{s,e}^{b^*}}\right|\right) + L\left(\left|\boldsymbol{M_{s,e}^{r_j^*}}\right|\right)\\
\nonumber & \leq 2\max_{j \in J_{s,e}}L\left(\left|\boldsymbol{M_{s,e}^{j}}\right|\right)\\
\nonumber & \leq 2\max_{j \in J_{s,e}}L_{\infty}\left(\left|\boldsymbol{M_{s,e}^{j}}\right|\right)\\
\nonumber & \leq K(\log \log l)^{1/2}(l)^{-1/2}
\end{align}
with the last inequality coming from Lemma 2 in \cite{Dumbgen1991_supp}, where $K > 0$ is independent of the sample size. From Assumption (A1), 
we can deduce that $\gamma_{j,T}\sqrt{\log T}(\min\{\delta_j, \delta_{j+1}\})^{-1/2} = O(1)$. Therefore, $\gamma_{j,T}(\log \log l)^{1/2}(l)^{-1/2} = o(1)$ and therefore, \eqref{first_prob} implies that there is a constant $C'' > 0$ such that
\begin{align}
\label{second_prob}
\nonumber & \Prob\left(\vphantom{\sup_{q = 1, \ldots, Q}\left|D_{s,e}^{b^*,q}\right| - \sup_{q = 1, \ldots, Q}\left|D_{s,e}^{r_j^*,q}\right|}L\left(\left|\boldsymbol{D_{s,e}^{b^*}}\right|\right) - L\left(\left|\boldsymbol{D_{s,e}^{r_j^*}}\right|\right) \leq - C''\left|b^*-r_j^*\right|/\gamma_{j,T}\right.\\
& \qquad\qquad\qquad\qquad\qquad\qquad\qquad\left. {\rm for\;all}\;b^* \in J_{s,e}\setminus[\alpha,1-\alpha]\vphantom{\sup_{q = 1, \ldots, Q}\left|D_{s,e}^{b^*,q}\right| - \sup_{q = 1, \ldots, Q}\left|D_{s,e}^{r_j^*,q}\right|}\right)\rightarrow 1
\end{align}
for arbitrary and fixed $\alpha \in (0,1/2)$. This means that it suffices to consider $L\left(\left|\boldsymbol{M_{s,e}^{b^*}} - \boldsymbol{M_{s,e}^{r_j^*}}\right|\right)$ on compact subintervals of $[0,1]$ and the result in \eqref{main_Result_consistency} follows if we show that for arbitrary constants $C''' > 0$ and $\alpha \in (0,1/2)$ then
\begin{align}
\label{assertion_follows}
\nonumber & \liminf_{T \rightarrow \infty}\Prob\left(\vphantom{\sup_{q=1,\ldots,Q}\left|M_{s,e}^{b^*,q} - M_{s,e}^{r_j^*,q}\right|}L\left(\left|\boldsymbol{M_{s,e}^{b^*}} - \boldsymbol{M_{s,e}^{r_j^*}}\right|\right) \leq C'''\left|b^* - r_j^*\right|/\gamma_{j,T} \right.\\
& \qquad\qquad\qquad\qquad \left. {\rm for\;all}\;b^* \in J_{s,e}\cap[\alpha,1-\alpha]\setminus N_T(d)\vphantom{\sup_{q=1,\ldots,Q}\left|M_{s,e}^{b^*,q} - M_{s,e}^{r_j^*,q}\right|}\right)\rightarrow 1
\end{align}
as $d \rightarrow \infty$. W.l.o.g. we take $b^* \leq r_j^*$, and simple calculations yield
\begin{align}
\nonumber  &M_{s,e}^{b^*}(X_i) - M_{s,e}^{r_j^*}(X_i) = \frac{1}{l}\sum_{t=r_j^* l + 1}^{l}\mathbbm{1}_{\left\lbrace X_{t+s-1}\leq X_i \right\rbrace}\left(\sqrt{\frac{b^*}{1-b^*}} - \sqrt{\frac{r_j^*}{1-r_j^*}}\right)\\
\nonumber & \quad + \frac{1}{l}\sum_{t=1}^{r_j^* l}\mathbbm{1}_{\left\lbrace X_{t+s-1}\leq X_i \right\rbrace}\left(\sqrt{\frac{1-r_j^*}{r_j^*}} - \sqrt{\frac{1-b^*}{b^*}}\right)\\
\nonumber & \quad + \frac{1}{\sqrt{b^*(1-b^*)}l}\sum_{t=b^*l+1}^{r_j^*l}\mathbbm{1}_{\left\lbrace X_{t+s-1}\leq X_i \right\rbrace} + \left(\sqrt{r_j^*(1-r_j^*)} - \frac{\sqrt{b^*}(1-r_j^*)}{\sqrt{1-b^*}}\right)\Delta_j(X_i)\\
\nonumber & = \left(\frac{1}{l}\sum_{t=r_j^*l+1}^{l}\mathbbm{1}_{\left\lbrace X_{t+s-1} \leq X_i \right\rbrace} - (1-r_j^*)F_{r_j+1}(X_i)\right)\left(\sqrt{\frac{b^*}{1-b^*}} - \sqrt{\frac{r_j^*}{1-r_j^*}}\right)\\
\nonumber & \quad + \left(\frac{1}{l}\sum_{t=1}^{r_j^*l}\mathbbm{1}_{\left\lbrace X_{t+s-1} \leq X_i \right\rbrace} - r_j^*F_{r_j}(X_i)\right)\left(\sqrt{\frac{1-r_j^*}{r_j^*}} - \sqrt{\frac{1-b^*}{b^*}}\right)\\
\nonumber & \quad + \frac{1}{\sqrt{b^*(1-b^*)}}\left(\frac{1}{l}\sum_{t=b^*l+1}^{r_j^*l}\mathbbm{1}_{\left\lbrace X_{t+s-1}\leq X_i \right\rbrace} - (r_j^* - b^*)F_{r_j}(X_i)\right).
\end{align}
Adding now and subtracting $\frac{1}{\sqrt{r_j^*(1-r_j^*)}}\left(\frac{1}{l}\sum_{t=b^*l+1}^{r_j^*l}\mathbbm{1}_{\left\lbrace X_{t+s-1}\leq X_i \right\rbrace} - (r_j^* - b^*)F_{r_j}(X_i)\right)$, we have that
\begin{align}
\nonumber & M_{s,e}^{b^*}(X_i) - M_{s,e}^{r_j^*}(X_i) = \left(\frac{1}{l}\sum_{t=r_j^*l+1}^{l}\mathbbm{1}_{\left\lbrace X_{t+s-1} \leq X_i \right\rbrace} - (1-r_j^*)F_{r_j+1}(X_i)\right)\left(\sqrt{\frac{b^*}{1-b^*}} - \sqrt{\frac{r_j^*}{1-r_j^*}}\right)\\
\nonumber & \qquad + \left(\frac{1}{l}\sum_{t=1}^{r_j^*l}\mathbbm{1}_{\left\lbrace X_{t+s-1} \leq X_i \right\rbrace} - r_j^*F_{r_j}(X_i)\right)\left(\sqrt{\frac{1-r_j^*}{r_j^*}} - \sqrt{\frac{1-b^*}{b^*}}\right)\\
\nonumber & \qquad + \left(\frac{1}{\sqrt{b^*(1-b^*)}} - \frac{1}{\sqrt{r_j^*(1-r_j^*)}}\right)\left(\frac{1}{l}\sum_{t=b^*l+1}^{r_j^*l}\mathbbm{1}_{\left\lbrace X_{t+s-1}\leq X_i \right\rbrace} - (r_j^* - b^*)F_{r_j}(X_i)\right)\\
\nonumber & \qquad + \frac{1}{\sqrt{r_j^*(1-r_j^*)}}\left(\frac{1}{l}\sum_{t=b^*l+1}^{r_j^*l}\mathbbm{1}_{\left\lbrace X_{t+s-1}\leq X_i \right\rbrace} - (r_j^* - b^*)F_{r_j}(X_i)\right).
\end{align}
The functions $\sqrt{\frac{b^*}{1-b^*}}, \sqrt{\frac{1-b^*}{b^*}}$ and $\frac{1}{\sqrt{b^*(1-b^*)}}$ are Lipschitz continuous on the interval $[\alpha, 1-\alpha]$ for any arbitrary $\alpha \in (0,1/2)$. Therefore, there is a constant $C > 0$ such that for all $b^* \in J_{s,e} \cap [\alpha,1-\alpha]$, we have that
\begin{align}
\label{second_important_ineq}
\nonumber & \sup_{i = 1,\ldots,T}\left|M_{s,e}^{b^*}(X_i) - M_{s,e}^{r_j^*}(X_i) - \frac{1}{\sqrt{r_j^*(1-r_j^*)}}\left(\frac{1}{l}\sum_{t=b^*l+1}^{r_j^*l}\mathbbm{1}_{\left\lbrace X_{t+s-1}\leq X_i \right\rbrace} - (r_j^* - b^*)F_{r_j}(X_i)\right)\right|\\
& \leq C\left|b^* - r_j^*\right|\max_{h \in [0,1]}\sup_{i=1,\ldots,T}A,
\end{align} 
where from now on
\begin{align}
\nonumber & A:= A(h,i) = \left|\mathbbm{1}_{\left\lbrace h \leq r_j^*\right\rbrace}\frac{1}{l}\sum_{t=hl+1}^{r_j^*l}\mathbbm{1}_{\left\lbrace X_{t+s-1}\leq X_i \right\rbrace} - (r_j^* - h)F_{r_j}(X_i)\right.\\
\nonumber & \left.\qquad\qquad\qquad\qquad\qquad + \mathbbm{1}_{\left\lbrace h > r_j^*\right\rbrace}\frac{1}{l}\sum_{t=r_j^*l+1}^{h^*l}\mathbbm{1}_{\left\lbrace X_{t+s-1}\leq X_i \right\rbrace} - (h - r_j^*)F_{r_j+1}(X_i)\right|
\end{align}
We will now show that $\max_{h \in [0,1]}\sup_{i=1,\ldots,T}A = \mathcal{O}_{{\rm p}}\left((l)^{-1/2}\right)$. Due to the fact that we have a process with independent increments, then as shown in Lemma 2 of \cite{Dumbgen1991_supp} we know that for any $\eta > 0$,
\begin{align}
\nonumber \Prob\left(\max_{h \in [0,1]}\sup_{i=1,\ldots,T}A > \eta \sqrt{l}\sqrt{1-r_j^*}\right)\leq K_1\exp\left\lbrace-\frac{K_2\eta^2}{4}\right\rbrace,
\end{align}
where $K_1$ and $K_2$ are just positive constants. Applying the above maximal inequality to the result in \eqref{second_important_ineq}, and with $\boldsymbol{\mathbbm{1}}_{X} : = \left(\mathbbm{1}_{\left\lbrace X \leq X_1 \right\rbrace}, \mathbbm{1}_{\left\lbrace X \leq X_2 \right\rbrace}, \ldots, \mathbbm{1}_{\left\lbrace X \leq X_T \right\rbrace}\right)$ and $\boldsymbol{F_{r_j}} := \left(F_{r_j}(X_1), F_{r_j}(X_2), \ldots, F_{r_j}(X_T)\right)$, it follows that for every $\alpha \in (0,1/2)$,
\begin{align}
\nonumber & L\left(\left|\boldsymbol{M_{s,e}^{b^*}} - \boldsymbol{M_{s,e}^{r_j^*}} - \frac{1}{\sqrt{r_j^*(1-r_j^*)}}\left(\frac{1}{l}\sum_{t=b^*l+1}^{r_j^*l}\boldsymbol{\mathbbm{1}}_{X_{t+s-1}} - (r_j^* - b^*)\boldsymbol{F_{r_j}}\right)\right|\right)\\
\nonumber & \leq \sup_{i = 1,\ldots,T}\left|M_{s,e}^{b^*}(X_i) - M_{s,e}^{r_j^*}(X_i) - \frac{1}{\sqrt{r_j^*(1-r_j^*)}}\left(\frac{1}{l}\sum_{t=b^*l+1}^{r_j^*l}\mathbbm{1}_{\left\lbrace X_{t+s-1} \leq X_i \right\rbrace} - (r_j^* - b^*)F_{r_j}(X_i)\right)\right|\\
\nonumber & \leq C\left|b^* - r_j^*\right|\mathcal{O}{{\rm p}}\left((l)^{-1/2}\right),
\end{align}
where $\mathcal{O}_{{\rm p}}\left((l)^{-1/2}\right)$ denotes a random variable that does not depend on $b^* \in J_{s,e}$. From the result above, it is easy to see that \eqref{assertion_follows} would follow from the fact that for arbitrary constant $C''' > 0$
\begin{equation}
\label{assertion_follows2}
\limsup_{T \rightarrow \infty}\Prob\left(\sup_{i=1,\ldots,T}\left|lA(b^*,i)\right| > C'''l\left|b^* - r_j^*\right|/\gamma_{j,T}\;\;{\rm for\;some}\;b^* \in J_{s,e}\setminus N_T(d)\right) \rightarrow 0
\end{equation}
as $d \rightarrow \infty$. Let us now denote by $Y_1, Y_2, \ldots$ and $\tilde{Y}_1, \tilde{Y}_2, \ldots$ to be independent random variables from the distribution of our data in $(r_{j-1}, r_j]$ and $(r_{j}, r_{j+1}]$, respectively. We define
\begin{equation}
R_{m}(X_i) := \sum_{t=1}^{m}\left(\mathbbm{1}_{\left\lbrace Y_{t}\leq X_i \right\rbrace} - F_{r_j}(X_i)\right), \qquad \tilde{R}_{m}(X_i) := \sum_{t=1}^{m}\left(\mathbbm{1}_{\left\lbrace \tilde{Y}_{t}\leq X_i \right\rbrace} - F_{r_{j+1}}(X_i)\right).
\end{equation}
Then, because
\begin{align}
\nonumber lA(b^*,i) = & \left|\mathbbm{1}_{\left\lbrace h \leq r_j^*\right\rbrace}\sum_{t=hl+1}^{r_j^*l}\left(\mathbbm{1}_{\left\lbrace X_{t+s-1}\leq X_i \right\rbrace} - F_{r_j}(X_i)\right)\right.\\
\nonumber & \qquad \left. + \mathbbm{1}_{\left\lbrace h > r_j^*\right\rbrace}\sum_{t=r_j^*l+1}^{h^*l}\left(\mathbbm{1}_{\left\lbrace X_{t+s-1}\leq X_i \right\rbrace} - F_{r_j+1}(X_i)\right)\right|,
\end{align} 
continuing from \eqref{assertion_follows2} we have that
\begin{align}
\nonumber & \Prob\left(\sup_{i=1,\ldots,T}\left|lA(b^*,i)\right| > C'''l\left|b^* - r_j^*\right|/\gamma_{j,T}\;\;{\rm for\;some}\;b^* \in J_{s,e}\setminus N_T(d)\right)\\
\nonumber & \leq \Prob\left(\max_{m\geq d\gamma_{j,T}^2}\frac{1}{m}\sup_{i=1,\ldots,T}\left|R_m(X_i)\right| \geq  C'''/\gamma_{j,T}\right) + \Prob\left(\max_{m\geq d\gamma_{j,T}^2}\frac{1}{m}\sup_{i=1,\ldots,T}\left|\tilde{R}_m(X_i)\right| \geq  C'''/\gamma_{j,T}\right).
\end{align}
It now follows from the result in \cite{Dumbgen1991_supp}, page 1489, that
\begin{equation}
\label{expectation_result}
\Prob\left(\max_{m\geq d\gamma_{j,T}^2}\frac{1}{m}\sup_{i=1,\ldots,T}\left|R_m(X_i)\right| \geq  C'''/\gamma_{j,T}\right) \leq \frac{\gamma_{j,T}}{C'''}\E\left(\frac{1}{m_0}\sup_{i=1,\ldots,T}\left|R_{m_0}(X_i)\right|\right),
\end{equation}
where $m_0:=\min\left\lbrace m\in\mathbb{N}: m\geq d\gamma_{j,T}^2\right\rbrace$. The same result holds for $\tilde{R}_m(X_i)$ in the place of $R_m(X_i)$. Now, due to the Dvoretzky-Kiefer-Wolfowitz inequality we have that for any $\epsilon > 0$
\begin{align}
\nonumber \Prob\left(\frac{1}{\sqrt{m_0}}\sup_{i=1,\ldots,T}\left|R_{m_0}(X_i)\right| > \epsilon\right) & = \Prob\left(\sup_{i=1,\ldots,T}\left|\frac{1}{m_0}\sum_{t=1}^{m_0}\mathbbm{1}_{\left\lbrace Y_{t}\leq X_i \right\rbrace} - F_{r_j}(X_i)\right| > \frac{\epsilon}{\sqrt{m_0}}\right)\\
\nonumber & \leq \Prob\left(\sup_{x \in \mathbb{R}}\left|\frac{1}{m_0}\sum_{t=1}^{m_0}\mathbbm{1}_{\left\lbrace Y_{t}\leq x \right\rbrace} - P(X_{r_j}\leq x)\right| > \frac{\epsilon}{\sqrt{m_0}}\right)\\
\nonumber & \leq 2\exp\left\lbrace-2\epsilon^2\right\rbrace.
\end{align}
Using this result, then we get that for the right-hand side of \eqref{expectation_result} the following holds
\begin{align}
\nonumber \frac{\gamma_{j,T}}{C'''}\E\left(\frac{1}{m_0}\sup_{i=1,\ldots,T}\left|R_{m_0}(X_i)\right|\right) & = \frac{\gamma_{j,T}}{C'''\sqrt{m_0}}\E\left(\frac{1}{\sqrt{m_0}}\sup_{i=1,\ldots,T}\left|R_{m_0}(X_i)\right|\right)\\
\nonumber & \leq \frac{2\gamma_{j,T}}{C'''\sqrt{m_0}}\int\exp\left\lbrace-2x^2\right\rbrace{\mathrm{d}}x\\
\nonumber & \leq \frac{2}{\sqrt{d}C'''}\int\exp\left\lbrace-2x^2\right\rbrace{\mathrm{d}}x,
\end{align}
since $m_0 \geq d\gamma_{j,T}^2$. It is now straightforward that the right-hand side of the above inequality goes to 0 as $d \rightarrow \infty$ and the result in \eqref{main_Result_consistency} follows. Therefore,
\begin{equation}
\label{consistency_distance}
(\hat{r}_j - r_j)/\gamma_{j,T}^2 = \mathcal{O}_{{\rm p}}\left(1\right), \forall j\in \left\lbrace 1,\ldots,N\right\rbrace.
\end{equation}
Therefore, for $\lambda_T \leq (\delta_T/3)$ we have proven that working under the set $A_T^*$ (which implies the set $A_T$), and due to (A1), there will be an interval of the form $[1,c_{k^*}^r]$, with $L\left(\left|\boldsymbol{\tilde{B}_{1,c_{k^*}^r}^{b_1}}\right|\right)>\zeta_T$ with probability tending to 1 as $T \to \infty$, where $b_1$ is an estimation of $r_1$, that also satisfies $|b_1 - r_1|/\gamma_{1,T}^2 = \mathcal{O}_{{\rm p}}\left(1\right)$.
\vspace{0.1in}
\\
{\textbf{Step 1.3:}}  After detecting the first change-point, NPID follows the exact same process as in Steps 1.1 and 1.2, but only in the set $[c_{k^*}^r,T]$, which contains $r_2, r_3, \ldots, r_N$. This means that we bypass, without checking for possible change-points, the interval $[b_1+1, c_{k^*}^r)$. However, we need to prove that:
\begin{itemize}
\item[(S.1)] There is no change-point in $[b_1 + 1,c_{k^*}^r)$, apart from maybe the already detected $r_1$;
\item[(S.2)] $c_{k^*}^r$ is at a location which allows for detection of $r_2$.
\end{itemize}
{\textbf{For (S.1):}} We will split the explanation into two cases with respect to the location of $b_1$.\\
{\textbf{Case 1:}} $b_1 < r_1 < c_{k^*}^r$. Using \eqref{main_Result_consistency}, \eqref{LambdaTd} and \eqref{Ntd} and imposing the condition
\begin{equation}
\label{condition_delta1}
\Prob\left(\min\{\delta_j,\delta_{j+1}\} > 3d\gamma_{j,T}^2\right) \xrightarrow[T\rightarrow \infty]{}  1, \forall j \in \{1,\ldots,N\}
\end{equation}
for a $d$ which goes to infinity as $T \rightarrow \infty$, then since $c_{\tilde{k}}^r \in I_1^R$, we have, with probability going to 1 as $T \rightarrow \infty$, that
\begin{equation}
\nonumber c_{k^*}^r - b_1 \leq c_{\tilde{k}}^r - b_1 = c_{\tilde{k}}^r - r_1 + r_1 - b_1 < 2\frac{\delta_2}{3} + d\gamma_{1,T}^2 < \delta_2.
\end{equation}   
Since $r_2 - r_1 = \delta_2$ and $r_1$ is already in $[b_1+1,c_{k^*}^r)$, then there is no other change-point in $[b_1+1,c_{k^*}^r)$ apart from $r_1$. We need to highlight that in the case of $d$ being of order at most $\mathcal{O}(\log T)$, then the result in \eqref{condition_delta1} is not in fact an extra assumption and is satisfied through (A1).\\
{\textbf{Case 2:}} $r_1 \leq b_1 < c_{k^*}^r$. Since $c_{\tilde{k}}^r \in I_1^R$, then $c_{k^*}^r - r_1 \leq c_{\tilde{k}}^r - r_1 < 2\delta_2/3$, which means that apart from $r_1$ there is no other change-point in $[r_1,c_{k^*}^r)$. With $r_1 \leq b_1$, then $[b_1+1,c_{k^*}^r)$ does not have any change-point.

Cases 1 and 2 above show that no matter the location of $b_1$, there is no change-point in $[b_1+1,c_{k^*}^r)$ other than possibly the previously detected $r_1$. Similarly to the approach in Steps 1.1 and 1.2, our method applied now in $[c_{k^*}^r,T]$,  will first isolate $r_2$ or $r_N$ depending on whether $r_2 - c_{k^*}^r$ is smaller or larger than $T-r_N$. If $T-r_N < r_2 - c_{k^*}^r$ then $r_N$ will get isolated first in a left-expanding interval and the procedure to show its detection is exactly the same as in Step 1.1 where we explained the detection of $r_1$. Therefore, w.l.o.g. and also for the sake of showing (S.2) let us assume that $r_2 - c_{k^*}^r \leq T-r_N$.
\vspace{0.1in}
\\
{\textbf{For (S.2):}} With ${\rm{I}}_{s,e}^r$ as in (2.1) of \cite{Anastasiou_Fryzlewicz_supp}, there exists $c_{k_2}^r \in {\rm{I}}_{c_{k^*}^r,T}^r$ such that $c_{k_2}^r \in I_2^R$, with $I_{j}^R$ defined in \eqref{isolating_intervals}. We will show that $r_2$ gets detected in $[c_{k^*}^r,c_{k^*_2}^r]$, for $k_2^* \leq k_2$ and its detection, denoted by $b_2$, satisfies $\left|b_2 - r_2\right|/\gamma_{2,T}^2 = \mathcal{O}_{{\rm p}}\left(1\right)$, with $\gamma_{j,T}$ as in Assumption (A1). Following similar steps as in \eqref{midstep1}, we have that for $\tilde{b}_2 = {\rm argmax}_{c_{k^*}^r\leq t < c_{k_2}^r}L\left(\left|\boldsymbol{\tilde{B}_{c_{k^*}^r,c_{k_2}^r}^{t}}\right|\right)$,
\begin{align}
\label{midstepforr2}
\nonumber & L\left(\left|\boldsymbol{\tilde{B}_{c_{k^*}^r,c_{k_2}^r}^{\tilde{b}_2}}\right|\right|\\
& \geq L\left(\left|\boldsymbol{\tilde{F}_{c_{k^*}^r,c_{k_2}^r}^{r_2}}\right|\right) - 4\sqrt{\log T} \geq \sqrt{\frac{\min\left\lbrace c_{k_2}^r-r_2,r_2-c_{k^*}^r+1 \right\rbrace}{2}}L(|\boldsymbol{\Delta_2}|) - 4\sqrt{\log T}.
\end{align}
By construction,
\begin{align}
\nonumber & c_{k_2}^r - r_2 \geq \frac{\delta_3}{3}\\
\nonumber & r_2 - c_{k^*}^r + 1 \geq r_2 - c_{\tilde{k}}^r + 1 = r_2 - r_1 - (c_{\tilde{k}}^r - r_1) + 1 = \delta_2 - (c_{\tilde{k}}^r - r_1) + 1 > \delta_2 - 2\frac{\delta_2}{3} + 1\\
\nonumber & \qquad\qquad\;\quad > \frac{\delta_2}{3},
\end{align}
which means that $\min\left\lbrace c_{k_2}^r-r_2,r_2-c_{k^*}^r+1 \right\rbrace \geq \frac{1}{3}\min\left\lbrace \delta_2, \delta_3\right\rbrace$ and therefore continuing from \eqref{midstepforr2}, 
\begin{align}
\label{thresholdpassing2}
\nonumber L\left(\left|\boldsymbol{\tilde{B}_{c_{k^*}^r,c_{k_2}^r}^{\tilde{b}_2}}\right|\right) & \geq \sqrt{\frac{\min\left\lbrace\delta_2,\delta_3\right\rbrace}{6}}L(|\boldsymbol{\Delta_2}|) - 4\sqrt{\log T}\\
\nonumber & \geq \sqrt{\frac{\min\{\delta_2,\delta_3\}}{6}}\frac{\tilde{C}_2}{\gamma_{2,T}} - 4\sqrt{\log T}\\
\nonumber & \geq \left(\frac{1}{\sqrt{6}} - \frac{4\sqrt{\log T}}{\underline{m}_T}\right)\underline{m}_T\\
& \geq \left(\frac{1}{\sqrt{6}}-\frac{4}{\underline{C}}\right)\underline{m}_T = C_2\underline{m}_T >\zeta_T.
\end{align}
Therefore, based on (A1), for a $c_{\tilde{k}_2}^r \in {\rm{I}}_{c_{k^*}^r,T}^r$ we have shown that there exists an interval of the form $[c_{k^*}^r,c_{\tilde{k}_2}^r]$, with $\max_{c_{k^*}^r\leq b <c_{\tilde{k}_2}^r}L\left(\left|\boldsymbol{\tilde{B}_{c_{k^*}^r,c_{\tilde{k}_2}^r}^{b}}\right|\right) > \zeta_T$ with probability tending to 1. Let us denote by $c_{k^*_2}^r \in {\rm{I}}_{c_{k^*}^r,T}^r$ the first right-expanding point where this occurs  and let $$b_2 = {\rm argmax}_{c_{k^*}^r\leq t < c_{k^*_2}^r}L\left(\left|\boldsymbol{\tilde{B}_{c_{k^*}^r,c_{k^*_2}^r}^{t}}\right|\right)$$ with $L\left(\left|\boldsymbol{\tilde{B}_{c_{k^*}^r,c_{k^*_2}^r}^{b_2}}\right|\right) > \zeta_T$ with probability tending to 1 as $T \to \infty$.

It is straightforward to show that $\left|b_2 - r_2\right|/\gamma_{2,T}^2 = \mathcal{O}_{{\rm p}}\left(1\right)$ following exactly the same process as in Step 1.2 and we will not repeat this here. Having detected $r_2$, then our algorithm will proceed in the interval $[s,e]=[c_{k^*_2}^r, T]$ and all the change-points will get detected one by one since Step 1.3 will be applicable as long as there are previously undetected change-points in $[s,e]$. Denoting by $\hat{r}_j$ the estimation of $r_j$ as we did in the statement of the theorem, then we conclude that all change-points will get detected one by one and
\begin{equation}
\nonumber \left|\hat{r}_j - r_j\right|/\gamma_{j,T}^2 = \mathcal{O}_{{\rm p}}\left(1\right).
\end{equation}
due to the result in \eqref{main_Result_consistency}.\\
{\textbf{Step 2:}} The arguments given in Steps 1.1-1.3 hold in the set $A_T^*$ (which also implies $A_T$) defined in \eqref{A_T}. At the beginning of the algorithm, $s=1, e=T$ and for $N\geq 1$, there exist $k_1\in \left\lbrace 1,\ldots, K \right\rbrace$ such that $s_{k_1} = s, e_{k_1} \in I_1^R$ and $k_2\in \left\lbrace 1,\ldots, K \right\rbrace$ such that $s_{k_2} \in I_N^L, e_{k_2} = e$. As in our previous steps, w.l.o.g. assume that $r_1 \leq T - r_N$, meaning that $r_1$ gets isolated and detected first in an interval $[s,c_{k^*}^r]$, where $c_{k^*}^r \in {\rm{I}}_{1,T}^r$ and it is less than or equal to $e_{k_1}$. Then, $\hat{r}_1 = {\rm argmax}_{s \leq t < c_{k^*}^r}L\left(\left|\boldsymbol{\tilde{B}_{s,c_{k^*}^r}^{t}}\right|\right)$ and $\hat{r}_1$ is the estimated location for $r_1$ and $\left|r_1-\hat{r}_1\right|/\gamma_{1,T}^2 = \mathcal{O}_{{\rm p}}\left(1\right)$. After this, the algorithm continues in $[c_{k^*}^r,T]$ and keeps detecting all the change-points as explained in Step 1. It is important to note that there will not be any double detection issues because naturally, at each step of the algorithm, the new interval $[s,e]$ does not include any previously detected change-points.

Once all the change-points have been detected one by one, then $[s,e]$ will have no other change-points in it. Our method will keep interchangeably checking for possible change-points in intervals of the form $\left[s,c_{\tilde{k}_1}^r\right]$ and $\left[c_{\tilde{k}_2}^l,e\right]$ for $c_{\tilde{k}_1}^r \in {\rm{I}}_{s,e}^r$ and $c_{\tilde{k}_2}^l \in {\rm{I}}_{s,e}^l$. Allow us to denote by $[s^*,e^*]$ any of these intervals. Our algorithm will not detect anything in $[s^*,e^*]$ since $\forall i \in \left\lbrace 1,\ldots, T \right\rbrace$ and $\forall b \in [s^*,e^*)$,
\begin{equation}
\nonumber L\left(\left|\boldsymbol{\tilde{B}_{s^*,e^*}^{b}}\right|\right) \leq L\left(\left|\boldsymbol{\tilde{F}_{s^*,e^*}^{b}}\right|\right) + 4\sqrt{\log T} = 4\sqrt{\log T} < C_1\sqrt{\log T}\leq \zeta_T.
\end{equation}
After not detecting anything in all intervals of the above form, then the algorithm concludes that there are not any change-points in $[s,e]$ and stops.$\hfill\square$
\bibliographystyle{abbrv}

\endgroup
\end{document}